\newtheorem{Theorem}[equation]{Theorem}
\newtheorem{Corollary}[equation]{Corollary}
\newtheorem{Lemma}[equation]{Lemma}
\newtheorem{Proposition}[equation]{Proposition}
\theoremstyle{definition}
\newtheorem{Definition}[equation]{Definition}
\theoremstyle{remark}
\newtheorem{Remark}[equation]{Remark}
\numberwithin{equation}{section}
\newtheorem{Claim}[equation]{Claim}
\DeclareMathOperator{\End}{End}
\DeclareMathOperator{\ev}{ev}
\DeclareMathOperator{\id}{id}
\DeclareMathOperator{\ad}{ad}
\DeclareMathOperator{\str}{str}
\newcommand{\plim}[1][]{\mathop{\varprojlim}\limits_{#1}}
\newcommand{\ve}{\varepsilon}
\newcommand{\tss}{\hspace{1pt}}
\begin{document}
\title{Affine Super Yangians and Rectangular $W$-superalgebras}
\author{Mamoru Ueda\thanks{udmamoru@kurims.kyoto-u.ac.jp}}
\affil{Research Institute for Mathematical Sciences, Kyoto University, Kyoto 606-8502, Japan}
\date{}
\maketitle
\begin{abstract}
Motivated by the AGT conjecture, we construct a homomorphism from the affine super Yangian $Y_{\ve_1,\ve_2}(\widehat{\mathfrak{sl}}(m|n))$ to the universal enveloping algebra of the rectangular $W$-superalgebra $\mathcal{W}^{k}(\mathfrak{gl}(ml|nl),(l^{(m|n)}))$ for all $m\neq n,m,n\geq2$ or $m\geq3,n=0$. Furthermore, we show that the image of this homomorphism is dense provided that $k+(m-n)(l-1)\neq0$.
\end{abstract}
\section{Introduction}
A $W$-algebra $\mathcal{W}^k(\mathfrak{g},f)$ is a vertex algebra associated with a finite dimensional reductive Lie algebra $\mathfrak{g}$ and a nilpotent element $f\in\mathfrak{g}$. It appeared in the study of two dimensional conformal field theories (\cite{Z}) and has been studied by both physicists and mathematicians since 1980's. When $\mathfrak{g}$ is $\mathfrak{sl}_2$ and $f$ is a nonzero nilpotent element of $\mathfrak{g}$, $\mathcal{W}^k(\mathfrak{g},f)$ is nothing but the Virasoro algebra. However, when $\mathfrak{g}$ is a general finite dimensional reductive Lie algebra, it is no longer a Lie algebra and a presentation by generators and relations is not known in general. On the other hand, this problem has been solved (\cite{BK}, \cite{SKV}) in the case for the finite $W$-algebras (\cite{Pr}). A finite $W$-algebra $\mathcal{W}^{\text{fin}}(\mathfrak{g},f)$ is an associative algebra associated with a finite dimensional reductive Lie algebra $\mathfrak{g}$ and a nilpotent element $f\in\mathfrak{g}$ and it can be regarded as a finite analogue of $W$-algebra $\mathcal{W}^k(\mathfrak{g},f)$ (\cite{DSK1}, \cite{A1}). In \cite{BK}, Brundan and Kleshchev resolved the problem by using a relationship between Yangians of type $A$ and finite $W$-algebras of type $A$.

The Yangian is a quantum group which is a deformation of the current algebra $\mathfrak{g}\otimes\mathbb{C}[z]$. Drinfeld (\cite{D1}, \cite{D2}) introduced a Yangian associated with a finite dimensional simple Lie algebra $\mathfrak{g}$ in order to solve the Yang-Baxter equation. The Yangian of type $A$ has several presentations; the RTT presentation, the parabolic presentation, the Drinfeld presentation and the Drinfeld $J$ presentation. It was shown in \cite{RS} that there exist surjective homomorphisms from Yangians of type $A$ to finite rectangular $W$-algebras of type $A$. The homomorphism is given by the Drinfeld $J$ presentation. More generally, Brundan and Kleshchev (\cite{BK}) constructed a surjective homomorphism from a shifted Yangian, a subalgebra of the Yangian of type $A$, to an arbitrary finite $W$-algebra of type $A$ by using the parabolic presentation. Moreover, the defining relations of finite $W$-algebras of type $A$ have been written down explicitly as a quotient of shifted Yangians in \cite{BK}.

Similar results are known in the super setting. We can define the $W$-superalgebras and finite $W$-superalgebras, which are attached with finite dimensional reductive Lie superalgebras $\mathfrak{g}$ and nilpotent elements of $\mathfrak{g}$ in the even parity. In the case of the Lie superalgebra $\mathfrak{sl}(m|n)$, the corresponding Yangian in the Drinfeld presentation was first introduced by Stukopin (\cite{S}, see also \cite{G}). It is called the super Yangian. A relationship between super Yangians and finite $W$-superalgebras was constructed by Briot and Ragoucy \cite{BR} for the rectangular case and by Peng \cite{Pe} for more general case.

It is natural to ask whether there exists a similar result in the affine setting. The definition of Yangian naturally extends to the case that $\mathfrak{g}$ is a Kac-Moody Lie algebra in the Drinfeld presentation. In the case that $\mathfrak{g}$ is an affine Lie algebra, it is a deformation of the universal enveloping algebra of the current algebra of $\mathfrak{g}$ (see \cite{GNW}, \cite{BL}, and \cite{U1}). Unfortunately, the affine Yangian does not have the Drinfeld $J$ presentation nor the parabolic presentation. Thus, we cannot construct the relationship between $W$-algebras and the affine Yangians in a similar way as \cite{RS} or \cite{BK}. A breakthrough was given by Schiffman and Vasserot (\cite{SV}), who have constructed, using a geometric realization of the Yangian, a surjective homomorphism from the Yangian of $\widehat{\mathfrak{gl}}(1)$ to the universal enveloping algebras of the principal $W$-algebras of type $A$ and have proved the celebrated AGT conjecture (\cite{Ga}, \cite{BFFR}). Gaberdiel, Li, Peng and Zhang (\cite{GLPZ}) defined the Yangian for the affine Lie superalgebra $\widehat{\mathfrak{gl}}(1|1)$ and obtained a result similar to \cite{SV} in the super setting.

In this article, we give a result similar to the one of \cite{RS} in the affine super setting. The corresponding Yangian is the affine super Yangian, which is the deformation of the universal enveloping algebra of the current algebra of $\widehat{\mathfrak{sl}}(m|n)$ (see \cite{U2}).
We construct a homomorphism from the affne super Yangian $Y_{\ve_1,\ve_2}(\widehat{\mathfrak{sl}}(m|n))$ to the universal enveloping algebra (see \cite{FZ} and \cite{MNT}) of $\mathcal{W}^{k}(\mathfrak{gl}(ml|nl),(l^{(m|n)}))$. The following theorem is the main result of this paper.
\begin{Theorem}\label{t1}
Suppose that $m, n\geq2,m\neq n$ or $m\geq3,n=0$, and assume that $l\geq2$ and
\begin{gather*}
\ve_1=\dfrac{\alpha}{m-n},\quad\ve_2=-1-\dfrac{\alpha}{m-n}.
\end{gather*}
Then, there exists an algebra homomorphism 
\begin{equation*}
\Phi\colon Y_{\ve_1,\ve_2}(\widehat{\mathfrak{sl}}(m|n))\to \mathcal{U}(\mathcal{W}^{k}(\mathfrak{gl}(ml|nl),(l^{(m|n)}))),
\end{equation*} 
where $\mathcal{U}(\mathcal{W}^{k}(\mathfrak{gl}(ml|nl),(l^{(m|n)})))$ is the universal enveloping algebra of $\mathcal{W}^{k}(\mathfrak{gl}(ml|nl),(l^{(m|n)}))$. Moreover, the image of $\Phi$ is dense in $\mathcal{U}(\mathcal{W}^{k}(\mathfrak{gl}(ml|nl),(l^{(m|n)})))$ provided that $\alpha\neq0$.
\end{Theorem}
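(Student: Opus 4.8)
My plan is to work entirely through the Drinfeld presentation of the affine super Yangian. Recall that $Y_{\ve_1,\ve_2}(\widehat{\mathfrak{sl}}(m|n))$ is generated by elements $x_{i,r}^{\pm}$ and $h_{i,r}$ with $i\in\mathbb{Z}/(m+n)\mathbb{Z}$ and $r\in\mathbb{Z}_{\geq0}$, subject to the (super-signed) Drinfeld relations. Since the algebra is already generated by the finitely many low-degree elements $x_{i,0}^{\pm}$, $h_{i,0}$ and $h_{i,1}$, it suffices to assign images to these and to verify the reduced list of relations among them. On the target side, the first step is to package the generating fields of $\mathcal{W}^{k}(\mathfrak{gl}(ml|nl),(l^{(m|n)}))$ into a rectangular family of matrix-valued currents $J^{(1)}(z),\dots,J^{(l)}(z)$ indexed by the $l$ blocks of the partition $(l^{(m|n)})$ (this is exactly why the hypothesis $l\geq2$ is essential: for $l=1$ the nilpotent is trivial and there is no room for the higher currents that carry the Yangian structure). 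The ``top'' current $J^{(1)}$ generates an embedded copy of $\widehat{\mathfrak{sl}}(m|n)$, at a suitably shifted level, inside $\mathcal{U}(\mathcal{W}^{k}(\mathfrak{gl}(ml|nl),(l^{(m|n)})))$.

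Second, I would define $\Phi$ on the chosen generators. The degree-zero Chevalley elements $x_{i,0}^{\pm}$ and $h_{i,0}$ are sent to the corresponding modes of the embedded $\widehat{\mathfrak{sl}}(m|n)$-currents built from $J^{(1)}$. The degree-one elements $h_{i,1}$ (and, if convenient, $x_{i,1}^{\pm}$) are sent to explicit expressions combining the modes of the next current $J^{(2)}$ with normal-ordered quadratic corrections in the modes of $J^{(1)}$. The coefficients of these corrections are precisely where the parametrization $\ve_1=\alpha/(m-n)$, $\ve_2=-1-\alpha/(m-n)$ enters: it encodes the level $k$ together with the shift $(m-n)(l-1)$ coming from the rectangular nilpotent, and the denominator $m-n$ forces the standing hypothesis $m\neq n$ so that $\ve_1,\ve_2$ are well defined.

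Third, I would verify the defining relations on these generators. The relations $[h_{i,r},h_{j,s}]=0$, the $h$--$x$ compatibility relations, and the relations $[x_{i,r}^{+},x_{j,s}^{-}]=\delta_{ij}h_{i,r+s}$ reduce to identities among the operator product expansions of the currents $J^{(s)}$, which should follow from the quadratic, $W_{1+\infty}$-type commutation relations of the rectangular $W$-superalgebra after substituting the chosen values of $\ve_1,\ve_2$. This relation-checking is where I expect the main obstacle to lie: the degree-one relations (which pin down $\Phi(h_{i,1})$ and $\Phi(x_{i,1}^{\pm})$ through the quadratic corrections) and the super-Serre relations require careful coefficient matching, and the sign bookkeeping of the super setting together with the separate treatment of the affine node $i=0$ (which is not on equal footing with the finite nodes in the explicit current realization) makes this the delicate part of the argument.

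Finally, for density I would show that the modes of every generating field $J^{(s)}$ lie in the closure of the image of $\Phi$. The modes coming from $J^{(1)}$ are in the image by construction; from $\Phi(h_{i,1})$ one recovers the modes of $J^{(2)}$ modulo quadratic terms already in the image, and iterating with the higher elements $h_{i,r}$ produces the modes of $J^{(3)},\dots,J^{(l)}$ in turn. The hypothesis $\alpha\neq0$, equivalently $k+(m-n)(l-1)\neq0$, guarantees that the leading coefficient in each such extraction is invertible, so the recovery succeeds and the image is dense in the topology defined by the mode filtration on $\mathcal{U}(\mathcal{W}^{k}(\mathfrak{gl}(ml|nl),(l^{(m|n)})))$; at $\alpha=0$ this leading coefficient degenerates and the argument breaks down, which is consistent with the stated restriction.
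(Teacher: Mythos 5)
Your outline reproduces the skeleton of the paper's actual proof: reduce to the presentation with generators in degrees $r=0,1$, send the degree-zero generators to modes of the affine current $W^{(1)}$ embedded in $\mathcal{U}(\mathcal{W}^{k}(\mathfrak{gl}(ml|nl),(l^{(m|n)})))$, send the degree-one generators to modes of $W^{(2)}$ plus quadratic corrections in $W^{(1)}$-modes, and use $\alpha\neq0$ for density. But the central step --- verifying the Yangian relations --- is only asserted (``should follow \dots careful coefficient matching''), and this is exactly where the content of the theorem lies. The paper's mechanism is specific: from $\Phi$ one strips off the $W^{(2)}$-terms to obtain a map $\widetilde{\ev}$, and since the $W^{(1)}$-modes span a copy of $\widehat{\mathfrak{gl}}(m|n)^\kappa$ (Lemma~\ref{Lem1}), $\widetilde{\ev}$ is precisely the known evaluation-map formula of the affine super Yangian at the \emph{rescaled} parameters $\tilde{\ve}_1=l\alpha/(m-n)$, $\tilde{\ve}_2=-1-l\alpha/(m-n)$; hence its compatibility with most relations is inherited (Lemma~\ref{Claim1211}), and each relation for $\Phi$ splits into an $\widetilde{\ev}$-part plus brackets involving $W^{(2)}$, which are evaluated by the OPE computations (Corollary~\ref{COR}, Lemma~\ref{Lem4}). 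Crucially, $\widetilde{\ev}$ is \emph{not} a homomorphism: $[\widetilde{\ev}(H_{i,1}),\widetilde{\ev}(H_{j,1})]\neq0$ because the cocycle of $\widehat{\mathfrak{gl}}(m|n)^\kappa$ differs from that of $\widehat{\mathfrak{gl}}(m|n)^{\mathrm{str}}$, and this defect must cancel against the $W^{(2)}$--$W^{(2)}$ OPE contributions, which works only with the normalization $c=0$ (Claim~\ref{Claim1236}). Nothing in your plan anticipates this cancellation; without it, or an equivalent device, the term-by-term verification you envision does not close, so the existence part of your argument has a genuine gap rather than merely an omitted computation.

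The density step has a second gap. You propose to recover the modes of the higher currents $W^{(3)},\dots,W^{(l)}$ iteratively from the images of the higher Yangian generators $h_{i,r}$, $r\geq2$, but you give no reason this extraction succeeds: those images are nested commutators of the degree-one images, not expressions of the form ``$W^{(r+1)}$ plus terms already recovered,'' and no such formula is available. The paper avoids this entirely by proving (Theorem~\ref{Tinf}, the appendix) that for $\alpha\neq0$ the rectangular $W$-superalgebra is already generated by $W^{(1)}_{i,j}$ and $W^{(2)}_{i,j}$ alone; density therefore only requires recovering the modes of these two families, which is where the $\alpha$-invertibility argument you describe is actually used. As written, your density argument assumes a statement at least as strong as this generation theorem without proving it. (A minor point: $l\geq2$ is not needed because ``for $l=1$ there is no room for higher currents'' --- for $l=1$ the statement reduces to the known evaluation map; the hypothesis is there so that the $W^{(2)}$ currents exist and the formulas defining $\Phi$ make sense.)
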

By Theorem~\ref{t1}, provided that $\alpha\neq0$, any irreducible representation of $\mathcal{W}^{k}(\mathfrak{gl}(ml|nl),(l^{(m|n)}))$ can be seen as an irreducible representation of $Y_{\ve_1,\ve_2}(\widehat{\mathfrak{sl}}(m|n))$. In the case that $l=1$, the corresponding result was previously shown in \cite{Gu1}, \cite{K1}, \cite{K0}, \cite{K2}, \cite{U2} and \cite{U3}. 

We expect that the above result will be useful for studying the AGT correspondence for parabolic sheaves. Precisely speaking, Feigin-Finkelberg-Negut-Rybnikov \cite{FFNR} constructed an action of the Guay's affine Yangian on the equivariant cohomology for the affine Laumon spaces. Showing that the kernel of $\Phi$ acts trivially on the equivariant cohomology for the affine Laumon spaces, we obtain the action of the rectangular $W$-algebra on the equivariant cohomology for the affine Laumon spaces. See \cite{N} for the corresponding result in the quantum toroidal setting. 

This paper is organized as follows. In Section 2, we recall the definition of the affine super Yangian and its evaluation map. In Section 3, we recall the definition of the rectangular $W$-superalgebras of type $A$ and construct the elements $W^{(1)}_{i,j}$ and $W^{(2)}_{i,j}$ which are in fact generators of the rectangular $W$-superalgebra of type $A$. In Section 4, we compute OPEs, which are needed for the construction of $\Phi$. In Section 5, we construct an algebra homomorphism from the affine super Yangian to the universal enveloping algebra of the $W$-superalgebras of type $A$. The appendix is devoted to the proof of the fact that $W^{(1)}_{i,j}$ and $W^{(2)}_{i,j}$ generate the rectangular $W$-superalgebra. 

\section{Affine Super Yangians}
First, we recall the definition of the affine super Yangian (see \cite{U2} Definition~3.1). In this paper, we set $\{x,y\}$ as $xy+yx$. We also fix $m,n\in\mathbb{Z}_{\geq0}$ and set the following notation;
\begin{gather*}
 p(i)=\begin{cases}
 0&(1\leq i\leq m),\\
 1&(m+1\leq i\leq m+n).
 \end{cases}
 \end{gather*}
\begin{Definition}\label{Def}
Suppose that $m, n\geq2$ and $m\neq n$. The affine super Yangian $Y_{\ve_1,\ve_2}(\widehat{\mathfrak{sl}}(m|n))$ is the associative superalgebra over $\mathbb{C}$ generated by $x_{i,r}^{+}, x_{i,r}^{-}, h_{i,r}$ $(i \in \{0,1,\cdots, m+n-1\}, r \in \mathbb{Z}_{\geq 0})$ with two parameters $\ve_1, \ve_2 \in \mathbb{C}$ subject to the following defining relations:
\begin{gather}
	[h_{i,r}, h_{j,s}] = 0, \label{eq1.1}\\
	[x_{i,r}^{+}, x_{j,s}^{-}] = \delta_{ij} h_{i, r+s}, \label{eq1.2}\\
	[h_{i,0}, x_{j,r}^{\pm}] = \pm a_{ij} x_{j,r}^{\pm},\label{eq1.3}\\
	[h_{i, r+1}, x_{j, s}^{\pm}] - [h_{i, r}, x_{j, s+1}^{\pm}] 
	= \pm a_{ij} \dfrac{\varepsilon_1 + \varepsilon_2}{2} \{h_{i, r}, x_{j, s}^{\pm}\} 
	- m_{ij} \dfrac{\varepsilon_1 - \varepsilon_2}{2} [h_{i, r}, x_{j, s}^{\pm}],\label{eq1.4}\\
	[x_{i, r+1}^{\pm}, x_{j, s}^{\pm}] - [x_{i, r}^{\pm}, x_{j, s+1}^{\pm}] 
	= \pm a_{ij}\dfrac{\varepsilon_1 + \varepsilon_2}{2} \{x_{i, r}^{\pm}, x_{j, s}^{\pm}\} 
	- m_{ij} \dfrac{\varepsilon_1 - \varepsilon_2}{2} [x_{i, r}^{\pm}, x_{j, s}^{\pm}],\label{eq1.5}\\
	\sum_{w \in \mathfrak{S}_{1 + |a_{ij}|}}[x_{i,r_{w(1)}}^{\pm}, [x_{i,r_{w(2)}}^{\pm}, \dots, [x_{i,r_{w(1 + |a_{ij}|)}}^{\pm}, x_{j,s}^{\pm}]\dots]] = 0\  (i \neq j),\label{eq1.6}\\
	[x^\pm_{i,r},x^\pm_{i,s}]=0\ (i=0, m),\label{eq1.7}\\
	[[x^\pm_{i-1,r},x^\pm_{i,0}],[x^\pm_{i,0},x^\pm_{i+1,s}]]=0\ (i=0, m),\label{eq1.8}
\end{gather}
where\begin{gather*}
a_{ij} =
	\begin{cases}
	{(-1)}^{p(i)}+{(-1)}^{p(i+1)}  &\text{if } i=j, \\
	         -{(-1)}^{p(i+1)}&\text{if }j=i+1,\\
	         -{(-1)}^{p(i)}&\text{if }j=i-1,\\
	        1 &\text{if }(i,j)=(0,m+n-1),(m+n-1,0),\\
		0  &\text{otherwise,}
	\end{cases}\\
	 m_{i,j}=
	\begin{cases}
	-{(-1)}^{p(i+1)} &\text{if } i=j + 1,\\
		{(-1)}^{p(i)} &\text{if } i=j - 1,\\
	        -1 &\text{if }(i,j)=(0,m+n-1),\\
	        1 &\text{if }(i,j)=(m+n-1,0),\\
		0  &\text{otherwise,}
	\end{cases}
\end{gather*}
and the generators $x^\pm_{m, r}$ and $x^\pm_{0, r}$ are odd and all other generators are even and we define $x^\pm_{-1,0}$ as $x^\pm_{m+n-1,0}$.
\end{Definition}

Note that in Definition~\ref{Def} the number of generators of the affine super Yangian is infinite. It is possible to give a presentation of the affine super Yangian such that the number of generators is finite, as we explain below. 

First, we show that $Y_{\ve_1,\ve_2}(\widehat{\mathfrak{sl}}(m|n))$ is generated by $x_{i,r}^{+}, x_{i,r}^{-}, h_{i,r}$ $(i\in\{0,1,\cdots,m+n-1\}, r = 0,1)$. Let $\tilde{h}_{i,1}$ be ${h}_{i,1} - \dfrac{\ve_1 + \ve_2}{2} h_{i,0}^2$. When $r=0$, the relation \eqref{eq1.4} is equivalent to
\begin{equation}
[\tilde{h}_{i,1}, x_{j,s}^{\pm}] = \pm a_{ij}\left(x_{j,s+1}^{\pm}-m_{ij}\dfrac{\varepsilon_1 - \varepsilon_2}{2} x_{j, s}^{\pm}\right).\label{11111}
\end{equation}
By \eqref{11111} and \eqref{eq1.2}, we have the following two relations for all $r\geq1$;
\begin{gather}
x^\pm_{i,r+1}=\pm\dfrac{1}{a_{i,i}}[\tilde{h}_{i,1},x^\pm_{i,r}],\  h_{i,r+1}=[x^+_{i,r+1},x^-_{i,0}]\quad\text{if}\quad i\neq m,0,\label{eq1297}\\
x^\pm_{i,r+1}=\pm\dfrac{1}{a_{i+1,i}}[\tilde{h}_{i+1,1},x^\pm_{i,r}]+m_{i+1,i}\dfrac{\varepsilon_1 - \varepsilon_2}{2} x_{i, r}^{\pm},\   h_{i,r+1}=[x^+_{i,r+1},x^-_{i,0}]\quad\text{if}\quad i=m,0.\label{eq1298}
\end{gather}
Thus, using \eqref{eq1297} and \eqref{eq1298}, $\{h_{i,r},x^\pm_{i,r}\mid i\in\{0,1,\cdots,m+n-1\},\ r\geq2\}$ are generated inductively by $\{h_{i,r},x^\pm_{i,r}\mid i\in\{0,1,\cdots,m+n-1\},\ r=0,1\}$. The following theorem describes the presentation of the affine super Yangian $Y_{\ve_1,\ve_2}(\widehat{\mathfrak{sl}}(m|n))$ whose generators are $x_{i,r}^{+}, x_{i,r}^{-}, h_{i,r}$ $(i \in \{0,1,\cdots, m+n-1\}, r = 0,1)$. 
\begin{Theorem}[Ueda~\cite{U2}, Theorem~3.13]\label{Mini}
Suppose that $m, n\geq2$ and $m\neq n$. The affine super Yangian $Y_{\ve_1,\ve_2}(\widehat{\mathfrak{sl}}(m|n))$ is isomorphic to the associative superalgebra generated by $x_{i,r}^{+}, x_{i,r}^{-}, h_{i,r}$ $(i \in \{0,1,\cdots, m+n-1\}, r = 0,1)$ subject to the following defining relations:
\begin{gather}
[h_{i,r}, h_{j,s}] = 0,\label{eq2.1}\\
[x_{i,0}^{+}, x_{j,0}^{-}] = \delta_{ij} h_{i, 0},\label{eq2.2}\\
[x_{i,1}^{+}, x_{j,0}^{-}] = \delta_{ij} h_{i, 1} = [x_{i,0}^{+}, x_{j,1}^{-}],\label{eq2.3}\\
[h_{i,0}, x_{j,r}^{\pm}] = \pm a_{ij} x_{j,r}^{\pm},\label{eq2.4}\\
[\tilde{h}_{i,1}, x_{j,0}^{\pm}] = \pm a_{ij}\left(x_{j,1}^{\pm}-m_{ij}\dfrac{\varepsilon_1 - \varepsilon_2}{2} x_{j, 0}^{\pm}\right),\label{eq2.5}\\
[x_{i, 1}^{\pm}, x_{j, 0}^{\pm}] - [x_{i, 0}^{\pm}, x_{j, 1}^{\pm}] = \pm a_{ij}\dfrac{\varepsilon_1 + \varepsilon_2}{2} \{x_{i, 0}^{\pm}, x_{j, 0}^{\pm}\} - m_{ij} \dfrac{\varepsilon_1 - \varepsilon_2}{2} [x_{i, 0}^{\pm}, x_{j, 0}^{\pm}],\label{eq2.6}\\
(\ad x_{i,0}^{\pm})^{1+|a_{ij}|} (x_{j,0}^{\pm})= 0 \ (i \neq j), \label{eq2.7}\\
[x^\pm_{i,0},x^\pm_{i,0}]=0\ (i=0, m),\label{eq2.8}\\
	[[x^\pm_{i-1,0},x^\pm_{i,0}],[x^\pm_{i,0},x^\pm_{i+1,0}]]=0\ (i=0, m),\label{eq2.9}
\end{gather}
where the generators $x^\pm_{m, r}$ and $x^\pm_{0, r}$ are odd and all other generators are even and we define $x^\pm_{-1,0}$ as $x^\pm_{m+n-1,0}$.
\end{Theorem}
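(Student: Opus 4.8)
The plan is to prove the isomorphism by producing homomorphisms in both directions and checking that they are mutually inverse. Write $\widetilde{Y}$ for the associative superalgebra presented by the generators $x^{\pm}_{i,r},h_{i,r}$ with $r\in\{0,1\}$ and relations \eqref{eq2.1}--\eqref{eq2.9}, and $Y=Y_{\ve_1,\ve_2}(\widehat{\mathfrak{sl}}(m|n))$ for the algebra of Definition~\ref{Def}. First I would construct a surjection $f\colon\widetilde{Y}\twoheadrightarrow Y$. Each defining relation of $\widetilde{Y}$ holds in $Y$: relations \eqref{eq2.1}--\eqref{eq2.4} and \eqref{eq2.6}--\eqref{eq2.9} are the specializations of \eqref{eq1.1}--\eqref{eq1.3} and \eqref{eq1.5}--\eqref{eq1.8} to $r,s\in\{0,1\}$, while \eqref{eq2.5} is exactly \eqref{11111}, the $r=0$ instance of \eqref{eq1.4}. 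Hence sending each generator of $\widetilde{Y}$ to the like-named element of $Y$ extends to an algebra homomorphism, and it is surjective because the discussion preceding the theorem shows, via \eqref{eq1297} and \eqref{eq1298}, that $Y$ is generated by its elements of degree $r\leq 1$.

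The substance of the proof is the reverse homomorphism $g\colon Y\to\widetilde{Y}$. I would define the images of the higher generators inside $\widetilde{Y}$ by the very same recursions: for $r\geq 1$ set $g(x^{\pm}_{i,r+1})$ and $g(h_{i,r+1})$ using $\tilde{h}_{i,1}$ according to \eqref{eq1297} and \eqref{eq1298}, and then verify that the complete list of defining relations \eqref{eq1.1}--\eqref{eq1.8} holds in $\widetilde{Y}$ for all $r,s\geq 0$ under this assignment. Granting this, $f$ and $g$ are inverse to one another: each composite fixes the degree-$\leq 1$ generators (and $g$, by construction, commutes with the recursions that produce the higher generators), so both composites are the identity and $Y\cong\widetilde{Y}$.

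The heart of the matter is thus the inductive verification of \eqref{eq1.1}--\eqref{eq1.8} in $\widetilde{Y}$. The guiding principle is that $\ad\tilde{h}_{i,1}$ acts as a degree-raising operator: by the definitions \eqref{eq1297}, \eqref{eq1298} the bracket with $\tilde{h}_{i,1}$ increases the index $r$ by one up to lower-order corrections, so every higher element is reached from the degree-$\leq 1$ ones by iterated brackets with the $\tilde{h}_{i,1}$. I would first establish the recursions \eqref{eq1.4} and \eqref{eq1.5} for all $r,s$ by induction on $r$ (resp. on $r+s$), with base cases $r=0$ supplied by \eqref{eq2.5} and \eqref{eq2.6} and with each inductive step stripping off one factor of $\ad\tilde{h}_{i,1}$ by means of the super Jacobi identity. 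With \eqref{eq1.4}, \eqref{eq1.5} available, the relation \eqref{eq1.1} and the mixed relation \eqref{eq1.2} follow by induction on $r+s$, since $h_{i,r+s}$ is by definition $[x^{+}_{i,r+s},x^{-}_{i,0}]$ and the bracket $[x^{+}_{i,r},x^{-}_{j,s}]$ may be transported along the recursion. Finally the Serre-type relations \eqref{eq1.6}--\eqref{eq1.8} in arbitrary degree are deduced from their base cases \eqref{eq2.7}--\eqref{eq2.9} by repeatedly commuting with $\tilde{h}_{i,1}$ and invoking the relations already proved to control the resulting correction terms.

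I expect the main obstacle to be precisely this propagation of the Serre relations, and in particular of the quartic relation \eqref{eq1.8} at the isotropic nodes $i=0,m$, to all $r$: commuting with $\tilde{h}_{i,1}$ generates lower-order terms weighted by $m_{ij}$ and $\tfrac{\ve_1-\ve_2}{2}$, and verifying that the full expression still vanishes requires careful bookkeeping of these corrections. The super structure compounds the difficulty, since the odd generators $x^{\pm}_{0,r},x^{\pm}_{m,r}$ and the occurrence of the anticommutators $\{\cdot,\cdot\}$ rather than $[\cdot,\cdot]$ in \eqref{eq1.4} and \eqref{eq1.5} force every application of the Jacobi identity to be taken in its graded form; I would therefore arrange the induction so that each parity is tracked explicitly and each sign is accounted for at the moment the bracket is expanded.
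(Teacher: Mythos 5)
This theorem is not proved in the present paper --- it is quoted from \cite{U2}, Theorem~3.13 --- and your plan coincides with the strategy of that cited proof: the easy surjection from the finitely presented algebra (all truncated relations being specializations of \eqref{eq1.1}--\eqref{eq1.8}), an inverse homomorphism defined by the recursions \eqref{eq1297}--\eqref{eq1298}, and an inductive verification that \eqref{eq1.1}--\eqref{eq1.8} hold for the recursively defined elements, with $\ad\tilde{h}_{i,1}$ (or $\ad\tilde{h}_{i+1,1}$ at the nodes $i=0,m$, where $a_{ii}=0$ makes \eqref{eq1297} unusable) serving as the degree-raising operator. Be aware, though, that your write-up grants rather than executes that inductive verification, and this verification --- in particular the propagation of \eqref{eq1.4}, \eqref{eq1.5} to all $r,s$ and of the Serre-type relations \eqref{eq1.6}--\eqref{eq1.8} with their $m_{ij}$- and parity-corrections --- is where essentially all of the content of the cited proof lies.
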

There exists another presentation of the affine super Yangian. 
\begin{Proposition}\label{Prop32}
Suppose that $m, n\geq2$ and $m\neq n$. The affine super Yangian $Y_{\ve_1,\ve_2}(\widehat{\mathfrak{sl}}(m|n))$ is isomorphic to the associative superalgebra generated by $X_{i,r}^{+}, X_{i,r}^{-}, H_{i,r}$ $(i \in \{0,1,\cdots, m+n-1\}, r = 0,1)$ subject to the following defining relations:
\begin{gather}
[H_{i,r}, H_{j,s}] = 0,\label{Eq2.1}\\
[X_{i,0}^{+}, X_{j,0}^{-}] = \delta_{ij} H_{i, 0},\label{Eq2.2}\\
[X_{i,1}^{+}, X_{j,0}^{-}] = \delta_{ij} H_{i, 1} = [X_{i,0}^{+}, X_{j,1}^{-}],\label{Eq2.3}\\
[H_{i,0}, X_{j,r}^{\pm}] = \pm a_{ij} X_{j,r}^{\pm},\label{Eq2.4}\\
[\tilde{H}_{i,1}, X_{j,0}^{\pm}] = \pm a_{ij}\left(X_{j,1}^{\pm}\right),\text{ if }(i,j)\neq(0,m+n-1),(m+n-1,0),\label{Eq2.5}\\
[\tilde{H}_{0,1}, X_{m+n-1,0}^{\pm}] = \mp{(-1)}^{p(m+n)} \left(X_{m+n-1,1}^{\pm}-(\ve+\dfrac{m-n}{2}\hbar) X_{m+n-1, 0}^{\pm}\right),\label{Eq2.6}\\
[\tilde{H}_{m+n-1,1}, X_{0,0}^{\pm}] = \mp{(-1)}^{p(m+n)} \left(X_{0,1}^{\pm}+(\ve+\dfrac{m-n}{2}\hbar) X_{0, 0}^{\pm}\right),\label{Eq2.7}\\
[X_{i, 1}^{\pm}, X_{j, 0}^{\pm}] - [X_{i, 0}^{\pm}, X_{j, 1}^{\pm}] = \pm a_{ij}\dfrac{\hbar}{2} \{X_{i, 0}^{\pm}, X_{j, 0}^{\pm}\}\text{ if }(i,j)\neq(0,m+n-1),(m+n-1,0),\label{Eq2.8}\\
[X_{0, 1}^{\pm}, X_{m+n-1, 0}^{\pm}] - [X_{0, 0}^{\pm}, X_{m+n-1, 1}^{\pm}]\qquad\qquad\qquad\qquad\qquad\qquad\nonumber\\
\qquad\qquad\qquad\qquad\qquad\qquad= \pm{(-1)}^{p(m+n)}\dfrac{\hbar}{2} \{X_{0, 0}^{\pm}, X_{m+n-1, 0}^{\pm}\} - (\ve+\dfrac{m-n}{2}\hbar) [X_{0, 0}^{\pm}, X_{m+n-1, 0}^{\pm}],\label{Eq2.9}\\
(\ad X_{i,0}^{\pm})^{1+|a_{ij}|} (X_{j,0}^{\pm})= 0 \ \ (i \neq j), \label{Eq2.10}\\
[X^\pm_{i,0},X^\pm_{i,0}]=0\ (i=0, m),\label{Eq2.11}\\
[[X^\pm_{i-1,0},X^\pm_{i,0}],[X^\pm_{i,0},X^\pm_{i+1,0}]]=0\ (i=0, m),\label{Eq2.12}
\end{gather}
where $\hbar=\ve_1+\ve_2$, $\tilde{H}_{i,1}=H_{i,1}-\dfrac{\hbar}{2}H_{i,0}^2$, $\ve=-(m-n)\ve_2$, the generators $X^\pm_{m, r}$ and $X^\pm_{0, r}$ are odd and all other generators are even and we define $X^\pm_{-1,0}$ as $X^\pm_{m+n-1,0}$.
\end{Proposition}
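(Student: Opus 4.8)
The plan is to prove Proposition~\ref{Prop32} by exhibiting mutually inverse algebra homomorphisms between the algebra $\mathcal{A}$ abstractly presented by \eqref{Eq2.1}--\eqref{Eq2.12} and $Y_{\ve_1,\ve_2}(\widehat{\mathfrak{sl}}(m|n))$ in its minimal presentation \eqref{eq2.1}--\eqref{eq2.9}. Since Theorem~\ref{Mini} has already cut both algebras down to the generators indexed by $r=0,1$, it suffices to define the map on these finitely many generators and check the finite list of relations. Concretely, I would keep the degree-zero generators essentially fixed, $X_{i,0}^\pm\mapsto x_{i,0}^\pm$ and $H_{i,0}\mapsto h_{i,0}$, and modify the degree-one generators by a spectral/gauge shift $X_{i,1}^\pm\mapsto x_{i,1}^\pm+\beta_i x_{i,0}^\pm$, $\tilde{H}_{i,1}\mapsto \tilde{h}_{i,1}+\beta_i h_{i,0}$ at the non-isotropic nodes, the two shifts being forced to agree through \eqref{eq2.3}. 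One then verifies that the images satisfy \eqref{Eq2.1}--\eqref{Eq2.12}.

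The conceptual content is that the shift only redistributes the inhomogeneous ``$m_{ij}\tfrac{\ve_1-\ve_2}{2}$'' terms. On each edge of the finite-type sub-diagram obtained by deleting the affine edge, the relations \eqref{Eq2.5} and \eqref{Eq2.8} force the parameters $\beta_i$ to satisfy a telescoping relation whose increment is governed by $m_{ij}\tfrac{\ve_1-\ve_2}{2}$; solving this along the path $0-1-\cdots-(m+n-1)$ kills all of these terms on the finite part. Because the affine Dynkin diagram is a cycle, the $\beta_i$ cannot be closed up across the remaining affine edge $(0,m+n-1)$, and the residual monodromy that cannot be gauged away must equal exactly the constant $\ve+\tfrac{m-n}{2}\hbar=(m-n)\tfrac{\ve_1-\ve_2}{2}$ appearing in \eqref{Eq2.6}, \eqref{Eq2.7} and \eqref{Eq2.9}; verifying this identity, and in particular getting the accumulated super signs $(-1)^{p(i)}$ to produce the overall factor $m-n$, is the crux of the argument.

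Most of the remaining relations transfer almost immediately. The Cartan relations \eqref{Eq2.1}, \eqref{Eq2.2} and the weight relations \eqref{Eq2.4} hold because the shifted $H_{i,r}$ are still combinations of the mutually commuting $h_{i,r}$ and the shift is invisible to $[h_{i,0},-]$; the relation \eqref{Eq2.3} holds by the very construction tying the shifts of $H_{i,1}$ and $X_{i,1}^\pm$ together; and the Serre relations \eqref{Eq2.10}--\eqref{Eq2.12} involve only the degree-zero generators $X_{i,0}^\pm$ and therefore coincide verbatim with \eqref{eq2.7}--\eqref{eq2.9}. The substantive checks are thus \eqref{Eq2.5}--\eqref{Eq2.9}, split into the non-affine pairs, where the $m_{ij}$-term must vanish, and the affine pair $(0,m+n-1)$, where it must survive carrying the monodromy coefficient with the sign $(-1)^{p(m+n)}$.

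The hard part will be the two isotropic odd nodes $i=0$ and $i=m$. There $a_{ii}=0$, so the diagonal instance of \eqref{Eq2.5} is vacuous and fails to pin down the shift of the odd generators, while the structure constants $m_{ij}$ are not antisymmetric across these nodes; consequently a single uniform scalar shift at each node is over-determined and cannot work naively. The resolution I would pursue is to let the correction at an odd node be induced from its two neighbouring Cartan elements through the finite-generation identities \eqref{eq1297}--\eqref{eq1298}, and to track the super signs carefully enough that the two neighbour-induced definitions of $X_{0,1}^\pm$ and $X_{m,1}^\pm$ agree and feed precisely the right constant into \eqref{Eq2.6}, \eqref{Eq2.7} and \eqref{Eq2.9}. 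Finally, because the entire substitution is unitriangular with respect to the filtration by $r$ (each degree-one generator maps to itself plus degree-zero corrections), the opposite shift furnishes a two-sided inverse; hence the map is an isomorphism and the two presentations describe the same algebra.
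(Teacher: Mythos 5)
Your proposal is correct and is essentially the paper's own proof: the paper establishes the proposition by exactly such a unitriangular gauge shift, namely $\Psi(h_{i,0})=H_{i,0}$, $\Psi(x^{\pm}_{i,0})=X^{\pm}_{i,0}$, $\Psi(h_{0,1})=H_{0,1}$ and $\Psi(h_{i,1})=H_{i,1}-\tfrac{i-2\delta(i>m)(i-m)}{2}(\ve_1-\ve_2)H_{i,0}$ for $i\neq 0$, which is invertible precisely because it is triangular in the sense you describe. The piecewise-linear coefficient $i-2\delta(i>m)(i-m)$ (increasing along the even segment, decreasing past $i=m$) is the closed-form solution of your telescoping recursion --- its change of slope at the isotropic node $i=m$ is exactly how the failure of antisymmetry of $m_{ij}$ there is absorbed, so the ``hard part'' you flag is handled automatically by this choice --- and the residual constant surviving at the affine edge is your monodromy $\ve+\tfrac{m-n}{2}\hbar=(m-n)\tfrac{\ve_1-\ve_2}{2}$.
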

\begin{proof}
The homomorphism $\Psi$ from $Y_{\ve_1,\ve_2}(\widehat{\mathfrak{sl}}(m|n))$ to the superalgebra defined in Proposition~\ref{Prop32} is given by
\begin{gather*}
\Psi(h_{i,0})=H_{i,0},\quad\Psi(x^\pm_{i,0})=X^\pm_{i,0},\\
\Psi(h_{i,1})=\begin{cases}
H_{0,1}&\text{ if $i=0$},\\
H_{i,1}-\dfrac{i-2\delta(i>m)(i-m)}{2}(\ve_1-\ve_2)H_{i,0}&\text{ if $i\neq0$},
\end{cases}
\end{gather*}
where 
\begin{equation*}
\delta(i>m)=\begin{cases}
1&\text{ if }i>m,\\
0&\text{ if }i\leq m.
\end{cases}
\end{equation*}
It is clear that $\Psi$ is an isomorphism.
\end{proof}
By using Theorem~\ref{Mini}, we can construct a non-trivial homomorphism from the affine super Yangian to the completion of the universal enveloping algebra of the affinization of $\mathfrak{gl}(m|n)$. The homomorphism is called as the evaluation map. 
In this paper, we deal with two different affinizations of $\mathfrak{gl}(m|n)$ corresponding to different cocycles. The first one is denoted by $\widehat{\mathfrak{gl}}(m|n)^\text{str}$ and is defined as $\mathfrak{gl}(m|n)\otimes\mathbb{C}[t,t^{-1}]\oplus\mathbb{C}\tilde{c}\oplus\mathbb{C}z$ whose commutator relations are given by
\begin{gather*}
[x\otimes t^u, y\otimes t^v]=\begin{cases}
[x,y]\otimes t^{u+v}+\delta_{u+v,0}u\text{str}(xy)\tilde{c}\ \text{ if }x,y\in\mathfrak{sl}(m|n),\\
[e_{a,b},e_{i,i}]\otimes t^{u+v}+\delta_{u+v,0}u\text{str}(E_{a,b}E_{i,i})\tilde{c}+\delta_{u+v,0}\delta_{a,b}u{(-1)}^{p(a)+p(i)}z\\
\qquad\qquad\qquad\qquad\qquad\qquad\qquad\qquad\qquad\text{ if }x=e_{a,b},\ y=e_{i,i},
\end{cases}\\
\text{$z$ and $\tilde{c}$ are central elements of }\widehat{\mathfrak{gl}}(m|n),
\end{gather*}
where $E_{i,j}\in\mathfrak{gl}(m|n)$ is a matrix unit whose parity is $p(i)+p(j)$ and str is a supertrace of $\mathfrak{gl}(m|n)$, that is, $\text{str}(E_{i,j}E_{k,l})=\delta_{i,l}\delta_{j,k}{(-1)}^{p(i)}$. 
The another one is denoted by $\widehat{\mathfrak{gl}}(m|n)^\kappa$ and is defined as $\mathfrak{gl}(m|n)\otimes\mathbb{C}[t^{\pm1}]\oplus\mathbb{C}\tilde{c}\oplus\mathbb{C}x$ whose commutator relations are
\begin{gather*}
\text{$\tilde{c}$ and $x$ are central elements},\\
[u\otimes t^a,v\otimes t^b]=[u,v]\otimes t^{a+b}+\delta_{a+b,0}a\str(uv)\tilde{c},\text{ if }u\text{ or }v\in\mathfrak{sl}(m|n)\\
[E_{i,i}\otimes t^a,E_{j,j}\otimes t^b]=\delta_{a+b,0}a\str(E_{i,i}E_{j,j})\tilde{c}-\delta_{a+b,0}al(lc-1){(-1)}^{p(i)+p(j)}x.
\end{gather*}
Next, we introduce a completion of $U(\widehat{\mathfrak{gl}}(m|n)^\text{str})/U(\widehat{\mathfrak{gl}}(m|n)^\text{str})(z-1)$ following \cite{MNT} and \cite{GNW}. For all $s\in\mathbb{Z}$, we denote $E_{i,j}\otimes t^s$ by $E_{i,j}(s)$. 
We also set the grading of $U(\widehat{\mathfrak{gl}}(m|n)^\text{str})/U(\widehat{\mathfrak{gl}}(m|n)^\text{str})(z-1)$ as $\text{deg}(X(s))=s$ and $\text{deg}(c)=0$.
Then, $U(\widehat{\mathfrak{gl}}(m|n)^\text{str})/U(\widehat{\mathfrak{gl}}(m|n)^\text{str})(z-1)$ becomes a graded algebra and we denote the set of the degree $d$ elements of $U(\widehat{\mathfrak{gl}}(m|n)^\text{str})/U(\widehat{\mathfrak{gl}}(m|n)^\text{str})(z-1)$ by $U(\widehat{\mathfrak{gl}}(m|n)^\text{str})_d$. 
We obtain the completion
\begin{equation*}
U(\widehat{\mathfrak{gl}}(m|n)^\text{str})_{{\rm comp}}=\bigoplus_{d\in\mathbb{Z}}U(\widehat{\mathfrak{gl}}(m|n)^\text{str})_{{\rm comp},d},
\end{equation*}
where
\begin{equation*}
U(\widehat{\mathfrak{gl}}(m|n)^\text{str})_{d}=\plim[N]U(\widehat{\mathfrak{gl}}(m|n)^\text{str})_{d}/\sum_{r>N}\limits U(\widehat{\mathfrak{gl}}(m|n)^\text{str})_{d-r}U(\widehat{\mathfrak{gl}}(m|n)^\text{str})_{r}.
\end{equation*}

Now, we can define the evaluation map. Let us denote
\begin{gather*}
 \hbar=\ve_1+\ve_2,\quad
\delta(i\leq j)=\begin{cases}
 1&(i\leq j),\\
 0&(i>j),
 \end{cases}\\
h_i=\begin{cases}
{(-1)}^{p(m+n)}E_{m+n,m+n}-E_{1,1}+\tilde{c}&(i=0),\\
{(-1)}^{p(i)}E_{ii}-{(-1)}^{p(i+1)}E_{i+1,i+1}&(1\leq i\leq m+n-1),
\end{cases}\\
x^+_i=\begin{cases}
E_{m+n,1}\otimes t&(i=0),\\
E_{i,i+1}&(\text{otherwise}),
\end{cases}
\quad x^-_i=\begin{cases}
{(-1)}^{p(m+n)}E_{1,m+n}\otimes t^{-1}&(i=0),\\
{(-1)}^{p(i)}E_{i+1,i}&(\text{otherwise}).
\end{cases}
\end{gather*}
\begin{Theorem}[Ueda~\cite{U2}, Proposition~5.2]\label{thm:main}
Set $\tilde{c} =\dfrac{( -m+n) \ve_1}{\hbar}$.
Then, there exists an algebra homomorphism 
\begin{equation*}
\ev_{\ve_1,\ve_2} \colon Y_{\ve_1,\ve_2}(\widehat{\mathfrak{sl}}(m|n)) \to U(\widehat{\mathfrak{gl}}(m|n)^\text{str})_{{\rm comp}}
\end{equation*}
uniquely determined by 
\begin{gather*}
	\ev_{\ve_1,\ve_2}(X_{i,0}^{+}) = x_{i}^{+}, \quad \ev_{\ve_1,\ve_2}(X_{i,0}^{-}) = x_{i}^{-},\quad \ev_{\ve_1,\ve_2}(H_{i,0}) = h_{i},
\end{gather*}
\begin{gather*}
	\ev_{\ve_1,\ve_2}(H_{i,1}) = \begin{cases}
		\hbar\tilde{c}h_{0} -{(-1)}^{p(m+n)} \hbar E_{m+n,m+n} (E_{1,1}-\tilde{c}) \\
		\ +{(-1)}^{p(m+n)}\hbar \displaystyle\sum_{s \geq 0} \limits\displaystyle\sum_{k=1}^{m+n}\limits{(-1)}^{p(k)}E_{m+n,k}(-s) E_{k,m+n}(s)\\
		\quad-\hbar \displaystyle\sum_{s \geq 0} \quad\displaystyle\sum_{k=1}^{m+n}\limits{(-1)}^{p(k)}E_{1,k}(-s-1) E_{k,1}(s+1)\\
		 \qquad\qquad\qquad\qquad\qquad\qquad\qquad\qquad\qquad\qquad\qquad\qquad\qquad\qquad\text{ if $i = 0$},\\
\\
		-\dfrac{(i-2\delta(i\geq m+1)(i-m))}{2}\hbar h_{i} -{(-1)}^{p(E_{i,i+1})} \hbar E_{i,i}E_{i+1,i+1} \\
		\ + \hbar{(-1)}^{p(i)} \displaystyle\sum_{s \geq 0}  \limits\displaystyle\sum_{k=1}^{i}\limits{(-1)}^{p(k)} E_{i,k}(-s) E_{k,i}(s)\\
		\quad +\hbar{(-1)}^{p(i)} \displaystyle\sum_{s \geq 0} \limits\displaystyle\sum_{k=i+1}^{m+n}\limits {(-1)}^{p(k)}E_{i,k}(-s-1) E_{k,i}(s+1) \\
		\qquad -\hbar{(-1)}^{p(i+1)}\displaystyle\sum_{s \geq 0}\limits\displaystyle\sum_{k=1}^{i}\limits{(-1)}^{p(k)}E_{i+1,k}(-s) E_{k,i+1}(s)\\
		 \qquad\quad-\hbar{(-1)}^{p(i+1)}\displaystyle\sum_{s \geq 0}\limits\displaystyle\sum_{k=i+1}^{m+n} \limits{(-1)}^{p(k)}E_{i+1,k}(-s-1) E_{k,i+1}(s+1)\\
		\qquad\qquad\qquad\qquad\qquad\qquad\qquad\qquad\qquad\qquad\qquad\qquad\qquad\qquad \text{ if $i \neq 0$},
	\end{cases}
\end{gather*}
\begin{align*}
\ev_{\ve_1,\ve_2}(X^+_{i,1})&=\begin{cases}
\hbar\tilde{c} x_{0}^{+} + \hbar \displaystyle\sum_{s \geq 0} \limits\displaystyle\sum_{k=1}^{m+n}\limits {(-1)}^{p(k)}E_{m+n,k}(-s) E_{k,1}(s+1)\\
\qquad\qquad\qquad\qquad\qquad\qquad\qquad\qquad\qquad\qquad\qquad\qquad\qquad\qquad \text{ if $i = 0$},\\
-\dfrac{i-2\delta(i\geq m+1)(i-m)}{2}\hbar x_{i}^{+}+ \hbar \displaystyle\sum_{s \geq 0}\limits\displaystyle\sum_{k=1}^i\limits {(-1)}^{p(k)}E_{i,k}(-s) E_{k,i+1}(s)\\
\quad+\hbar \displaystyle\sum_{s \geq 0}\limits\displaystyle\sum_{k=i+1}^{m+n}\limits {(-1)}^{p(k)}E_{i,k}(-s-1) E_{k,i+1}(s+1)\\
\qquad\qquad\qquad\qquad\qquad\qquad\qquad\qquad\qquad\qquad\qquad\qquad\qquad\qquad \text{ if $i \neq 0$},
\end{cases}
\end{align*}
\begin{align*}
\ev_{\ve_1,\ve_2}(X^-_{i,1})&=\begin{cases}
\hbar\tilde{c}x_{0}^{-} +{(-1)}^{p(m+n)} \hbar \displaystyle\sum_{s \geq 0} \limits\displaystyle\sum_{k=1}^{m+n}\limits {(-1)}^{p(k)}E_{1,k}(-s-1) E_{k,m+n}(s),\\
\qquad\qquad\qquad\qquad\qquad\qquad\qquad\qquad\qquad\qquad\qquad\qquad\qquad\qquad \text{ if $i = 0$},\\
-\dfrac{i-2\delta(i\geq m+1)(i-m)}{2}\hbar x_{i}^{-}+ {(-1)}^{p(i)}\hbar \displaystyle\sum_{s \geq 0}\limits\displaystyle\sum_{k=1}^i\limits {(-1)}^{p(k)}E_{i+1,k}(-s) E_{k,i}(s)\\
\qquad+ {(-1)}^{p(i)}\hbar \displaystyle\sum_{s \geq 0}\limits\displaystyle\sum_{k=i+1}^{m+n}\limits {(-1)}^{p(k)}E_{i+1,k}(-s-1) E_{k,i}(s+1)\ \text{ if $i \neq 0$}.
\end{cases}
\end{align*}
\end{Theorem}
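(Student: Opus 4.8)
The plan is to exploit the finite presentation of Proposition~\ref{Prop32}, which identifies $Y_{\ve_1,\ve_2}(\widehat{\mathfrak{sl}}(m|n))$ with the algebra generated by $X^\pm_{i,0}, X^\pm_{i,1}, H_{i,0}, H_{i,1}$ subject to \eqref{Eq2.1}--\eqref{Eq2.12}. Since the asserted formulas prescribe $\ev_{\ve_1,\ve_2}$ precisely on these finitely many generators, it suffices to (i) check that each image is a well-defined element of the completion $U(\widehat{\mathfrak{gl}}(m|n)^\text{str})_{{\rm comp}}$, and (ii) verify that the images satisfy every relation \eqref{Eq2.1}--\eqref{Eq2.12}; the map then extends uniquely by the universal property. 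For (i), each defining formula is homogeneous (degree $0$ for $H_{i,r}$, and the degree of the corresponding root vector for $X^\pm_{i,r}$), and in each fixed degree $d$ the infinite sums $\sum_{s\geq0}\sum_k{(-1)}^{p(k)}E_{a,k}(-s)E_{k,b}(s)$ contribute, modulo $\sum_{r>N}U(\widehat{\mathfrak{gl}}(m|n)^\text{str})_{d-r}U(\widehat{\mathfrak{gl}}(m|n)^\text{str})_{r}$, only finitely many terms; hence they converge in the projective limit defining $U(\widehat{\mathfrak{gl}}(m|n)^\text{str})_{{\rm comp}}$.

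For (ii) I would split the relations into two groups. The relations involving only the degree-zero generators---\eqref{Eq2.2}, \eqref{Eq2.4}, the $r=s=0$ case of \eqref{Eq2.1}, and the Serre-type relations \eqref{Eq2.10}--\eqref{Eq2.12}---hold because $h_i,x^\pm_i$ are exactly the Chevalley generators of $\widehat{\mathfrak{sl}}(m|n)$ sitting inside $\widehat{\mathfrak{gl}}(m|n)^\text{str}$: after passing to the quotient by $z-1$ they satisfy the affine Serre relations of $\widehat{\mathfrak{sl}}(m|n)$, whose generalized Cartan matrix is precisely $(a_{ij})$. These are therefore immediate from the Lie-superalgebra structure and require no new computation.

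The substance of the proof is the second group, the relations that involve the level-one generators $H_{i,1}$ and $X^\pm_{i,1}$: the remaining cases of \eqref{Eq2.1}, together with \eqref{Eq2.3} and \eqref{Eq2.5}--\eqref{Eq2.9}. Here I would reduce everything to one elementary identity in $U(\widehat{\mathfrak{gl}}(m|n)^\text{str})$, namely the bracket of a quadratic ``Casimir slice'' $\sum_{s\geq0}\sum_k{(-1)}^{p(k)}E_{a,k}(-s)E_{k,b}(s)$ with a degree-zero or degree-one matrix element $E_{c,d}(\pm1)$. Such a bracket telescopes: the bulk terms cancel in pairs, and what survives is a boundary term together with a central contribution from the supertrace cocycle and from the $z$-term of $\widehat{\mathfrak{gl}}(m|n)^\text{str}$. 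Carrying this out for $[\ev_{\ve_1,\ve_2}(\tilde{H}_{i,1}),\ev_{\ve_1,\ve_2}(X^\pm_{j,0})]$ should reproduce \eqref{Eq2.5} and, at $j=i\pm1$, fix the normalization of $\ev_{\ve_1,\ve_2}(X^\pm_{i,1})$; the collapse of both $[\ev_{\ve_1,\ve_2}(X^+_{i,1}),\ev_{\ve_1,\ve_2}(X^-_{j,0})]$ and $[\ev_{\ve_1,\ve_2}(X^+_{i,0}),\ev_{\ve_1,\ve_2}(X^-_{j,1})]$ onto $\ev_{\ve_1,\ve_2}(H_{i,1})$ gives \eqref{Eq2.3}; and \eqref{Eq2.8} is obtained by computing $[\ev_{\ve_1,\ve_2}(X^\pm_{i,1}),\ev_{\ve_1,\ve_2}(X^\pm_{j,0})]-[\ev_{\ve_1,\ve_2}(X^\pm_{i,0}),\ev_{\ve_1,\ve_2}(X^\pm_{j,1})]$ and identifying the anticommutator term produced by the cocycle. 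Finally $[\ev_{\ve_1,\ve_2}(H_{i,1}),\ev_{\ve_1,\ve_2}(H_{j,1})]=0$ follows because the two Casimir slices commute up to telescoping terms that vanish after the same boundary cancellation.

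The hard part will be the relations attached to the affine node, \eqref{Eq2.6}, \eqref{Eq2.7} and \eqref{Eq2.9}, as well as the cubic relation \eqref{Eq2.12} at $i=0,m$. At node $0$ the root vectors $x^\pm_0$ carry an explicit $t^{\pm1}$ and $h_0$ involves the central element $\tilde{c}$, so the telescoping computation acquires extra boundary terms and picks up the second central element $z$; it is exactly the prescribed specializations $z=1$ and $\tilde{c}=\tfrac{(-m+n)\ve_1}{\hbar}$ that convert these contributions into the parameter $\ve=-(m-n)\ve_2$ and the shift $\ve+\tfrac{m-n}{2}\hbar$ appearing on the right-hand sides of \eqref{Eq2.6}--\eqref{Eq2.9}. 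The main obstacle is therefore the careful bookkeeping of these central and boundary terms---tracking every sign ${(-1)}^{p(k)}$, every $s$-shift, and the distinction between the supertrace cocycle and the $z$-cocycle---so that the $\ve_1,\ve_2$-dependent coefficients emerge precisely as stated. Once a single clean telescoping lemma is isolated, each relation reduces to matching these coefficients.
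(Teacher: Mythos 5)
The paper never proves Theorem~\ref{thm:main}: it is imported verbatim from \cite{U2} (Proposition~5.2), and the only trace of its proof in this paper is Lemma~\ref{Claim1211} and the proof of Theorem~\ref{Main}, which follow exactly the scheme you propose --- verify the relations \eqref{Eq2.1}--\eqref{Eq2.12} of the finite presentation of Proposition~\ref{Prop32} one by one inside the completion, with the degree-zero relations coming for free from the Chevalley--Serre structure of $\widehat{\mathfrak{sl}}(m|n)\subset\widehat{\mathfrak{gl}}(m|n)^{\text{str}}$ and the level-one relations reduced to telescoping brackets of the quadratic sums. Your convergence argument for the infinite sums is also the correct (standard) one. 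So the architecture of your proposal coincides with the intended proof.

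There is, however, one concrete misstep in how you distribute the difficulty, and as written that step would fail. You claim $[\ev_{\ve_1,\ve_2}(H_{i,1}),\ev_{\ve_1,\ve_2}(H_{j,1})]=0$ ``follows because the two Casimir slices commute up to telescoping terms that vanish after the same boundary cancellation,'' and you locate the role of the second central element $z$ at the affine node. This is backwards. The $z$-cocycle of $\widehat{\mathfrak{gl}}(m|n)^{\text{str}}$ appears only in brackets of two \emph{diagonal} matrix units at opposite nonzero degrees; no such bracket occurs in \eqref{Eq2.3}, \eqref{Eq2.5}--\eqref{Eq2.9}, where the shifts $\ve+\frac{m-n}{2}\hbar$ arise from the explicit $\hbar\tilde{c}$ summands in the formulas (via the numerical value $\tilde{c}=\frac{(-m+n)\ve_1}{\hbar}$) together with the ordinary supertrace cocycle. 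The place where $z$ is indispensable is precisely the Cartan relation \eqref{Eq2.1}: the telescoping leftovers in $[\ev_{\ve_1,\ve_2}(H_{i,1}),\ev_{\ve_1,\ve_2}(H_{j,1})]$ do \emph{not} vanish by themselves, but cancel against the $z$-cocycle contributions after the specialization $z=1$. This is the entire reason the target is the nonstandard two-central-element affinization $\widehat{\mathfrak{gl}}(m|n)^{\text{str}}$ rather than the usual one; with the standard diagonal cocycle the map would not be a homomorphism. The paper itself makes this point inside the proof of Claim~\ref{Claim1236}, where it remarks that $[\widetilde{\ev}(H_{i,1}),\widetilde{\ev}(H_{j,1})]\neq0$ exactly because the diagonal part of the inner product of $\widehat{\mathfrak{gl}}(m|n)^{\kappa}$ differs from that of $\widehat{\mathfrak{gl}}(m|n)^{\text{str}}$ (see \eqref{equat2}). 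So your plan is salvageable, but the ``hard part'' is \eqref{Eq2.1}, not the affine node, and your telescoping lemma must be stated so that its boundary terms include the diagonal--diagonal central contributions.
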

It was shown in \cite{U3} that the image of $\ev_{\ve_1,\ve_2}$ is dense in $U(\widehat{\mathfrak{gl}}(m|n)^\text{str})_{{\rm comp}}$ in the case when $\ve_1\neq0$. 
\begin{Remark}
In \cite{U2}, the evaluation map was defined in terms of the generators $h_{i,r}$ and $x^\pm_{i,r}$\ $(r=0,1)$. 
\end{Remark}
In the non-super case, the affine Yangian was defined in Definition~3.2 of \cite{Gu2} and Definition~2.3 of \cite{Gu1} as follows.
\begin{Definition}
Suppose that $m\geq3$ and set two $m\times m$-matrices $(a_{i,j})$ and $(m_{i,j})$ as
\begin{gather*}
a_{ij} =
	\begin{cases}
	2  &\text{if } i=j, \\
	-1 &\text{if } i=j \pm 1, \\
	        -1 &\text{if }(i,j)=(0,m-1),(m-1,0),\\
		0  &\text{otherwise,}
	\end{cases}\ 
	 m_{i,j}=
	\begin{cases}
	1&\text{if } i=j - 1,\\
		-1 &\text{if } i=j + 1,\\
	        1 &\text{if }(i,j)=(0,m-1),\\
		-1 &\text{if }(i,j)=(m-1,0),\\
		0  &\text{otherwise}.
	\end{cases}
\end{gather*}
The affine Yangian $Y_{\ve_1,\ve_2}(\widehat{\mathfrak{sl}}(m))$ is the associative algebra over $\mathbb{C}$ generated by $x_{i,r}^{+}, x_{i,r}^{-}$, $h_{i,r}$ $(i \in \{0,1,\cdots, m-1\}, r \in \mathbb{Z}_{\geq 0})$ with parameters $\ve_1, \ve_2 \in \mathbb{C}$ subject to the defining relations \eqref{eq1.1}-\eqref{eq1.6}.
\end{Definition}
Similarly to Proposition~\ref{Prop32}, the affine Yangian $Y_{\ve_1,\ve_2}(\widehat{\mathfrak{sl}}(m))$ also has a presentation whose generators are $H_{i,r},X^\pm_{i,r}\ (0\leq i\leq m-1,\ r=0,1)$.
\begin{Proposition}
The affine Yangian $Y_{\ve_1,\ve_2}(\widehat{\mathfrak{sl}}(m))$ is isomorphic to the associative algebra generated by $X_{i,r}^{+}, X_{i,r}^{-}, H_{i,r}$ $(i \in \{0,1,\cdots, m-1\}, r = 0,1)$ subject to the defining relations \eqref{Eq2.1}-\eqref{Eq2.4}, \eqref{Eq2.10} and
\begin{gather}
[\tilde{H}_{i,1}, X_{j,0}^{\pm}] = \pm a_{ij}\left(X_{j,1}^{\pm}\right),\text{ if }(i,j)\neq(0,m-1),(m-1,0),\\
[\tilde{H}_{0,1}, X_{m-1,0}^{\pm}] = \mp\left(X_{m-1,1}^{\pm}-(\ve+\dfrac{m}{2}\hbar) X_{m-1, 0}^{\pm}\right),\\
[\tilde{H}_{m-1,1}, X_{0,0}^{\pm}] = \mp\left(X_{0,1}^{\pm}+(\ve+\dfrac{m}{2}\hbar) X_{0, 0}^{\pm}\right),\\
[X_{i, 1}^{\pm}, X_{j, 0}^{\pm}] - [X_{i, 0}^{\pm}, X_{j, 1}^{\pm}] = \pm a_{ij}\dfrac{\hbar}{2} \{X_{i, 0}^{\pm}, X_{j, 0}^{\pm}\}\text{ if }(i,j)\neq(0,m-1),(m-1,0),\\
\begin{align}
&[X_{0, 1}^{\pm}, X_{m-1, 0}^{\pm}] - [X_{0, 0}^{\pm}, X_{m-1, 1}^{\pm}]\nonumber\\
&\qquad\qquad= \pm\dfrac{\hbar}{2} \{X_{0, 0}^{\pm}, X_{m-1, 0}^{\pm}\} - (\ve+\dfrac{m}{2}\hbar) [X_{0, 0}^{\pm}, X_{m-1, 0}^{\pm}],
\end{align}
\end{gather}
where $\hbar=\ve_1+\ve_2$, $\tilde{H}_{i,1}=H_{i,1}-\dfrac{\hbar}{2}H_{i,0}^2$, and $\ve=-m\ve_2$.
\end{Proposition}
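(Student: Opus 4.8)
The argument is the non-super specialization of the proof of Proposition~\ref{Prop32}. First I would record the analogue of Theorem~\ref{Mini} for $Y_{\ve_1,\ve_2}(\widehat{\mathfrak{sl}}(m))$: this algebra is already generated by $\{h_{i,r},x^\pm_{i,r}\mid 0\le i\le m-1,\ r=0,1\}$ subject to the relations \eqref{eq2.1}--\eqref{eq2.7}, now with the matrices $(a_{ij})$, $(m_{ij})$ of the non-super case and \emph{without} the extra relations \eqref{eq2.8}--\eqref{eq2.9}, which are attached to the odd nodes $i=0,m$ and hence have no counterpart here. This is proved by exactly the reduction carried out just before Theorem~\ref{Mini}: relation \eqref{11111} together with \eqref{eq1.2} gives \eqref{eq1297} for every node (there is no odd node in the $\mathfrak{sl}(m)$ case, so \eqref{eq1298} is never needed), and therefore all generators with $r\ge2$ are expressed inductively in terms of those with $r=0,1$; the same fact is available in Guay's work \cite{Gu1}, \cite{Gu2}.

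Next I would exhibit the explicit isomorphism $\Psi$ from $Y_{\ve_1,\ve_2}(\widehat{\mathfrak{sl}}(m))$, presented via the $h,x$ generators, to the algebra presented via the $H,X$ generators, by
\begin{gather*}
\Psi(h_{i,0})=H_{i,0},\quad \Psi(x^\pm_{i,0})=X^\pm_{i,0},\\
\Psi(h_{i,1})=\begin{cases} H_{0,1} & \text{if } i=0,\\[2pt] H_{i,1}-\dfrac{i}{2}(\ve_1-\ve_2)H_{i,0} & \text{if } i\neq0. \end{cases}
\end{gather*}
This is the $n=0$ specialization of the map $\Psi$ of Proposition~\ref{Prop32}, in which the correction $\delta(i>m)(i-m)$ disappears because no node lies beyond $m$. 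The images of $x^\pm_{i,1}$ and of the remaining $h_{i,r},x^\pm_{i,r}$ with $r\ge2$ are then forced by \eqref{eq1297}, so the entire content of the proposition is that $\Psi$ is a well-defined isomorphism intertwining the two sets of defining relations.

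The only relations whose verification is not immediate are those attached to the affine node, i.e. the pairs $(i,j)=(0,m-1)$ and $(m-1,0)$. Applying $\Psi$ to \eqref{eq2.5} and using $\tilde{H}_{i,1}=H_{i,1}-\frac{\hbar}{2}H_{i,0}^2$, the shift $-\frac{i}{2}(\ve_1-\ve_2)H_{i,0}$ is chosen precisely so that it absorbs the $m_{ij}$-terms along every interior bond, producing the clean relation \eqref{Eq2.5} there; the residual discrepancy accumulated around the affine loop is what survives as the coefficient $\ve+\frac{m}{2}\hbar$ in \eqref{Eq2.6} and \eqref{Eq2.7}, since $\ve+\frac{m}{2}\hbar=-m\ve_2+\frac{m}{2}(\ve_1+\ve_2)=\frac{m}{2}(\ve_1-\ve_2)$. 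The same bookkeeping turns \eqref{eq2.6} into the clean quadratic relation at interior bonds and into the affine-node relation carrying the extra term $-(\ve+\frac{m}{2}\hbar)[X^\pm_{0,0},X^\pm_{m-1,0}]$.

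The main obstacle is thus purely computational: tracking how the node-dependent shifts $\frac{i}{2}(\ve_1-\ve_2)H_{i,0}$ propagate through \eqref{eq2.5} and \eqref{eq2.6} and confirming that the leftover is concentrated entirely at the single affine bond, with exactly the coefficient $\frac{m}{2}(\ve_1-\ve_2)=\ve+\frac{m}{2}\hbar$. Once this is checked, invertibility is immediate: the inverse of $\Psi$ is given by $H_{i,0}\mapsto h_{i,0}$, $X^\pm_{i,0}\mapsto x^\pm_{i,0}$, $H_{0,1}\mapsto h_{0,1}$, and $H_{i,1}\mapsto h_{i,1}+\frac{i}{2}(\ve_1-\ve_2)h_{i,0}$ for $i\neq0$, so $\Psi$ is an isomorphism.
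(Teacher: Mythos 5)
Your proposal is correct and takes essentially the same route as the paper: the paper treats the non-super case exactly as Proposition~\ref{Prop32}, namely by exhibiting the same change of generators $\Psi$ (with the correction $\delta(i>m)(i-m)$ vanishing when $n=0$, so the shift is $\frac{i}{2}(\ve_1-\ve_2)H_{i,0}$) and asserting it is an isomorphism. Your extra bookkeeping --- reducing to the $r=0,1$ presentation via \eqref{eq1297}, absorbing the $m_{ij}$-terms along interior bonds, and identifying the residual loop monodromy $\frac{m}{2}(\ve_1-\ve_2)=\ve+\frac{m}{2}\hbar$ at the affine bond --- is precisely the verification the paper leaves as ``clear.''
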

The evaluation map for the affine Yangian $Y_{\ve_1,\ve_2}(\widehat{\mathfrak{sl}}(m))$ was constructed in Section~6 of \cite{Gu1} and Theorem~3.8 of \cite{K1}. In fact, the evaluation map of \cite{Gu1} and \cite{K1} was defined in the same formula as that of Theorem~\ref{thm:main} by setting $n=0$ and assuming all of the parity is equal to zero. In the non-super case, the surjectivity of the evaluation map was shown in Theorem~4.18 of \cite{K2}.

\section{Generators of rectangular $W$-superalgebras of type $A$}
We fix some notations for vertex algebras. For a vertex algebra $V$, we denote the generating field associated with $v\in V$ by $v(z)=\displaystyle\sum_{n\in\mathbb{Z}}\limits v_{(n)}z^{-n-1}$. We also denote the OPE of $V$ by
\begin{equation*}
u(z)v(w)\sim\displaystyle\sum_{s\geq0}\limits \dfrac{(u_{(s)}v)(w)}{(z-w)^{s+1}}
\end{equation*}
for all $u, v\in V$. We denote the identity vector (resp.\ the translation operator) by $|0\rangle$ (resp.\ $\partial$).

First, we recall the definition of rectangular $W$-superalgebras of type $A$ (see \cite{KW1}, \cite{KW2}, and \cite{A}). Let us set 
\begin{equation*}
\mathfrak{g}=\mathfrak{gl}(ml|nl)=\displaystyle\bigoplus_{\substack{1\leq i,j\leq m+n\\1\leq s,t\leq l}}\limits \mathbb{C}e_{(s-1)(m+n)+i,(t-1)(m+n)+j},
\end{equation*}
where $e_{(s-1)(m+n)+i,(t-1)(m+n)+j}$ is the unit matrix whose parity is $p(i)+p(j)$.
Since $\mathfrak{gl}(ml|nl)$ is isomorphic to $\mathfrak{gl}(m|n)\otimes \mathfrak{gl}(l)$ as a graded vector space, we identify $e_{(s-1)(m+n)+i,(t-1)(m+n)+j}\in\mathfrak{gl}(ml|nl)$ with $e_{i,j}\otimes e_{s,t}\in\mathfrak{gl}(m|n)\otimes \mathfrak{gl}(l)$. We set a parity of $e_{i,j}\in\mathfrak{gl}(m|n)$ as $p(i)+p(j)$.
We take an even nilpotent element $f=\displaystyle\sum_{s=1}^{l-1}\displaystyle\sum_{i=1}^{m+n}e_{s(m+n)+i,(s-1)(m+n)+i}\in\mathfrak{gl}(ml|nl)$ and fix $k\in\mathbb{C}$. We also take $( \  | \ )$ as a supersymmetric invariant inner product of $\mathfrak{g}$ such that 
\begin{equation}\label{leq}
(u|v)=\begin{cases}
k\str(uv)&\text{ if $u$ or $v$ is an element of $\mathfrak{sl}(ml|nl)$},\\
k\str(uv)+{(-1)}^{p(i)+p(j)}(1-c)&\text{ if $u=e_{i,i}\otimes e_{r_1,r_1}, v=e_{j,j}\otimes e_{r_2,r_2}$},
\end{cases}
\end{equation}
where $c$ is a complex number and $\str$ is a supertrace of $\mathfrak{gl}(ml|nl)$. 
We set 
\begin{equation*}
\mathfrak{g}_t=\bigoplus_{\substack{1\leq i,j\leq m+n\\0\leq s\leq l-1\\0\leq s+t\leq l-1}}\mathbb{C}e_{s(m+n)+i,(s+t)(m+n)+j}.
\end{equation*}
and fix a $\mathfrak{sl}_2$-triple $(x, e, f)$ such that
\begin{equation*}
\mathfrak{g}_t=\{y\in\mathfrak{g}\mid[x,y]=ty\}.
\end{equation*}
Let us set
\begin{gather*}
S=\{(i,j,s,t)\mid1\leq i,j\leq m+n,\ 0\leq s,s+t\leq l-1\},\\
S_+=\{(i,j,s,t)\mid1\leq i,j\leq m+n,\  0\leq s,s+t\leq l-1,t\geq1\}.
\end{gather*}
For all $\beta=(i,j,s,t)\in S$, we also set $u_\beta$ as $e_{s(m+n)+i,(s+t)(m+n)+j}$ and $p(\beta)$ as the parity of $u_\beta$. Then, we have
\begin{equation*}
\mathfrak{g}=\bigoplus_{\beta\in S}\limits\mathbb{C}u_\beta,\qquad\mathfrak{g}_{\geq0}=\bigoplus_{t\geq0}\limits\mathfrak{g}_t=\bigoplus_{\beta\in S_+}\limits\mathbb{C}u_\beta.
\end{equation*}
Moreover, let $\mathfrak{b}$ be $\displaystyle\bigoplus_{j\leq0}\limits \mathfrak{g}_j$, which is a subalgebra of $\mathfrak{g}$.
We define $\kappa$ as an inner product of $\mathfrak{b}$ such that
\begin{gather*}
\kappa(u,v)=(u|v)+\frac{1}{2}\big(\kappa_\mathfrak{g}(u,v)-\kappa_{\mathfrak{g}_0}(p_0(u),p_0(v))\big)\ \text{for all }u,v\in\mathfrak{b},
\end{gather*}
where $p_0\colon\mathfrak{b}\to\mathfrak{g}_0$ is the projection map and $\kappa_\mathfrak{g}$\  $(\text{resp.}\  \kappa_{\mathfrak{g}_0})$ is the Killing form on $\mathfrak{g}$\ $(\text{resp.}\ \mathfrak{g}_0)$. By the definition of $\kappa$, we have
\begin{align*}
&\quad\kappa(e_{s_1(m+n)+i_1,t_1(m+n)+j_1},e_{s_2(m+n)+i_2,t_2(m+n)+j_2})\\
&=\delta_{s_1,t_2}\delta_{t_1,s_2}\delta_{i_1,j_2}\delta_{j_1,i_2}{(-1)}^{p(i_1)}(k+(l-1)(m-n))\\
&\qquad\qquad-\delta_{s_1,t_1}\delta_{s_2,t_2}\delta_{i_1,j_1}\delta_{i_2,j_2}{(-1)}^{p(i_1)+p(i_2)}(c-\delta_{s_1,s_2}).
\end{align*}
Let $\hat{\mathfrak{b}}$ be the Lie superalgebra $\mathfrak{b}\otimes\mathbb{C}[t^{\pm1}]\oplus\mathbb{C}y$ whose commutator relations are
\begin{gather*}
[at^u,bt^v]=[a,b]t^{u+v}+\delta_{u+v,0}u\kappa(a,b)y,\\
\text{$y$ is a central element}.
\end{gather*}
We also set a left $\hat{\mathfrak{b}}$-module $V^\kappa(\mathfrak{b})$ as $U(\hat{\mathfrak{b}})/U(\hat{\mathfrak{b}})(\mathfrak{b}[t]\oplus\mathbb{C}(y-1))\cong U(\mathfrak{b}[t^{-1}]t^{-1})$. 
Then, it has a vertex algebra structure whose identity vector is $1$ and the generating field $(ut^{-1})(z)$ is equal to $\displaystyle\sum_{s\in\mathbb{Z}}\limits(ut^s)z^{-s-1}$ for all $u\in\mathfrak{b}$. We call $V^\kappa(\mathfrak{b})$ the universal affine vertex algebra associated with $(\mathfrak{b},\kappa)$.

In order to simplify the notation, we denote the generating field $(ut^{-1})(z)$ as $u(z)$. By the definition of $V^\kappa(\mathfrak{b})$, generating fields $u(z)$ and $v(z)$ satisfy the OPE
\begin{gather}
u(z)v(w)\sim\dfrac{[u,v](w)}{z-w}+\dfrac{\kappa(u,v)}{(z-w)^2}\label{OPE1}
\end{gather}
for all $u,v\in\mathfrak{b}$. 

We set a Lie superalgebra $\mathfrak{a}_{m,n}=\displaystyle\bigoplus_{u_\beta\in\mathfrak{g}_{\leq0}}\limits\mathbb{C}J^{(u_\beta)}\oplus\displaystyle\bigoplus_{u_\beta\in\mathfrak{g}_{<0}}\limits\mathbb{C}\psi_{(u_\beta)}$  with the following commutator relations;
\begin{gather*}
[J^{(u)},J^{(v)}]=J^{([u,v])},\quad [J^{(e_{i,j})},\psi_{e_{s,t}}]=\delta_{j,s}\psi_{e_{i,t}}-\delta_{i,t}{(-1)}^{p(e_{i,j})(p(e_{s,t})+1)}\psi_{e_{s,j}},\quad [\psi_u,\psi_v]=0,
\end{gather*}
where the parity of $J^{(u_\beta)}$ (resp.\ $\psi_{u_\beta}$) is equal to $p(\beta)$ (resp.\ $p(\beta)+1$) and we denote $\displaystyle\sum_{u_\beta\in\mathfrak{g}_{\leq0}}\limits a_\beta J^{(u_\beta)}$ (resp.\ $\displaystyle\sum_{u_\beta\in\mathfrak{g}_{<0}}\limits a_\beta \psi_{(u_\beta)}$) by $J^{(\sum_{u_\beta\in\mathfrak{g}_{\leq0}} a_\beta u_\beta)}$ (resp.\ $\psi_{\sum_{u_\beta\in\mathfrak{g}_{<0}} a_\beta u_\beta}$) for all $a_\beta\in\mathbb{C}$. We define an affinization of $\mathfrak{a}_{m,n}$ by using the inner product on $\mathfrak{a}_{m,n}$ such that
\begin{gather*}
\kappa_{m,n}(J^{(u)},J^{(v)})=\kappa(u,v),\quad \kappa_{m,n}(J^{(u)},\psi_v)=\kappa_{m,n}(\psi_u,\psi_v)=0.
\end{gather*}
By \eqref{OPE1}, $V^{\kappa_{m,n}}(\mathfrak{a}_{m,n})$ contains $V^\kappa(\mathfrak{b})$. We identify $ut^{-1}\in V^\kappa(\mathfrak{b})$ with $J^{(u)}t^{-1}\in V^{\kappa_{m,n}}(\mathfrak{a}_{m,n})$.

For all $u\in \mathfrak{a}_{m,n}$, let $u[-s]$ be $ut^{-s}$. In this section, we regard $V^{\kappa_{m,n}}(\mathfrak{a}_{m,n})$ (resp.\ $V^\kappa(\mathfrak{b})$) as a non-associative superalgebra whose product $\cdot$ is defined by
\begin{equation*}
u[-t]\cdot v[-s]=(u[-t])_{(-1)}v[-s].
\end{equation*}
We sometimes omit $\cdot$ and denote $\psi_{e_{(v+w)(m+n)+j,v(m+n)+i}}[s]$ by $\psi_{(v+w)(m+n)+j,v(m+n)+i}[s]$ in order to simplify the notation. A rectangular $W$-superalgebra $\mathcal{W}^k(\mathfrak{gl}(lm|ln),(l^{(m|n)}))$ can be realized as the subalgebra of $V^{\kappa_{m,n}}(\mathfrak{a}_{m,n})$ (\cite{KW1} and \cite{KW2}) as follows.

Let us set $\alpha$ as $k+(l-1)(m-n)$. We can define an odd differential $d_0 \colon V^{\kappa}(\mathfrak{b})\to V^{\kappa_{m,n}}(\mathfrak{a}_{m,n})$ determined by
\begin{gather}
d_01=0,\\
[d_0,\partial]=0,
\end{gather}
\begin{align}
&\quad[d_0,e_{(s-1)(m+n)+j,(t-1)(m+n)+i}[-1]]\nonumber\\
&=\sum_{\substack{t< a\leq s,\\1\leq r\leq m+n}}{(-1)}^{p(e_{i,j})+p(e_{i,r})p(e_{r,j})}e_{(a-1)(m+n)+r, (t-1)(m+n)+i}[-1]\psi_{(s-1)(m+n)+j,(a-1)(m+n)+r}[-1]\nonumber\\
&\quad-\sum_{\substack{t\leq a< s,\\1\leq r\leq m+n}}{(-1)}^{p(e_{i,r})p(e_{r,j})}\psi_{(a-1)(m+n)+r,(t-1)(m+n)+i}[-1]e_{(s-1)(m+n)+j, (a-1)(m+n)+r}[-1]\nonumber\\
&\quad +\delta(s<t){(-1)}^{p(j)}\alpha\psi_{(s-1)(m+n)+j,(t-1)(m+n)+i}[-2]\nonumber\\
&\quad +{(-1)}^{p(j)}\psi_{s(m+n)+j,(t-1)(m+n)+i}[-1]-\psi_{(s-1)(m+n)+j,(t-2)(m+n)+i}[-1].\label{ee1}
\end{align}
\begin{Definition}[Kac-Roan-Wakimoto~\cite{KRW}, Theorem~2.4]\label{T125}
The rectangular $W$-superalgebra associated with a Lie superalgebra $\mathfrak{gl}(m|n)$ and a nilpotent element $f=\displaystyle\sum_{s=1}^{l-1}\displaystyle\sum_{i=1}^{m+n}e_{s(m+n)+i,(s-1)(m+n)+i}$ is the vertex subalgebra defined by
\begin{equation*}
\mathcal{W}^k(\mathfrak{gl}(ml|nl),(l^{(m|n)}))=\{y\in V^\kappa(\mathfrak{b})\subset V^{\kappa_{m,n}}(\mathfrak{a}_{m,n})\mid d_0(y)=0\}.
\end{equation*}
\end{Definition}
We denote the rectangular $W$-superalgebra associated with a Lie superalgebra $\mathfrak{gl}(m|n)$ and a nilpotent element $f$ by $\mathcal{W}^k(\mathfrak{gl}(lm|ln),(l^{(m|n)}))$. The rest of this section is devoted to the construction of two kinds of elements $W^{(1)}_{i,j}$ and $W^{(2)}_{i,j}$, which are generators of $\mathcal{W}^k(\mathfrak{gl}(lm|ln),(l^{(m|n)}))$. 

We regard $V^\kappa(\mathfrak{b})\otimes\mathbb{C}[\tau]$ and $V^{\kappa_{m,n}}(\mathfrak{a}_{m,n})\otimes\mathbb{C}[\tau]$ as non-associative superalgebras whose defining relations are given by
\begin{equation*}
u[-t]\cdot v[-s]=(u[-t])_{(-1)}v[-s],\ [\tau,u[-s]]=su[-s],
\end{equation*}
where $\tau$ is an even element.
Let $\widetilde{d}_0^{m,n}\colon V^{\kappa_{m,n}}(\mathfrak{a}_{m,n})\otimes\mathbb{C}[\tau]\to V^{\kappa_{m,n}}(\mathfrak{a}_{m,n})\otimes\mathbb{C}[\tau]$ be the odd differential determined by
\begin{equation*}
\widetilde{d}_0^{m,n}1=0,\quad [\widetilde{d}_0^{m,n},u[-s]]=[d_0,u[-s]],\quad [\widetilde{d}_0^{m,n},\tau]=0.
\end{equation*}
First, let us recall how to construct generators of the principal $W$-algebra $\mathcal{W}^k(\mathfrak{gl}(l),(l^1))$ (\cite{AM}, Section~2). We denote by $T(C)$ a non-associative free algebra associated with a vector space $C$ and by $\mathfrak{gl}(l)_{\leq0}$ the Lie algebra $\displaystyle\bigoplus_{\substack{1\leq j\leq i\leq l}}\limits\mathbb{C}e_{i,j}$. In the principal case, $\mathfrak{b}$ is equal to $\mathfrak{gl}(l)_{\leq0}$. 
By Definition~\ref{T125}, the principal $W$-algebra can be defined as
\begin{equation*}
\mathcal{W}^k(\mathfrak{gl}(l),(l^1))=\{x\in V^\kappa(\mathfrak{gl}(l)_{\leq0})\otimes\mathbb{C}[\tau]\mid d_0(x)=0\}.
\end{equation*}
Similarly to $V^\kappa(\mathfrak{b})\otimes\mathbb{C}[\tau]$, we define a non-associative algebra $T(\mathfrak{gl}(l)_{\leq0}[t^{-1}]t^{-1})\otimes \mathbb{C}[\tau]$. Let us set $\pi$ as $k+l-1$ and an $l\times l$ matrix $B=(b_{i,j})_{1\leq i,j\leq l}$ as
\begin{equation}
\begin{bmatrix}
\pi\tau+e_{1,1}[-1] &-1\phantom{-}&0&\dots & 0\\[0.4em]
e_{2,1}[-1] &\pi\tss\tau+e_{2,2}[-1] &-1\phantom{-}&\dots & 0\\[0.4em]
\vdots &\vdots &\ddots & &\vdots\\[0.4em]
e_{l-1,1}[-1] &e_{l-1,2}[-1] &\dots
&\pi\tss\tau+e_{l-1,l-1}[-1] &-1\phantom{-}\\[0.4em]
e_{l, 1}[-1] &e_{l, 2}[-1] &\dots &e_{l,l-1}[-1]  &\pi\tss\tau+e_{l,l}[-1]
\end{bmatrix}\label{matrx}
\end{equation}
whose entries are elements of $T(\mathfrak{gl}(l)_{\leq0}[t^{-1}]t^{-1})\otimes \mathbb{C}[\tau]$. 
For any matrix $A=(a_{i,j})_{1\leq i,j\leq s}$, we define $\text{cdet}(A)$ as 
\begin{equation*}
\displaystyle\sum_{\sigma\in\mathfrak{S}_s}\limits\text{sgn}(\sigma)a_{\sigma(1),1}\big(a_{\sigma(2),2}(a_{\sigma(3),3}\cdots a_{\sigma(s-1),s-1})a_{\sigma(s),s}\big)\in T(\mathfrak{gl}(l)_{\leq0}[t^{-1}]t^{-1})\otimes\mathbb{C}[\tau].
\end{equation*}
By the commutator relation of $T(\mathfrak{gl}(l)_{\leq0}[t^{-1}]t^{-1})\otimes\mathbb{C}[\tau]$, we can rewrite $\text{cdet}(B)$ as $\displaystyle\sum_{r=0}^l\limits\widetilde{W}^{(r)}(\pi\tau)^{l-r}$ such that $\widetilde{W}^{(r)}\in T(\mathfrak{gl}(l)_{\leq0}[t^{-1}]t^{-1})$. Let $p$ be the projection map from $T(\mathfrak{gl}(l)_{\leq0}[t^{-1}]t^{-1})$ to $V^\kappa(\mathfrak{gl}(l)_{\leq0})=U(\mathfrak{gl}(l)_{\leq0}[t^{-1}]t^{-1})$ and $W^{(r)}$ be $p(\widetilde{W}^{(r)})$.
Proving that $[\widetilde{d}_0^{1,0},p(\text{cdet}(B))]=0$, we obtain the following theorem (see Theorem 2.1 of \cite{AM}).
\begin{Theorem}\label{T305}
The $W$-superalgebra $\mathcal{W}^k(\mathfrak{gl}(l),(l^1))$ is generated by $\{W^{(r)}\}_{1\leq r\leq l}$.
\end{Theorem}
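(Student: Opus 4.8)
The plan is to follow \cite{AM} in two stages: first membership of the $W^{(r)}$ in the $W$-algebra, then generation. For membership I would establish the master identity $[\widetilde{d}_0^{1,0}, p(\mathrm{cdet}(B))] = 0$; since $p(\mathrm{cdet}(B)) = \sum_{r=0}^{l} W^{(r)}(\pi\tau)^{l-r}$ and $\widetilde{d}_0^{1,0}$ commutes with $\tau$, this is equivalent to $d_0 W^{(r)} = 0$ for every $r$, i.e.\ to $W^{(r)}\in\mathcal{W}^k(\mathfrak{gl}(l),(l^1))$ by Definition~\ref{T125}. To prove it I would let $\widetilde{d}_0^{1,0}$ act on the ordered products defining $p(\mathrm{cdet}(B))$ through the Leibniz rule and the explicit formula \eqref{ee1}, specialized to the scalar principal case (where $m+n=1$ and $\alpha=\pi$), and reorganize $[\widetilde{d}_0^{1,0}, p(\mathrm{cdet}(B))]$ as a signed sum of column-determinants of matrices obtained from $B$ of \eqref{matrx} by elementary column manipulations. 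The superdiagonal entries $-1$ and the diagonal entries $\pi\tau + e_{i,i}[-1]$ are calibrated precisely so that these contributions telescope and each resulting determinant acquires a repeated or a vanishing column, hence vanishes.

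For the second stage I would invoke the structure theory of $W$-algebras (\cite{KRW}): $\mathcal{W}^k(\mathfrak{gl}(l),(l^1))$ is freely generated by $l$ fields of conformal weights $1, 2, \dots, l$, one for each element of a homogeneous basis $\mathrm{id}, f, f^2, \dots, f^{l-1}$ of the centralizer $\mathfrak{g}^f$ of $f$, the generator attached to $f^{r-1}\in\mathfrak{g}_{-(r-1)}$ having weight $r$. I would then check that $W^{(r)}$ is homogeneous of conformal weight $r$ for the Hamiltonian grading, under which a field $u[-1]$ with $u\in\mathfrak{g}_{-d}$ has weight $d+1$. Indeed, expanding the column-determinant, each monomial of $\widetilde{W}^{(r)}$ is indexed by a permutation whose non-fixed columns split into cycles; a cycle on the columns $j, j+1, \dots, i$ contributes a single lower entry $e_{i,j}[-1]\in\mathfrak{g}_{-(i-j)}[-1]$ together with $i-j$ superdiagonal scalars $-1$, so that cycle carries conformal weight $(i-j)+1$, and summing over the cycles gives total weight $r$. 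Thus $W^{(1)}, \dots, W^{(l)}$ occupy exactly the conformal weights realized by the free generators.

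Finally I would pass to the associated graded of $\mathcal{W}^k(\mathfrak{gl}(l),(l^1))$ for the PBW/Li filtration, whose associated graded is the free supercommutative vertex Poisson algebra on $\mathfrak{g}^f$, and compute the symbol of $W^{(r)}$. The single-factor terms of $W^{(r)}$ come exactly from the length-$r$ cycles, that is, from the entries $e_{i,j}[-1]$ with $i-j = r-1$, and they assemble with a common sign into a nonzero multiple of $f^{r-1}[-1]$, which is the symbol of the free generator of weight $r$; every remaining term of $W^{(r)}$ is a product of at least two factors. Hence, within each conformal weight, the transition from $\{W^{(r)}\}$ to the free generating set is unitriangular with respect to the number of factors, so it is invertible and the $W^{(r)}$ generate $\mathcal{W}^k(\mathfrak{gl}(l),(l^1))$.

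I expect the genuine obstacle to lie in the first stage: controlling the noncommutative column-determinant $\mathrm{cdet}(B)$ under the odd differential $d_0$ of \eqref{ee1}, and in particular confirming the exact cancellation of the fermionic $\psi$-contributions it produces, is the delicate combinatorial computation. Once membership and the conformal-weight count are secured, the symbol computation and the resulting generation step are essentially formal.
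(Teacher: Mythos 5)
Your proposal is correct and follows essentially the same route as the paper, which establishes Theorem~\ref{T305} precisely this way by deferring to Theorem~2.1 of \cite{AM}: membership of the $W^{(r)}$ via the master identity $[\widetilde{d}_0^{1,0},p(\mathrm{cdet}(B))]=0$, and generation via the free-generation structure theorem together with the observation that $W^{(r)}$ is homogeneous of conformal weight $r$ with symbol a nonzero multiple of $f^{r-1}[-1]$ plus terms with at least two factors. The paper itself records only the master identity and cites \cite{AM} for the remaining details, so your two-stage reconstruction (including the cycle count giving weight $r$ and the unitriangular transition to the free generators) matches the intended argument.
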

\begin{Remark}
In \cite{AM}, the tensor algebra $T(C)$ should have been defined as a non-associative superalgebra as above since $V(\mathfrak{g}_{\leq0})$ is non-associative.
\end{Remark}
Let $A_{1,0}$ be a quotient algebra of $T(\mathfrak{a}_{1,0}[t^{-1}]t^{-1})\otimes\mathbb{C}[\tau]$ subjected to the relation
\begin{equation*}
(e_{a,1}[-1]\psi_{i,a}[-1])\text{cdet}(C^{l-i})-e_{a,1}[-1](\psi_{i,a}[-1]\text{cdet}(C^{l-i}))=0\text{ for all }1\leq a\leq i,
\end{equation*}
where $C^{l-i}$ is a submatrix of $B$ consisting of the last $(l-i)$ rows and columns.
Constructing a homomorphism
\begin{equation*}
D\colon T(\mathfrak{gl}(l)_{\leq0}[t^{-1}]t^{-1})\otimes\mathbb{C}[\tau]\to A_{1,0}
\end{equation*}
determined by
\begin{align*}
D(e_{s,u}[-1])&=\sum_{\substack{u< a\leq s}}e_{a, u}[-1]\psi_{s,a}[-1]-\sum_{\substack{u\leq a<s}}\psi_{a,u}[-1]e_{s, a}[-1]\\
&\quad+\delta(s<u)\pi\psi_{s,u}[-2]+\psi_{s,u+1}[-1]-\psi_{s-1,u}[-1],
\end{align*}
we obtain the relation $D(\text{cdet}(B))=0$ in the way similar to the one of Theorem~2.1 of \cite{AM}.

We regard $\mathfrak{gl}(m|n)$ as an associative superalgebra whose product $\cdot$ is determined by $e_{i,j}\cdot e_{s,u}=\delta_{j,s}e_{i,u}$.
Then, we obtain a non-associative superalgebra $\mathfrak{gl}(m|n)\otimes V^\kappa(\mathfrak{b})\otimes\mathbb{C}[\tau]$.
We construct a homomorphism 
\begin{equation*}
T\colon T(\mathfrak{gl}(l)_{\leq0}[t^{-1}]t^{-1})\otimes\mathbb{C}[\tau]\to \mathfrak{gl}(m|n)\otimes V^\kappa(\mathfrak{b})\otimes\mathbb{C}[\tau]
\end{equation*}
determined by
\begin{gather*}
T_{i,j}(x)={(-1)}^{p(i)}x\otimes e_{i,j}\in\mathfrak{gl}(l)_{\leq0}[t^{-1}]t^{-1}\otimes \mathfrak{gl}(m|n)=\mathfrak{b}[t^{-1}]t^{-1},\quad T(\tau)=\tau,
\end{gather*}
where $T_{i,j}(x)$ is defined as $e_{j,i}\otimes T_{i,j}(x)=T(x)$. Since $T$ is a homomorphism, we obtain
\begin{gather*}
T_{i,j}(xy)=\sum_{r=1}^{m+n}{(-1)}^{p(e_{i,r})p(e_{j,r})}T_{r,i}(x)T_{j,r}(y).
\end{gather*}
By the commutator relation of $V^\kappa(\mathfrak{b})$ and $\mathbb{C}[\tau]$, $W^{(r)}_{i,j}\in V^\kappa(\mathfrak{b})$ is defined by
\begin{equation}
T_{j,i}(\text{cdet}(B))=\sum_{r=0}^l{(-1)}^{p(j)}W^{(r)}_{i,j}(\alpha\tau)^{l-r},\label{5261}
\end{equation}
where $B$ is defined by replacing $\pi$ in \eqref{matrx} with $\alpha$. 
\begin{Theorem}\label{T306}
For all $m,n\geq 0$ such that $m\neq n$, the $W$-superalgebra $\mathcal{W}^k(\mathfrak{gl}(ml|nl),(l^{(m|n)}))$ is freely generated by $\{W^{(r)}_{i,j}\mid1\leq r\leq l,1\leq i,j\leq m+n\}$.
\end{Theorem}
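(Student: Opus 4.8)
The plan is to establish the two halves of the statement separately: that every $W^{(r)}_{i,j}$ is annihilated by $d_0$ (hence lies in $\mathcal{W}^k(\mathfrak{gl}(ml|nl),(l^{(m|n)}))$), and that the resulting family is a \emph{free} generating set.

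\textbf{Belonging.} I would deduce $d_0(W^{(r)}_{i,j})=0$ from the principal-case identity $D(\text{cdet}(B))=0$ by exhibiting $T$ as an intertwiner of the two differentials. Comparing the defining formula \eqref{ee1} for $[d_0,e_{(s-1)(m+n)+j,(t-1)(m+n)+i}[-1]]$ with the formula for $D(e_{s,u}[-1])$, one sees that the two are term-by-term parallel after matching $\pi\leftrightarrow\alpha$ and inserting the extra $\mathfrak{gl}(m|n)$-indices $i,j$, the signs ${(-1)}^{p(\cdot)}$ on the rectangular side being precisely those generated by the super-structure of $T$. Making this precise, I would extend $T$ to a homomorphism $T'$ from $A_{1,0}$ into $\mathfrak{gl}(m|n)\otimes V^{\kappa_{m,n}}(\mathfrak{a}_{m,n})\otimes\mathbb{C}[\tau]$ (now using the $\psi$-generators as well) and check the intertwining relation $\widetilde{d}_0^{m,n}\circ T=T'\circ D$ on the generators $e_{s,u}[-1]$; since both sides are odd derivations and $T,T'$ are homomorphisms, the relation then holds on all of $T(\mathfrak{gl}(l)_{\leq0}[t^{-1}]t^{-1})\otimes\mathbb{C}[\tau]$, in particular on $\text{cdet}(B)$. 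As $D(\text{cdet}(B))=0$, we obtain $\widetilde{d}_0^{m,n}(T_{j,i}(\text{cdet}(B)))=0$, and extracting the coefficient of each power of $\tau$ through \eqref{5261} (using $[\widetilde{d}_0^{m,n},\tau]=0$) yields $d_0(W^{(r)}_{i,j})=0$ for all $r,i,j$.

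\textbf{Free generation.} For this I would pass to the associated graded algebra with respect to the conformal-weight filtration, where the general structure theory of $W$-superalgebras (\cite{KW1}, \cite{KW2}, \cite{A}) gives an isomorphism $\operatorname{gr}\mathcal{W}^k(\mathfrak{gl}(ml|nl),(l^{(m|n)}))\cong S(\mathfrak{g}^f[t^{-1}]t^{-1})$ of supercommutative algebras. In particular $\mathcal{W}^k$ is freely generated by a family of fields in bijection with a basis of $\mathfrak{g}^f$, whose conformal weights are recorded by the $\ad x$-grading on $\mathfrak{g}^f$. For the rectangular nilpotent $f$, consisting of $m+n$ Jordan blocks of size $l$, the centralizer $\mathfrak{g}^f$ is spanned by $l(m+n)^2$ basis vectors and carries exactly $(m+n)^2$ of them in each conformal weight $1,2,\dots,l$; this matches the index set $\{(r,i,j)\mid 1\leq r\leq l,\ 1\leq i,j\leq m+n\}$, and by construction $W^{(r)}_{i,j}$ has conformal weight $r$. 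It therefore remains only to check that the symbols $\{\operatorname{gr}W^{(r)}_{i,j}\}$ are free generators of $S(\mathfrak{g}^f[t^{-1}]t^{-1})$; since the cardinalities and weights already agree, this reduces to the linear independence of the projections of $\operatorname{gr}W^{(r)}_{i,j}$ onto $\mathfrak{g}^f$, which I would read off from the leading (lowest-weight) term of $\text{cdet}(B)$: the relevant off-diagonal minors produce, up to lower-degree terms, a distinguished basis vector of $\mathfrak{g}^f$ for each $(r,i,j)$, so the transition is triangular and hence invertible.

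\textbf{Main obstacle.} The delicate part is the belonging step. Verifying $\widetilde{d}_0^{m,n}\circ T=T'\circ D$ on generators demands careful tracking of the super-signs ${(-1)}^{p(\cdot)}$ through the column determinant and through the non-associative products, together with a check that the associativity relations cutting out $A_{1,0}$ are respected by $T'$; these are exactly the places where a sign error would be fatal. Once the intertwining is in hand, the reduction to the already-proved principal identities (Theorem~\ref{T305} and $D(\text{cdet}(B))=0$) is formal, and the free-generation half follows from the standard graded comparison together with the dimension count for $\mathfrak{g}^f$.
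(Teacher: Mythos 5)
Your proposal is correct and follows essentially the same route as the paper: your intertwiner $T'$ is exactly the paper's homomorphism $T^p\colon \bar{A}_{1,0}\to \mathfrak{gl}(m|n)\otimes V^{\kappa_{m,n}}(\mathfrak{a}_{m,n})\otimes\mathbb{C}[\tau]$, and your belonging step (checking the intertwining relation on generators, extending by the derivation/homomorphism property, then invoking $[\bar{D},\text{cdet}(B)]=0$ from the principal case and extracting coefficients of $\tau$) is precisely the paper's proof that $[d_0,W^{(r)}_{i,j}]=0$. For the free-generation half the paper simply states that the rest is the same as \cite{AM}, and the leading-term comparison with a basis of $\mathfrak{g}^f$ via the conformal-weight count that you describe is the standard argument carried out there (and echoed in the paper's appendix via Theorem~4.1 of \cite{KW1}).
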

\begin{Remark}
In the case when $n=0$, Theorem~\ref{T306} is shown in Theorem~3.1 of \cite{AM}. 
\end{Remark}
\begin{proof}
Under the assumption that $\pi$ is equal to $\alpha$, we denote $A_{1,0}$ (resp.\ $D$) as $\bar{A}_{1,0}$ (resp.\ $\bar{D}$).
We construct a homomorphism $T^p\colon \bar{A}_{1,0}\to \mathfrak{gl}(m|n)\otimes V^{\kappa_{m,n}}(\mathfrak{a}_{m,n})\otimes \mathbb{C}[\tau]$
determined by
\begin{gather*}
T^p_{i,j}(e_{s,w}[u])={(-1)}^{p(j)}e_{(s-1)(m+n)+i,(w-1)(m+n)+j}[u],\\
T^p_{i,j}(\psi_{s,w}[u])=\psi_{(s-1)(m+n)+i,(w-1)(m+n)+j}[u],\quad T^p(\tau)=\tau,
\end{gather*}
where $T^p_{i,j}(x)$ is defined as $e_{j,i}\otimes T^p_{i,j}(x)=T^p(x)$. Since $T^p$ is a homomorphism, we obtain
\begin{gather*}
T^p_{i,j}(e_{s,w}[-1]\psi_{u,v}[-1])=\sum_{r=1}^{m+n}{(-1)}^{p(e_{i,r})p(e_{j,r})}T^p_{r,j}(e_{s,w}[-1])T^p_{i,r}(\psi_{u,v}[-1]),\\
T^p_{i,j}(\psi_{u,v}[-1]e_{s,w}[-1])=\sum_{r=1}^{m+n}{(-1)}^{p(e_{i,r})+p(e_{i,r})p(e_{j,r})}T^p_{r,j}(\psi_{u,v}[-1])T^p_{i,r}(e_{s,w}[-1]).
\end{gather*}
By the definition of $T_{j,i}$ and $d_0$, we have
\begin{align}
&\quad[\widetilde{d}_0^{m,n},T_{j,i}(e_{s,w})]\nonumber\\
&=[\widetilde{d}_0^{m,n},{(-1)}^{p(j)}e_{(s-1)(m+n)+j,(w-1)(m+n)+i}[-1]]\nonumber\\
&=\sum_{\substack{w< a\leq s,\\1\leq r\leq m+n}}{(-1)}^{p(i)+p(e_{i,r})p(e_{j,r})}e_{(a-1)(m+n)+r, (w-1)(m+n)+i}[-1]\psi_{(s-1)(m+n)+j,(a-1)(m+n)+r}[-1]\nonumber\\
&\quad-\sum_{\substack{w\leq a<s,\\1\leq r\leq m+n}}{(-1)}^{\gamma}\psi_{(a-1)(m+n)+r,(w-1)(m+n)+i}[-1]e_{(s-1)(m+n)+j, (a-1)(m+n)+r}[-1]\nonumber\\
&\quad+\delta(s<w)\alpha\psi_{(s-1)(m+n)+j,(w-1)(m+n)+i}[-2]\nonumber\\
&\quad+\psi_{s(m+n)+j,(w-1)(m+n)+i}[-1]-\psi_{(s-1)(m+n)+j,(w-2)(m+n)+i}[-1]\nonumber\\
&=T_{j,i}^p([\bar{D},e_{s,w}]),
\end{align}
where $\gamma=p(j)+p(e_{i,r})p(e_{j,r})$. 
Thus, the relation $[\widetilde{d}_0^{m,n},T_{j,i}(a)]=T_{j,i}^p([\bar{D},a])$ holds for all $a\in T(\mathfrak{gl}(l)_{\leq0})$.
Then, we obtain
\begin{equation}\label{2212}
[\widetilde{d}_0^{m,n},T_{j,i}(\text{cdet}(B))]=T_{j,i}^p([\bar{D},\text{cdet}(B)]).
\end{equation}
Since $[\bar{D},\text{cdet}(B)]=0$ holds by the proof of Theorem~2.1 of \cite{AM}, the right hand side of \eqref{2212} is equal to zero. Thus, we have obtained the relation $[d_0,W^{(r)}_{i,j}]=0$. The rest of the proof is same as \cite{AM}.
\end{proof}
In particular, by \eqref{5261}, we have
\begin{align}
W^{(1)}_{i,j}&=\displaystyle\sum_{1\leq s\leq l}\limits e_{(s-1)(m+n)+j,(s-1)(m+n)+i}[-1],\label{W1}\\
W^{(2)}_{i,j}&=\displaystyle\sum_{1\leq s\leq l-1}\limits e_{s(m+n)+j,(s-1)(m+n)+i}[-1]+\alpha\displaystyle\sum_{1\leq s\leq l}\limits (s-1)e_{(s-1)(m+n)+j,(s-1)(m+n)+i}[-2]\nonumber\\
&\quad+\displaystyle\sum_{\substack{r_1<r_2\\1\leq t\leq m+n}}\limits{(-1)}^{p(t)+p(e_{i,t})p(e_{j,t})} e^{(r_1)}_{t,i}[-1]e^{(r_2)}_{j,t}[-1],\label{W2}
\end{align}
where we set $e^{(r)}_{j,i}$ as $e_{(r-1)(m+n)+j,(r-1)(m+n)+i}$.
\begin{Theorem}\label{Tinf}
The rectangular $W$-superalgebra $\mathcal{W}^{k}(\mathfrak{gl}(ml|nl),(l^{(m|n)}))$ is generated by $W^{(1)}_{i,j}$ and $W^{(2)}_{i,j}$ $(1\leq i,j \leq m +n)$ provided that $\alpha=k+(l-1)(m-n)\neq0$, $m\neq n$ and $m+n\geq2$.
\end{Theorem}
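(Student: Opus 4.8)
The plan is to leverage Theorem~\ref{T306}, which asserts that $\mathcal{W}^{k}(\mathfrak{gl}(ml|nl),(l^{(m|n)}))$ is freely generated by the whole family $\{W^{(r)}_{i,j}\mid 1\leq r\leq l,\ 1\leq i,j\leq m+n\}$. Writing $\mathcal{A}$ for the vertex subalgebra generated by $\{W^{(1)}_{i,j},W^{(2)}_{i,j}\mid 1\leq i,j\leq m+n\}$, it then suffices to show that $W^{(r)}_{i,j}\in\mathcal{A}$ for every $r\geq 3$ and all $i,j$. I would prove this by induction on $r$, the cases $r=1,2$ being immediate from the definition of $\mathcal{A}$.

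For the inductive step I would first record the shape of $W^{(r)}_{i,j}$ coming from $\text{cdet}(B)$: up to normal-ordered products and $\partial$-derivatives of currents of strictly smaller ``block shift'', the only linear part of $W^{(r)}_{i,j}$ is $\sum_{s}e_{(s+r-2)(m+n)+j,(s-1)(m+n)+i}[-1]$, the current whose two matrix indices differ by $r-1$ blocks, a pattern already visible in \eqref{W1} and \eqref{W2} for $r=1,2$. The engine of the induction is then the computation of the product $(W^{(2)}_{i,j})_{(0)}W^{(r)}_{k,l}$ by means of the basic OPE \eqref{OPE1}. A weight count shows this element has conformal weight $r+1$, the weight of $W^{(r+1)}$, and applying the supercommutator $[e_{a,b},e_{c,d}]=\delta_{b,c}e_{a,d}-(-1)^{(p(a)+p(b))(p(c)+p(d))}\delta_{a,d}e_{c,b}$ to the two linear parts, the block shifts add: the leading linear output is exactly a block-shift-$r$ current, that is, a nonzero multiple of $W^{(r+1)}$ after the index contractions forced by the $\delta$'s. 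Every remaining term in the product pairs a linear part with a quadratic or derivative part, or two non-linear parts, and is therefore a normal-ordered product or a derivative of the $W^{(s)}$ with $s\leq r$, hence lies in $\mathcal{A}$ by the inductive hypothesis.

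Since the product itself lies in $\mathcal{A}$ and all of its terms except the $W^{(r+1)}$ term lie in $\mathcal{A}$, I conclude $W^{(r+1)}_{i,j}\in\mathcal{A}$, which closes the induction. Because the two commutator terms contract different indices, one such product may only recover certain index components of $W^{(r+1)}$; to obtain the rest I would use that $\mathcal{A}$ is stable under the zero modes $(W^{(1)}_{i,j})_{(0)}$, which act as the adjoint $\mathfrak{gl}(m|n)$-action on the block indices and therefore sweep out all components $(i,j)$ from a single one. The hypothesis $\alpha\neq0$ enters here to guarantee, via \eqref{5261}, that the $W^{(r)}_{i,j}$ are genuinely the separated coefficients of $(\alpha\tau)^{l-r}$ and that the scalar with which $W^{(r+1)}$ appears does not degenerate to zero.

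The hardest part will be the bookkeeping inside the inductive step: verifying that, after applying $(W^{(2)}_{i,j})_{(0)}$, every term other than the single $W^{(r+1)}$ current genuinely reassembles into normal-ordered products and derivatives of the lower $W^{(s)}$ rather than into some new, independent linear current of block shift $r$, and keeping track of all signs produced by the parities $p(i),p(j)$ and the super OPE. This forces a careful analysis of the quadratic part of $W^{(2)}$ in \eqref{W2} against the full $\text{cdet}(B)$ expansion defining $W^{(r)}$.
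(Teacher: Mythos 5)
Your overall skeleton---reduce to a leading-term statement via the free-generation theorem, then climb the block shift $r$ by applying modes of $W^{(2)}$---is the same one the paper uses (its Claim~\ref{T1} applies $\bigl((W^{(2)}_{i,i})_{(0)}\bigr)^{r}$ to $W^{(1)}_{j,i}$ exactly as you propose). But there is a genuine gap at the point where you write that the zero modes $(W^{(1)}_{i,j})_{(0)}$ ``act as the adjoint $\mathfrak{gl}(m|n)$-action on the block indices and therefore sweep out all components $(i,j)$ from a single one.'' The adjoint representation of $\mathfrak{gl}(m|n)$ is not irreducible: commutators of matrix units only ever produce \emph{differences} of diagonal entries, never an individual diagonal entry. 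Concretely, by the analogue of \eqref{nom}, a zero-mode bracket of leading terms gives $\delta_{i,u}(\text{shift current})_{j,v}-\pm\delta_{j,v}(\text{shift current})_{u,i}$, so every linear leading term you can reach by iterated zero modes lies in the $\mathfrak{sl}(m|n)$-part at each block-shift level: all off-diagonal components $W^{(r+1)}_{u,v}$ ($u\neq v$) and differences of diagonal ones, but never the trace component. Your induction therefore cannot close: at every level $r+1$ one component (the individual diagonal, equivalently the trace part) remains outside what you have produced, and without it the leading terms do not exhaust a basis of $\mathfrak{gl}(ml|nl)^{f}$.

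This missing component is precisely the hardest part of the paper's proof (Claim~\ref{T3}), and it is also where $\alpha\neq 0$ genuinely enters---not, as you suggest, through the bookkeeping of \eqref{5261}. The paper computes the \emph{first} mode, not the zero mode:
\begin{equation*}
(W^{(2)}_{i,i})_{(1)}\,(W^{(1)}_{i,i+1})_{(0)}\,\bigl\{(W^{(2)}_{i,i})_{(0)}\bigr\}^{r}\,W^{(1)}_{i+1,i},
\end{equation*}
whose leading linear part, by \eqref{929}, is $\alpha$ times the single diagonal block-shift-$r$ current plus $r$ times a diagonal difference already obtained in Claim~\ref{T1}, plus higher terms. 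The coefficient $\alpha$ arises from the second-order pole, i.e.\ from the inner product $\kappa$ and the $\alpha\sum_s(s-1)e^{(s)}_{i,i}[-2]$ term in \eqref{W2}; these contributions are invisible to zero modes. Dividing by $\alpha\neq0$ then isolates the diagonal component (this is also where $m+n\geq2$ is used, since the computation needs $W^{(1)}_{i,i+1}$ and $W^{(2)}_{i,i}$ with an adjacent index). Without an argument of this kind your proposal proves only that the subalgebra $\mathcal{A}$ contains elements whose leading terms span the $\mathfrak{sl}$-part of $\mathfrak{gl}(ml|nl)^{f}$, which is strictly weaker than the theorem; indeed the statement is expected to fail at $\alpha=0$, so any proof that never uses $\alpha\neq0$ in an essential way cannot be correct.
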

Theorem~\ref{Tinf} is proved in the appendix.
\begin{Remark}
In the case when $(m,n)=(1,0)$ or $(0,1)$, the elements $W^{(1)}_{i,i+1}$ or $W^{(2)}_{i,i+1}$ do not exist. This is the reason why we need the condition that $m+n\geq2$ in Theorem~\ref{Tinf}.
\end{Remark}
\section{OPEs of rectangular $W$-superalgebras}
First, let us recall the definition of the universal enveloping algebras of vertex algebras.
For all vertex algebra $V$, let $L(V)$ be the Borchards Lie algebra, that is,
\begin{align}
 L(V)=V{\otimes}\mathbb{C}[t,t^{-1}]/\text{Im}(\partial\otimes\id +\id\otimes\frac{d}{d t})\label{844},
\end{align}
where the commutation relation is given by
\begin{align*}
 [ut^a,vt^b]=\sum_{r\geq 0}\begin{pmatrix} a\\r\end{pmatrix}(u_{(r)}v)t^{a+b-r}
\end{align*}
for all $u,v\in V$ and $a,b\in \mathbb{Z}$. Now, we define the universal enveloping algebra of $V$.
\begin{Definition}[Frenkel-Zhu~\cite{FZ}, Matsuo-Nagatomo-Tsuchiya~\cite{MNT}]\label{Defi}
We set $\mathcal{U}(V)$ as the quotient algebra of the standard degreewise completion of the universal enveloping algebra of $L(V)$ by the completion of the two-sided ideal generated by
\begin{gather}
(u_{(a)}v)t^b-\sum_{i\geq 0}
\begin{pmatrix}
 a\\i
\end{pmatrix}
(-1)^i(ut^{a-i}vt^{b+i}-{(-1)}^{p(u)p(v)}(-1)^avt^{a+b-i}ut^{i}),\label{241}\\
|0\rangle t^{-1}-1,
\end{gather}
where $|0\rangle$ is the identity vector of $V$.
We call $\mathcal{U}(V)$ the universal enveloping algebra of $V$.
\end{Definition}
\begin{Lemma}[Kac-Roan-Wakimoto~\cite{KRW}, Theorem~2.4]\label{Lem1}
There exists a homomorphism from the universal enveloping algebra of $\widehat{\mathfrak{gl}}(m|n)^\kappa$ to $\mathcal{U}(\mathcal{W}^k(\mathfrak{gl}(ml|nl), (l^{(m|n)})))$ determined by
\begin{gather*}
\xi(E_{i,j}t^s)=W^{(1)}_{j,i}t^s,\quad\xi(\tilde{c})=l\alpha t^{-1},\quad\xi(x)=1.
\end{gather*}
\end{Lemma}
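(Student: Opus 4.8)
The plan is to reduce the statement to the universal property of the enveloping algebra: it suffices to produce a Lie superalgebra homomorphism from $\widehat{\mathfrak{gl}}(m|n)^\kappa$ into $\mathcal{U}(\mathcal{W}^k(\mathfrak{gl}(ml|nl),(l^{(m|n)})))$, regarded as a Lie superalgebra under the supercommutator, sending $E_{i,j}t^s\mapsto W^{(1)}_{j,i}t^s$, $\tilde c\mapsto l\alpha$ and $x\mapsto 1$. By Theorem~\ref{T306} each $W^{(1)}_{i,j}$ lies in the $W$-superalgebra, so these are genuine elements of $\mathcal{U}$; moreover $l\alpha t^{-1}=l\alpha\,|0\rangle t^{-1}=l\alpha$ and $1$ are central by the defining relations of Definition~\ref{Defi}, so the bracket relations involving the central elements $\tilde c$ and $x$ hold automatically. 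The entire content is therefore to check that $[W^{(1)}_{j,i}t^a,\,W^{(1)}_{l,k}t^b]$, computed in $\mathcal{U}$, matches $\xi$ applied to the defining bracket of $\widehat{\mathfrak{gl}}(m|n)^\kappa$.

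The key step is the OPE $W^{(1)}_{j,i}(z)\,W^{(1)}_{l,k}(w)$. Writing $W^{(1)}_{j,i}=\sum_{s=1}^{l}e^{(s)}_{i,j}[-1]$ as in \eqref{W1} and using that each $e^{(s)}_{i,j}$ lies in the diagonal block $\mathfrak{g}_0\subset\mathfrak{b}$, bilinearity of the OPE \eqref{OPE1} reduces everything to the pairwise products $e^{(s)}_{i,j}(z)\,e^{(t)}_{k,l}(w)$; fields from distinct blocks ($s\neq t$) commute and have no central pairing, so only the terms with $s=t$ survive. Summing these over $s$, one finds that the field OPE has only a first- and a second-order pole. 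The first-order part is $\delta_{j,k}W^{(1)}_{l,i}-(-1)^{(p(i)+p(j))(p(k)+p(l))}\delta_{i,l}W^{(1)}_{j,k}$, which is precisely $\xi([E_{i,j},E_{k,l}])$, while the second-order (scalar) part is $B=l\,\delta_{i,l}\delta_{j,k}(-1)^{p(i)}\alpha-\delta_{i,j}\delta_{k,l}(-1)^{p(i)+p(k)}l(lc-1)$, read off from the explicit form of $\kappa$ recorded just before Definition~\ref{T125}.

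With the OPE in hand, the commutator in $\mathcal{U}$ is obtained from the Borcherds formula encoded in \eqref{241}: $[W^{(1)}_{j,i}t^a,W^{(1)}_{l,k}t^b]=\xi([E_{i,j},E_{k,l}])t^{a+b}+a\,\delta_{a+b,0}\,B\cdot 1$, where I have used $|0\rangle t^{-1}=1$ together with the vanishing of $|0\rangle t^{c}$ for $c\neq-1$ coming from \eqref{844}. Comparing with the defining relations of $\widehat{\mathfrak{gl}}(m|n)^\kappa$, the $\delta_{j,k}\delta_{i,l}$-term of $B$ reproduces $a\,\delta_{a+b,0}\,\str(E_{i,j}E_{k,l})\,\tilde c$ under $\tilde c\mapsto l\alpha$, while the $\delta_{i,j}\delta_{k,l}$-term reproduces the extra summand $-a\,\delta_{a+b,0}\,l(lc-1)(-1)^{p(i)+p(k)}x$ that appears only in the diagonal relation $[E_{i,i}t^a,E_{j,j}t^b]$, under $x\mapsto 1$. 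This verifies all relations, and the universal property of $U(\widehat{\mathfrak{gl}}(m|n)^\kappa)$ then yields the homomorphism $\xi$.

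The main obstacle is bookkeeping rather than conceptual: tracking the Koszul signs $(-1)^{p(\bullet)}$ through both the supercommutator in $\mathfrak{gl}(ml|nl)$ and the Borcherds bracket, and in particular seeing that the single scalar $B$ splits cleanly into the two distinct cocycle contributions — the supertrace cocycle carried by $\tilde c$ and the genuinely $\mathfrak{gl}$-type cocycle carried by $x$ with coefficient $l(lc-1)$. Ensuring the normalization $\xi(\tilde c)=l\alpha$ with $\alpha=k+(l-1)(m-n)$ is exactly what the second-order pole forces, so accuracy in that parameter matching is the delicate point of the argument.
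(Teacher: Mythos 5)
The paper offers no written proof of this lemma: it is imported wholesale from Kac--Roan--Wakimoto \cite{KRW}, whose Theorem~2.4 asserts that the weight-one fields attached to the centralizer of the nilpotent (here $\mathfrak{g}^\natural\cong\mathfrak{gl}(m|n)$, realized by the $W^{(1)}_{i,j}$ of \eqref{W1}) span an affine vertex subalgebra of $\mathcal{W}^k$ at a shifted cocycle, after which one applies the functor $\mathcal{U}$ and the Borcherds bracket \eqref{844}, \eqref{241}. Your blind verification makes exactly this explicit: computing $(W^{(1)}_{j,i})_{(0)}W^{(1)}_{l,k}$ and $(W^{(1)}_{j,i})_{(1)}W^{(1)}_{l,k}$ from \eqref{W1} and \eqref{OPE1}, noting the vanishing of all higher products, converting to commutators of modes via \eqref{241} with $|0\rangle t^{c}=0$ for $c\neq-1$, and closing with the universal property. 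The first-order pole, the identifications $\xi(\tilde c)=l\alpha$, $\xi(x)=1$, and your final scalar $B=l\,\delta_{i,l}\delta_{j,k}{(-1)}^{p(i)}\alpha-\delta_{i,j}\delta_{k,l}{(-1)}^{p(i)+p(k)}l(lc-1)$ all match the defining relations of $\widehat{\mathfrak{gl}}(m|n)^\kappa$, so the argument is correct in substance and is a legitimate self-contained replacement for the citation.

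There is, however, one incorrect step in your justification, which happens not to propagate to your final formula. You assert that fields from distinct blocks ``have no central pairing, so only the terms with $s=t$ survive.'' This is false: by the displayed formula for $\kappa$ preceding Definition~\ref{T125}, for block-diagonal elements one has $\kappa(e^{(s)}_{i,i},e^{(t)}_{k,k})=-{(-1)}^{p(i)+p(k)}(c-\delta_{s,t})$, and the conditions $\delta_{s_1,t_1}\delta_{s_2,t_2}$ in the second term of $\kappa$ are internal to each argument, not cross-block constraints; hence for $c\neq0$ the pairs with $s\neq t$ contribute. Had only the $s=t$ terms survived, the diagonal part of $B$ would carry the coefficient $l(c-1)$, whereas the coefficient $l(lc-1)=l^2c-l$ you actually wrote --- and which is what the relation $[E_{i,i}t^a,E_{j,j}t^b]$ in $\widehat{\mathfrak{gl}}(m|n)^\kappa$ demands --- arises precisely from summing $(c-\delta_{s,t})$ over all $l^2$ pairs $(s,t)$. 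So your stated reasoning contradicts the formula it is meant to produce. The slip is invisible in the paper's application (Theorem~\ref{Main} fixes $c=0$, where $l(lc-1)=-l=l(c-1)$), but Lemma~\ref{Lem1} and $\widehat{\mathfrak{gl}}(m|n)^\kappa$ are stated for arbitrary $c$; once that sentence is corrected to include the cross-block central pairings, your proof goes through as written.
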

In order to construct a homomorphism from the affine super Yangian to the universal enveloping algebra of $W$-superalgebras in Section~6, we need to compute the following terms; 
\begin{gather*}
(W^{(1)}_{i,j})_{(u)}W^{(2)}_{s,t}\ (u\geq0),\quad(W^{(2)}_{i,i})_{(0)}W^{(2)}_{j,j},\quad(W^{(2)}_{i,i})_{(1)}W^{(2)}_{j,j}.
\end{gather*}
First, we compute $(W^{(1)}_{i,j})_{(u)}W^{(2)}_{s,t}\ (u\geq0)$. By direct computation, we obtain the below two lemmas. We only give the proof of the first equation of Lemma~\ref{Lem3}.
\begin{Lemma}\label{Lem2}
We obtain
\begin{gather*}
(W^{(1)}_{u,v})_{(0)}W^{(2)}_{i,j}=\delta_{j,u}W^{(2)}_{i,v}-\delta_{i,v}{(-1)}^{p(e_{u,v})p(e_{i,j})}W^{(2)}_{u,j}.
\end{gather*}
\end{Lemma}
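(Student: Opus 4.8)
The plan is to compute the zeroth product directly, exploiting the fact that $(W^{(1)}_{u,v})_{(0)}$ is an (even or odd) derivation of the whole vertex-algebra structure. First I would record two structural facts. Since $(W^{(1)}_{u,v})_{(0)}$ is the zeroth product, it commutes with $\partial$ and acts as a derivation of the normally ordered product: for homogeneous $a,b\in V^\kappa(\mathfrak{b})$,
\[
(W^{(1)}_{u,v})_{(0)}(a\cdot b)=\big((W^{(1)}_{u,v})_{(0)}a\big)\cdot b+(-1)^{p(e_{u,v})p(a)}\,a\cdot\big((W^{(1)}_{u,v})_{(0)}b\big).
\]
Moreover, by \eqref{OPE1} the central $\kappa$-term contributes only to the first product, so on a single current one has $(W^{(1)}_{u,v})_{(0)}(x[-s])=[W^{(1)}_{u,v},x][-s]$ for every $x\in\mathfrak{b}$ and $s\geq1$, with no central correction. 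Using $\mathfrak{gl}(ml|nl)\cong\mathfrak{gl}(m|n)\otimes\mathfrak{gl}(l)$ and $W^{(1)}_{u,v}=\sum_s e^{(s)}_{v,u}=e_{v,u}\otimes\mathrm{id}_{\mathfrak{gl}(l)}$ from \eqref{W1}, the $\mathfrak{gl}(l)$-factor is the identity and contributes nothing to the bracket, so the action reduces to $\ad(e_{v,u})$ on the $\mathfrak{gl}(m|n)$-indices,
\[
[W^{(1)}_{u,v},e_{a,b}\otimes e_{s,t}]=\big(\delta_{u,a}e_{v,b}-(-1)^{p(e_{u,v})p(e_{a,b})}\delta_{v,b}e_{a,u}\big)\otimes e_{s,t}.
\]

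Next I would apply this derivation to the three summands of $W^{(2)}_{i,j}$ in \eqref{W2} separately. For the two linear summands (the $[-1]$ term and the $\alpha\,[-2]$ term) the action is immediate from the displayed bracket, since the $\mathfrak{gl}(l)$-factors are untouched; using $[e_{v,u},e_{j,i}]=\delta_{u,j}e_{v,i}-(-1)^{p(e_{u,v})p(e_{i,j})}\delta_{v,i}e_{j,u}$ these two summands reproduce, block by block, the corresponding linear summands of $\delta_{j,u}W^{(2)}_{i,v}$ and $-\delta_{i,v}(-1)^{p(e_{u,v})p(e_{i,j})}W^{(2)}_{u,j}$. For the quadratic summand $\sum_{r_1<r_2}\sum_{t}(-1)^{p(t)+p(e_{i,t})p(e_{j,t})}e^{(r_1)}_{t,i}[-1]\cdot e^{(r_2)}_{j,t}[-1]$ the Leibniz rule produces four families of terms, one for each of the four $\delta$'s appearing in the two brackets. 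The two families in which $\ad(e_{v,u})$ hits an external index, namely $j$ (via $\delta_{u,j}$ in the second factor) or $i$ (via $\delta_{v,i}$ in the first factor), reassemble after a short parity manipulation into the quadratic parts of $W^{(2)}_{i,v}$ and $W^{(2)}_{u,j}$ with exactly the coefficients predicted by the statement; concretely one checks, using $p(e_{a,b})=p(a)+p(b)$ and $p(e_{t,i})=p(e_{i,t})$, the identity $p(e_{i,t})p(e_{u,t})+p(e_{v,u})p(e_{t,i})\equiv p(e_{i,t})p(e_{v,t})$ modulo $2$ on the $\delta_{u,j}$ family.

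The main obstacle, and the only step that is not completely mechanical, is the remaining two families, in which $\ad(e_{v,u})$ acts on the contracted index $t$ (via $\delta_{u,t}$ in the first factor and $\delta_{v,t}$ in the second). Setting $t=u$ and $t=v$ respectively, both families collapse to multiples of the single monomial $e^{(r_1)}_{v,i}[-1]\cdot e^{(r_2)}_{j,u}[-1]$, which is \emph{not} of the form occurring in any $W^{(2)}$, so they must cancel. I would verify that their two prefactors are exactly opposite by checking the parity identity
\[
p(u)+p(e_{i,u})p(e_{j,u})\equiv p(v)+p(e_{i,v})p(e_{j,v})+p(e_{u,v})\big(p(e_{v,i})+p(e_{j,v})\big)\pmod 2,
\]
where the last summand on the right records the Leibniz sign $p(e_{u,v})(p(v)+p(i))$ of acting on the second factor together with the bracket sign $p(e_{u,v})p(e_{j,v})$. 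Expanding both sides (and using $x^2\equiv x$) shows each equals $p(i)p(j)+p(i)p(u)+p(u)p(j)$, so the magnitudes agree; combined with the opposite overall sign coming from the $\delta_{v,t}$ bracket this forces the two families to cancel.

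Collecting the surviving contributions from the three summands of $W^{(2)}_{i,j}$ then yields precisely $\delta_{j,u}W^{(2)}_{i,v}-\delta_{i,v}(-1)^{p(e_{u,v})p(e_{i,j})}W^{(2)}_{u,j}$, which is the claimed identity. Everything is a direct computation; the delicate point is purely the sign bookkeeping in the quadratic part, and in particular the cancellation of the contracted-index cross terms, which is what the parity identity above guarantees.
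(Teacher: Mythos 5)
Your proof is correct, and it coincides with the paper's approach: the paper omits the proof of Lemma~\ref{Lem2} entirely, saying only that it follows ``by direct computation,'' and your argument is exactly that computation — using that $(W^{(1)}_{u,v})_{(0)}$ is a superderivation of the normally ordered product with no central correction at the zeroth product (the $\kappa$-term enters only at the first product), identifying $W^{(1)}_{u,v}$ with $e_{v,u}\otimes\mathrm{id}_{\mathfrak{gl}(l)}$ so the action is $\ad(e_{v,u})$ on the $\mathfrak{gl}(m|n)$-indices, matching the two linear summands of \eqref{W2} termwise, and handling the quadratic summand by the four-family Leibniz expansion. All the sign bookkeeping you flag checks out: on the $\delta_{u,j}$ family the congruence reduces to $p(e_{j,t})+p(e_{j,v})\equiv p(e_{v,t})$, which holds identically, and both sides of your cancellation identity for the contracted-index families reduce to $p(i)p(j)+p(i)p(u)+p(u)p(j)$ modulo $2$, so the two cross-term families (which indeed produce monomials $e^{(r_1)}_{v,i}[-1]e^{(r_2)}_{j,u}[-1]$ not occurring in any $W^{(2)}$) cancel exactly and nothing is missing.
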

\begin{Lemma}\label{Lem3}
The following equations hold;
\begin{gather*}
(W^{(1)}_{v,w})_{(1)}W^{(2)}_{i,j}=\delta_{j,v}(l-1)\alpha W^{(1)}_{i,w}-\delta_{v,w}{(-1)}^{p(w)}(l-1)(lc-1)W^{(1)}_{i,j},\\
(W^{(1)}_{v,w})_{(2)}W^{(2)}_{i,j}=l(l-1)\alpha\kappa(e_{w,v},e_{j,i}),\\
(W^{(1)}_{v,w})_{(s)}W^{(2)}_{i,j}=0\ (\text{for all }s\geq3).
\end{gather*}
\end{Lemma}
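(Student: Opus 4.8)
The plan is to compute the products $(W^{(1)}_{v,w})_{(s)}W^{(2)}_{i,j}$ directly from the explicit formulas \eqref{W1} and \eqref{W2}, exploiting the fact that $W^{(1)}_{v,w}$ is itself a current. Writing $\mathbf{E}_{w,v}=\sum_{s=1}^{l}e^{(s)}_{w,v}\in\mathfrak{g}_0\subset\mathfrak{b}$ for the diagonal embedding $E_{w,v}\otimes I_l$, formula \eqref{W1} says exactly $W^{(1)}_{v,w}=\mathbf{E}_{w,v}[-1]$. Since in an affine vertex algebra the non-negative modes of a current coincide with the action of the corresponding elements of the affine Lie superalgebra, we have $(W^{(1)}_{v,w})_{(s)}=\mathbf{E}_{w,v}\otimes t^{s}$ acting on $V^\kappa(\mathfrak{b})$ for every $s\geq0$. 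Thus the whole computation reduces to applying $\mathbf{E}_{w,v}\otimes t^{s}$ to $W^{(2)}_{i,j}\in U(\mathfrak{b}[t^{-1}]t^{-1})$ using only the commutation relations of $\hat{\mathfrak{b}}$, namely $[a\otimes t^{s},b\otimes t^{-r}]=[a,b]\otimes t^{s-r}+\delta_{s,r}\,s\,\kappa(a,b)$ together with $\mathbf{E}_{w,v}\otimes t^{s}\cdot 1=0$ for $s\geq0$. The case $s=0$ is a pure adjoint action and recovers Lemma~\ref{Lem2}; the content here is $s\geq1$.

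I would treat the three summands of $W^{(2)}_{i,j}$ in \eqref{W2} separately: the off-diagonal linear part lying in $\mathfrak{g}_{-1}$, the $[-2]$-part weighted by $\alpha(s-1)$, and the quadratic part $\sum_{r_1<r_2,\,t}e^{(r_1)}_{t,i}[-1]\,e^{(r_2)}_{j,t}[-1]$ (up to the indicated signs). For the quadratic part I would use the super Leibniz rule for the action of a single mode on a product of two generators, which yields one double-bracket term and two single Wick contractions carrying factors $\kappa(\mathbf{E}_{w,v},\cdot)$. The vanishing for $s\geq3$ then follows by a degree count: $W^{(2)}_{i,j}$ involves only generators of $t$-degree $-1$ and $-2$, so for $s\geq3$ no central contraction can fire (there is no generator of degree $-s$), the single available two-step contraction in the quadratic part requires mode $1$ hence $s=2$, and every other surviving bracket leaves a strictly positive mode that annihilates the vacuum.

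For $s=2$ only scalar (central) contributions survive: the $[-2]$-part gives $2\alpha\sum_{s}(s-1)\kappa(\mathbf{E}_{w,v},e^{(s)}_{j,i})$ and the quadratic part gives a double contraction $\sum_{r_1<r_2,\,t}\kappa([\mathbf{E}_{w,v},e^{(r_1)}_{t,i}],e^{(r_2)}_{j,t})$; evaluating these with the explicit formula for $\kappa$ and summing over the block indices (using $2\sum_{s=1}^{l}(s-1)=l(l-1)$) reassembles the pairing and yields $l(l-1)\alpha\kappa(e_{w,v},e_{j,i})$. For $s=1$ the result is a weight-one element: the $[-2]$-part produces $\alpha\sum_{s}(s-1)[\mathbf{E}_{w,v},e^{(s)}_{j,i}][-1]$, while the two Wick contractions of the quadratic part produce terms whose block sums count $\sum_{r_2>r_1}1=l-r_1$ and $\sum_{r_1<r_2}1=r_2-1$; the two terms of the $\kappa$-formula separate out the $\delta_{j,v}\alpha$- and $\delta_{v,w}(lc-1)$-structures. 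The key algebraic point, and the main obstacle, is that the explicit weight $(s-1)$ carried by the $[-2]$-term in \eqref{W2} is precisely what is needed so that $(s-1)+(l-s)=l-1$, making every block-weighted sum telescope into the uniform currents $W^{(1)}_{i,w}$ and $W^{(1)}_{i,j}$ with coefficients $(l-1)\alpha$ and $(l-1)(lc-1)$. The part requiring the most care is bookkeeping the super-signs $(-1)^{p(\cdot)}$ coming from the Leibniz rule and from the quadratic coefficients of \eqref{W2}, and verifying that the $[-2]$-contribution and the quadratic contractions add rather than cancel, so that the telescoping produces exactly the coefficients $(l-1)\alpha$, $(l-1)(lc-1)$ and $l(l-1)\alpha$.
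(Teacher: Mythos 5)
Your proposal is correct and is essentially the paper's own approach: the paper asserts Lemma~\ref{Lem3} ``by direct computation'' and omits the details, and your computation --- identifying $(W^{(1)}_{v,w})_{(s)}$ for $s\geq0$ with the action of $\mathbf{E}_{w,v}\otimes t^{s}\in\hat{\mathfrak{b}}$ on $V^{\kappa}(\mathfrak{b})$ and applying it term by term to the three summands of \eqref{W2} --- is exactly that computation. The degree count for $s\geq3$, the central contractions for $s=2$, and the telescoping $(s-1)+(l-s)=l-1$ producing the coefficients $(l-1)\alpha$ and $(l-1)(lc-1)$ for $s=1$ all check out.
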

\begin{proof}[\textbf{proof of the first equation of Lemma~\ref{Lem3}}]
By the definition of $W^{(1)}_{v,w}$ and $W^{(2)}_{i,j}$, we obtain
\begin{align}
&\quad(W^{(1)}_{v,w})_{(1)}W^{(2)}_{i,j}\nonumber\\
&=\sum_{\substack{1\leq r_1<r_2\leq l\\1\leq t\leq m+n\\1\leq s\leq l}}{(-1)}^{p(i)+p(e_{i,j})p(e_{i,t})}\kappa(e^{(s)}_{w,v},e^{(r_1)}_{t,i})e^{(r_2)}_{j,t}[-1]\nonumber\\
&\quad+\sum_{\substack{1\leq r_1<r_2\leq l\\1\leq t\leq m+n\\1\leq s\leq l}}{(-1)}^{p(i)+p(e_{i,j})p(e_{i,t})+p(e_{w,v})p(e_{i,t})}\kappa(e^{(s)}_{w,v},e^{(r_2)}_{j,t})e^{(r_1)}_{t,i}[-1]\nonumber\\
&\quad+\alpha\sum_{1\leq s,t\leq l}(t-1)[e^{(s)}_{w,v},e^{(t)}_{j,i}][-1].\label{5240}
\end{align}
Let us compute each terms of the right hand side of \eqref{5240}. Since we obtain
\begin{equation*}
\kappa(e^{(s)}_{w,v},e^{(r_1)}_{t,i})=\delta_{r_1,s}\delta_{i,w}\delta_{v,t}{(-1)}^{p(i)}\alpha+(c-\delta_{r_1,s})\delta_{v,w}\delta_{t,i}{(-1)}^{p(w)+p(i)}
\end{equation*}
the first term of \eqref{5240} is equal to
\begin{align}
&\sum_{\substack{1\leq r_1<r_2\leq l}}{(-1)}^{p(e_{i,j})p(e_{i,v})}\delta_{i,w}\alpha e^{(r_2)}_{j,v}[-1]\nonumber\\
&\qquad-\sum_{\substack{1\leq r_1<r_2\leq l\\1\leq s\leq l}}{(-1)}^{p(w)+p(e_{i,j})p(e_{i,i})}(c-\delta_{r_1,s})\delta_{v,w}e^{(r_2)}_{j,i}[-1].\label{5249}
\end{align}
By a direct computation, we obtain
\begin{align*}
&\quad\text{the first term of \eqref{5249}}\\
&=\sum_{\substack{1\leq r_2\leq l}}(r_2-1){(-1)}^{p(e_{i,j})p(e_{i,v})}\delta_{i,w}\alpha e^{(r_2)}_{j,v}[-1],\\
&\quad\text{the second term of \eqref{5249}}\\
&=\sum_{\substack{1\leq r_2\leq l}}{(-1)}^{p(w)}(r_2-1)\delta_{v,w}lce^{(r_2)}_{j,i}+\sum_{\substack{1\leq r_2\leq l}}{(-1)}^{p(w)}(r_2-1)\delta_{v,w}e^{(r_2)}_{j,i}[-1]\\
&=\sum_{\substack{1\leq r_2\leq l}}{(-1)}^{p(w)}(r_2-1)\delta_{v,w}(lc-1)e^{(r_2)}_{j,i}[-1].
\end{align*}
Thus, we obtain
\begin{align}
&\quad\text{the first term of \eqref{5241}}\nonumber\\
&=\sum_{\substack{1\leq r_2\leq l}}(r_2-1){(-1)}^{p(e_{i,j})p(e_{i,v})}\delta_{i,w}\alpha e^{(r_2)}_{j,v}[-1]-\sum_{\substack{1\leq r_2\leq l}}{(-1)}^{p(w)}(r_2-1)\delta_{v,w}(lc-1)e^{(r_2)}_{j,i}[-1].\label{5241}
\end{align}
Similarly to the first term, we find that the second term of \eqref{5240} is equal to
\begin{align}
&\sum_{\substack{1\leq r_1<r_2\leq l}}{(-1)}^{p(i)+p(e_{i,j})p(e_{i,w})+p(e_{w,i})p(e_{w,v})}\delta_{j,v}\alpha{(-1)}^{p(w)}e^{(r_1)}_{w,i}[-1]\nonumber\\
&\qquad+\sum_{\substack{1\leq r_1<r_2\leq l}}{(-1)}^{p(i)+p(e_{i,j})p(e_{i,j})+p(e_{j,i})p(e_{v,w})}\delta_{v,w}(c-\delta_{r_1,s}){(-1)}^{p(w)+p(j)}e^{(r_1)}_{j,i}[-1]\nonumber\\
&=\sum_{\substack{1\leq r_1\leq l}}\alpha(l-r_1)\delta_{j,v}e^{(r_1)}_{w,i}[-1]-\sum_{\substack{1\leq r_1\leq l}}{(-1)}^{p(w)}(l-r_1)\delta_{v,w}(lc-1)e^{(r_1)}_{j,i}[-1].\label{5242}
\end{align}
By a direct computation, the third term of \eqref{5240} is equal to 
\begin{align}
\sum_{1\leq t\leq l}\alpha(t-1)\delta_{j,v}e^{(t)}_{w,i}-\alpha\sum_{1\leq t\leq l}(t-1)\delta_{w,i}{(-1)}^{p(e_{v,w})p(e_{i,j})}e^{(t)}_{j,v}.\label{5243}
\end{align}
Adding \eqref{5240}, \eqref{5241}, and \eqref{5242}, we have obtained the statement.
\end{proof}
\begin{Corollary}\label{COR}
The following equation holds;
\begin{align*}
&\quad[W^{(1)}_{v,w}t^s, W^{(2)}_{i,j}t^u]\\
&=\delta_{j,v}W^{(2)}_{i,w}t^{s+u}-\delta_{i,w}{(-1)}^{p(e_{v,w})p(e_{i,j})}W^{(2)}_{v,j}t^{s+u}\\
&\quad+\delta_{j,v}s(l-1)\alpha W^{(1)}_{i,w}t^{s+u-1}-\delta_{v,w}{(-1)}^{p(w)}(l-1)(lc-1)sW^{(1)}_{i,j}t^{s+u-1}\\
&\quad+\dfrac{s(s-1)}{2}l(l-1)\alpha\kappa(e_{w,v},e_{j,i})t^{s+u-2}.
\end{align*}
\end{Corollary}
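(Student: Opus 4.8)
The plan is to read the bracket off directly from the Borcherds Lie algebra structure recalled in \eqref{844}, applied to the vertex algebra $V=V^\kappa(\mathfrak{b})$ with the two elements $W^{(1)}_{v,w}$ and $W^{(2)}_{i,j}$. Concretely, the commutation relation of $L(V)$ in \eqref{844}, specialized to these two vectors and to the exponents $s$ and $u$, reads
\begin{equation*}
[W^{(1)}_{v,w}t^s, W^{(2)}_{i,j}t^u]=\sum_{r\geq 0}\binom{s}{r}\big((W^{(1)}_{v,w})_{(r)}W^{(2)}_{i,j}\big)t^{s+u-r}.
\end{equation*}
Thus the whole computation is reduced to evaluating the right-hand side, term by term, using the $r$-th products already determined in the preceding lemmas.

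First I would invoke the last identity of Lemma~\ref{Lem3}, namely $(W^{(1)}_{v,w})_{(r)}W^{(2)}_{i,j}=0$ for all $r\geq 3$, which truncates the otherwise infinite sum to the three terms $r=0,1,2$. I would then substitute the surviving $r$-th products: the $r=0$ contribution is supplied by Lemma~\ref{Lem2}, while the $r=1$ and $r=2$ contributions are the first two identities of Lemma~\ref{Lem3}. Attaching the binomial coefficients $\binom{s}{0}=1$, $\binom{s}{1}=s$, $\binom{s}{2}=\tfrac{s(s-1)}{2}$ and the appropriate power of $t$ to each, and then collecting, immediately reproduces the five summands in the statement: the two $W^{(2)}$-terms with factor $t^{s+u}$ from $r=0$, the two $W^{(1)}$-terms with factor $s\,t^{s+u-1}$ from $r=1$, and the scalar $\kappa$-term with factor $\tfrac{s(s-1)}{2}t^{s+u-2}$ from $r=2$.

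Since the argument reduces to collecting exactly three nonzero contributions, there is essentially no obstacle beyond bookkeeping; all substantive content lies in the already-established Lemmas~\ref{Lem2} and~\ref{Lem3}. The one point that warrants a word of care is that the bracket in the statement is the (super)commutator inside $\mathcal{U}(\mathcal{W}^{k}(\mathfrak{gl}(ml|nl),(l^{(m|n)})))$, which coincides with the Lie bracket of the Borcherds Lie algebra $L(V)$ in \eqref{844}: indeed $\mathcal{U}(V)$ is constructed in Definition~\ref{Defi} as a quotient of a completion of $U(L(V))$, so the commutator of two elements of $L(V)$ is inherited unchanged, and the Borcherds formula applies verbatim without any sign ambiguity.
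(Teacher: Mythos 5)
Your proof is correct and is exactly the derivation the paper intends: the Corollary is stated without proof immediately after Lemmas~\ref{Lem2} and~\ref{Lem3}, and the intended argument is precisely to expand $[W^{(1)}_{v,w}t^s, W^{(2)}_{i,j}t^u]$ via the Borcherds bracket of \eqref{844}, truncate at $r=2$ by the vanishing statement of Lemma~\ref{Lem3}, and substitute the $r=0,1,2$ products with the coefficients $\binom{s}{0},\binom{s}{1},\binom{s}{2}$. Your closing remark that the bracket in $\mathcal{U}(V)$ is inherited from $L(V)$ (since $\mathcal{U}(V)$ is a quotient of a completion of $U(L(V))$) correctly handles the only point that needed care.
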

The following assertion is also shown by direct calculation. We omit the proof.
\begin{Lemma}\label{Lem4}
We obtain
\begin{align*}
(W^{(2)}_{i,i})_{(0)}W^{(2)}_{j,j}&={(-1)}^{p(i)}(W^{(1)}_{i,j})_{(-1)}W^{(2)}_{j,i}-{(-1)}^{p(j)}(W^{(1)}_{j,i})_{(-1)}W^{(2)}_{i,j}-(\delta_{i,j}\alpha+{(-1)}^{p(i)})\partial W^{(2)}_{j,j}\nonumber\\
&\quad+{(-1)}^{p(j)}(l-1)\alpha(W^{(1)}_{j,i})_{(-1)}\partial W^{(1)}_{i,j}-\{(l-1)^2c-(l-1)\}(W^{(1)}_{j,j})_{(-1)}\partial W^{(1)}_{i,i}\\
&\quad+\delta_{i,j}\dfrac{l(l-1)}{2}\alpha^2\partial^2W^{(1)}_{i,i}+{(-1)}^{p(j)}\dfrac{l(l-1)}{2}\alpha\partial^2W^{(1)}_{i,i}-{(-1)}^{p(j)}\dfrac{l(l-1)^2}{2}c\alpha\partial^2W^{(1)}_{i,i}\\
&\quad+\frac{1}{2}{(-1)}^{p(i)}(l-1)\alpha\partial^2W^{(1)}_{j,j}-\frac{1}{2}{(-1)}^{p(j)}(l-1)\alpha\partial^2W^{(1)}_{i,i}
\end{align*}
and
\begin{align*}
(W^{(2)}_{i,i})_{(1)}W^{(2)}_{j,j}&=-\{(l-1)^2c-(l-1)\}(W^{(1)}_{j,j})_{(-1)}W^{(1)}_{i,i}-2\delta_{i,j}\alpha W^{(2)}_{i,i}-{(-1)}^{p(i)} W^{(2)}_{j,j}\\
&\quad-{(-1)}^{p(j)} W^{(2)}_{i,i}+{(-1)}^{p(j)}(l-1)\alpha(W^{(1)}_{j,i})_{(-1)}W^{(1)}_{i,j}+\delta_{i,j}l(l-1)\alpha^2\partial W^{(1)}_{i,i}\\
&\quad+{(-1)}^{p(j)}l(l-1)\alpha\partial W^{(1)}_{i,i}-{(-1)}^{p(j)}l(l-1)^2c\alpha\partial W^{(1)}_{i,i}\\
&\quad+{(-1)}^{p(i)}(l-1)\alpha\partial W^{(1)}_{j,j}-{(-1)}^{p(j)}(l-1)\alpha\partial W^{(1)}_{i,i}.
\end{align*}
\end{Lemma}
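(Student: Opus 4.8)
The plan is to obtain the two products $(W^{(2)}_{i,i})_{(0)}W^{(2)}_{j,j}$ and $(W^{(2)}_{i,i})_{(1)}W^{(2)}_{j,j}$ as the coefficients of $(z-w)^{-1}$ and $(z-w)^{-2}$, respectively, in the singular part of the OPE $W^{(2)}_{i,i}(z)W^{(2)}_{j,j}(w)$. By \eqref{W2} the field $W^{(2)}_{i,j}$ is built only from the currents $e_{a,b}$ and not from the $\psi$'s, so the entire computation takes place inside the affine vertex algebra $V^\kappa(\mathfrak{b})$, whose generating fields obey the OPE \eqref{OPE1}. Since $W^{(2)}_{i,i}$ and $W^{(2)}_{j,j}$ are elements of $V^\kappa(\mathfrak{b})$, reading off these two coefficients and regrouping the result is all that is required; closedness inside the $W$-superalgebra is already guaranteed by Theorem~\ref{T306}.

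First I would split each generator according to \eqref{W2} into three pieces: the degree-one raising current $L_i=\sum_{1\leq s\leq l-1}e_{s(m+n)+i,(s-1)(m+n)+i}[-1]$, the $\alpha$-weighted derivative term $D_i=\alpha\sum_{1\leq s\leq l}(s-1)e^{(s)}_{i,i}[-2]$ (which equals $\alpha\,\partial$ applied to a $(s-1)$-weighted version of $W^{(1)}_{i,i}$, using $a[-2]=\partial(a[-1])$), and the normally ordered quadratic term $Q_i=\sum_{r_1<r_2,\,t}{(-1)}^{p(i)}e^{(r_1)}_{t,i}[-1]e^{(r_2)}_{i,t}[-1]$. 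By bilinearity of the $n$-th products, the OPE of $W^{(2)}_{i,i}$ with $W^{(2)}_{j,j}$ then breaks into the $3\times3$ pairwise OPEs $X_i(z)Y_j(w)$ with $X,Y\in\{L,D,Q\}$, and I would compute each of the nine contributions separately, retaining only the first- and second-order poles.

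The building block for every contribution is the current OPE \eqref{OPE1}, namely $e_{a,b}(z)e_{c,d}(w)\sim\frac{[e_{a,b},e_{c,d}](w)}{z-w}+\frac{\kappa(e_{a,b},e_{c,d})}{(z-w)^2}$, with $\kappa$ read from the explicit formula displayed in Section~3, together with the translation rules $(\partial a)(z)=\partial_z a(z)$ and $a[-2]=\partial(a[-1])$. The $L$--$L$, $L$--$D$, $D$--$L$ and $D$--$D$ contributions follow immediately from \eqref{OPE1} (differentiating the OPE wherever a derivative piece occurs). The four contributions involving a single quadratic factor ($L$--$Q$, $Q$--$L$, $D$--$Q$, $Q$--$D$) and the $Q$--$Q$ contribution require the noncommutative Wick formula for OPEs of normally ordered products: single contractions produce the current-valued and normally-ordered-product terms appearing on the right-hand sides, while double (Wick) contractions of $Q$ against $Q$ produce the purely scalar and central contributions through the $\kappa$-terms and a summation over the block index $r_1<r_2$.

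The main obstacle is combinatorial bookkeeping rather than anything conceptual. Three points demand care. \emph{(i)} The Koszul signs ${(-1)}^{p(\cdot)}$ from the super structure must be propagated through every contraction; it is precisely their combination that produces the prefactors ${(-1)}^{p(i)}$ and ${(-1)}^{p(j)}$ on the right. \emph{(ii)} The block index $s$ inside $(s-1)(m+n)+i$ must be handled so that the many individual current terms collapse back into the generators $W^{(1)}$ and $W^{(2)}$; in particular one must recognize normally ordered expressions such as $(W^{(1)}_{i,j})_{(-1)}W^{(2)}_{j,i}$ and $(W^{(1)}_{j,i})_{(-1)}\partial W^{(1)}_{i,j}$ in the raw output, as well as the $\partial W^{(1)}$ and $\partial^2 W^{(1)}$ terms. \emph{(iii)} The delicate numerical coefficients $\frac{l(l-1)}{2}\alpha^2$, $\frac{l(l-1)^2}{2}c\,\alpha$ and $(l-1)^2c-(l-1)$ arise from summing the double-contraction contributions over $r_1<r_2$ together with the $(1-c)$-part of the inner product in \eqref{leq}; getting these exactly right is where the calculation is most error-prone. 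Once all nine contributions are summed and regrouped, extracting the coefficients of $(z-w)^{-1}$ and $(z-w)^{-2}$ yields the two stated formulas.
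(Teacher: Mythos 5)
Your proposal is correct and takes essentially the same route as the paper: the paper gives no details, stating only that Lemma~\ref{Lem4} ``is shown by direct calculation,'' and your plan --- split each $W^{(2)}_{i,i}$ via \eqref{W2} into the linear, $\alpha\partial$-type, and normally ordered quadratic pieces, compute the nine pairwise OPEs from \eqref{OPE1} with the non-commutative Wick formula, and read off the first- and second-order poles as the $(0)$- and $(1)$-products --- is exactly that direct computation, with the conventions (pole orders, the sign ${(-1)}^{p(i)}$ in the quadratic piece, the $c$-dependence entering through $\kappa$) handled consistently with the paper.
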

\begin{Remark}
In \cite{Rap}, Rap\v{c}\'{a}k defined two kinds of elements of rectangular $W$-superalgebras of type $A$, which are called $U_{1,i,j}$ and $U_{2,i,j}\ (1\leq i,j\leq m+n)$ under the assumption that $c=0$. The element $U_{r,i,j}$ is corresponding to ${(-1)}^{p(i)p(j)}W^{(r)}_{i,j}\ (r=1,2)$, where $J^{(x)}_{a,b}$ in \cite{Rap} is corresponding to ${(-1)}^{p(a)p(b)}e_{b,a}$ in this paper. 
\end{Remark}
\section{Affine Super Yangians and Rectangular $W$-superalgebras}
In this section, we prove the main result of this paper.
Here after, we assume that $m\neq n$ and set
\begin{gather*}
\ve_1=\dfrac{\alpha}{m-n},\quad\ve_2=-1-\dfrac{\alpha}{m-n}
\end{gather*}
and fix an invariant inner product on $\mathfrak{gl}(m|n)$ such that $c=0$ (see \eqref{leq}). 
\begin{Theorem}\label{Main}
There exists an algebra homomorphism 
\begin{equation*}
\Phi\colon Y_{\ve_1,\ve_2}(\widehat{\mathfrak{sl}}(m|n))\to \mathcal{U}(\mathcal{W}^{k}(\mathfrak{gl}(ml|nl),(l^{(m|n)})))
\end{equation*} 
determined by
\begin{align*}
\Phi(H_{i,0})=\begin{cases}
{(-1)}^{p(m+n)}W^{(1)}_{m+n,m+n}-W^{(1)}_{1,1}+l\alpha&(i=0),\\
{(-1)}^{p(i)}W^{(1)}_{i,i}-{(-1)}^{p(i+1)}W^{(1)}_{i+1,i+1}&(i\neq 0),
\end{cases}\\
\Phi(X^+_{i,0})=\begin{cases}
W^{(1)}_{1,m+n}t&(i=0),\\
W^{(1)}_{i+1,i}&(i\neq0),
\end{cases}
\quad \Phi(X^-_{i,0})=\begin{cases}
{(-1)}^{p(m+n)}W^{(1)}_{m+n,1}t^{-1}&(i=0),\\
{(-1)}^{p(i)}W^{(1)}_{i,i+1}&(i\neq0),
\end{cases}
\end{align*}
\begin{align*}
\Phi(H_{i,1})&=\begin{cases}{(-1)}^{p(m+n)}W^{(2)}_{m+n,m+n}t-W^{(2)}_{1,1}t+{(-1)}^{p(m+n)}(l-1)\alpha W^{(1)}_{m+n,m+n}\\
\quad- l\alpha\Phi(H_{0,0}) +{(-1)}^{p(m+n)}W^{(1)}_{m+n,m+n} (W^{(1)}_{1,1}-l\alpha)\\
\quad-{(-1)}^{p(m+n)}\displaystyle\sum_{s \geq 0} \limits\displaystyle\sum_{u=1}^{m+n}\limits{(-1)}^{p(u)}W^{(1)}_{u,m+n}t^{-s} W^{(1)}_{m+n,u}t^s\\
\quad+\displaystyle\sum_{s \geq 0}\displaystyle\sum_{u=1}^{m+n}\limits{(-1)}^{p(u)}W^{(1)}_{u,1}t^{-s-1} W^{(1)}_{1,u}t^{s+1},\\
\qquad\qquad\qquad\qquad\qquad\qquad\qquad\qquad\qquad\qquad\qquad\qquad\qquad\qquad i=0,\\
{(-1)}^{p(i)}W^{(2)}_{i,i}t-{(-1)}^{p(i+1)}W^{(2)}_{i+1,i+1}t\\
\quad+\dfrac{i-2\delta(i\geq m+1)(i-m)}{2}\Phi(H_{i,0})+{(-1)}^{p(E_{i,i+1})}  W^{(1)}_{i,i}W^{(1)}_{i+1,i+1}\\
\quad-{(-1)}^{p(i)} \displaystyle\sum_{s \geq 0}  \limits\displaystyle\sum_{u=1}^{i}\limits{(-1)}^{p(u)} W^{(1)}_{u,i}t^{-s}W^{(1)}_{i,u}t^s\\
\quad-{(-1)}^{p(i)} \displaystyle\sum_{s \geq 0} \limits\displaystyle\sum_{u=i+1}^{m+n}\limits {(-1)}^{p(u)}W^{(1)}_{u,i}t^{-s-1} W^{(1)}_{i,u}t^{s+1}\\
\quad+{(-1)}^{p(i+1)}\displaystyle\sum_{s \geq 0}\limits\displaystyle\sum_{u=1}^{i}\limits{(-1)}^{p(u)}W^{(1)}_{u,i+1}t^{-s} W^{(1)}_{i+1,u}t^s\\
\quad+{(-1)}^{p(i+1)}\displaystyle\sum_{s \geq 0}\limits\displaystyle\sum_{u=i+1}^{m+n} \limits{(-1)}^{p(u)}W^{(1)}_{u,i+1}t^{-s-1} W^{(1)}_{i+1,u}t^{s+1}\\
\qquad\qquad\qquad\qquad\qquad\qquad\qquad\qquad\qquad\qquad\qquad\qquad\qquad\qquad i\neq0,
\end{cases}
\end{align*}
\begin{align*}
\Phi(X^+_{i,1})&=\begin{cases}
W^{(2)}_{1,m+n}t^2+(l-1)\alpha W^{(1)}_{1,m+n}t-
l\alpha \Phi(X_{0,0}^{+})-\displaystyle\sum_{s \geq 0} \limits\displaystyle\sum_{u=1}^{m+n}\limits {(-1)}^{p(u)}W^{(1)}_{u,m+n}t^{-s} W^{(1)}_{1,u}t^{s+1}\\
\qquad\qquad\qquad\qquad\qquad\qquad\qquad\qquad\qquad\qquad\qquad\qquad\qquad\qquad \text{ if $i = 0$},\\
W^{(2)}_{i+1,i}t+\dfrac{i-2\delta(i\geq m+1)(i-m)}{2}\Phi(X_{i,0}^{+})\\
\quad-\displaystyle\sum_{s \geq 0}\limits\displaystyle\sum_{u=1}^i\limits {(-1)}^{p(u)}W^{(1)}_{u,i}t^{-s} W^{(1)}_{i+1,u}t^s-\displaystyle\sum_{s \geq 0}\limits\displaystyle\sum_{u=i+1}^{m+n}\limits {(-1)}^{p(u)}W^{(1)}_{u,i}t^{-s-1} W^{(1)}_{i+1,u}t^{s+1}\\
\qquad\qquad\qquad\qquad\qquad\qquad\qquad\qquad\qquad\qquad\qquad\qquad\qquad\qquad \text{ if $i \neq 0$},
\end{cases}
\end{align*}
\begin{align*}
\Phi(X^-_{i,1})&=\begin{cases}
{(-1)}^{p(m+n)}W^{(2)}_{m+n,1}-l\alpha\Phi(X_{0,0}^{-})\\
\quad-{(-1)}^{p(m+n)}\displaystyle\sum_{s \geq 0} \limits\displaystyle\sum_{u=1}^{m+n}\limits {(-1)}^{p(u)}W^{(1)}_{1,u}t^{-s-1} W^{(1)}_{m+n,u}t^s,\\
\qquad\qquad\qquad\qquad\qquad\qquad\qquad\qquad\qquad\qquad\qquad\qquad\qquad\qquad \text{ if $i = 0$},\\
{(-1)}^{p(i)}W^{(2)}_{i,i+1}t+\dfrac{i-2\delta(i\geq m+1)(i-m)}{2}\Phi(X_{i,0}^{-})\\
\quad-{(-1)}^{p(i)}\displaystyle\sum_{s \geq 0}\limits\displaystyle\sum_{u=1}^i\limits {(-1)}^{p(u)}W^{(1)}_{u,i+1}t^{-s} W^{(1)}_{i,u}t^s\\
\quad-{(-1)}^{p(i)}\displaystyle\sum_{s \geq 0}\limits\displaystyle\sum_{u=i+1}^{m+n}\limits {(-1)}^{p(u)}W^{(1)}_{u,i+1}t^{-s-1} W^{(1)}_{i,u}t^{s+1} \\
\qquad\qquad\qquad\qquad\qquad\qquad\qquad\qquad\qquad\qquad\qquad\qquad\qquad\qquad\text{ if $i \neq 0$}.
\end{cases}
\end{align*}
\end{Theorem}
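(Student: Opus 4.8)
The plan is to check that the proposed assignments respect the defining relations of the affine super Yangian. By Proposition~\ref{Prop32} it suffices to use the finite presentation, so I must verify only that the images $\Phi(H_{i,r}),\Phi(X^\pm_{i,r})$ with $r=0,1$ satisfy \eqref{Eq2.1}--\eqref{Eq2.12}. Two preliminary matters come first. One is that the sums $\sum_{s\geq0}$ in $\Phi(H_{i,1})$ and $\Phi(X^\pm_{i,1})$ define genuine elements of the degreewise completion $\mathcal{U}(\mathcal{W}^k(\mathfrak{gl}(ml|nl),(l^{(m|n)})))$, which holds because in each fixed degree all but finitely many summands vanish modulo the completion ideal, exactly as for the image of the evaluation map of Theorem~\ref{thm:main}. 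The other is the specialization of parameters imposed by the hypotheses, namely $\hbar=\ve_1+\ve_2=-1$, $\ve=-(m-n)\ve_2=\alpha+(m-n)$, hence $\ve+\tfrac{m-n}{2}\hbar=\alpha+\tfrac{m-n}{2}$, with $c=0$; these are the scalars that the boundary relations \eqref{Eq2.6}, \eqref{Eq2.7}, \eqref{Eq2.9} must reproduce.

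The relations \eqref{Eq2.2}, \eqref{Eq2.4}, \eqref{Eq2.10}, \eqref{Eq2.11}, \eqref{Eq2.12}, as well as the cases of \eqref{Eq2.1} in which at least one index is $0$, follow with little work. By Lemma~\ref{Lem1} the elements $W^{(1)}_{i,j}t^s$ span a homomorphic image of $\widehat{\mathfrak{gl}}(m|n)^\kappa$ inside the target, and under the homomorphism $\xi$ of that lemma the degree-zero generators $\Phi(H_{i,0}),\Phi(X^\pm_{i,0})$ are precisely the images of the Chevalley generators $h_i,x^\pm_i$ of $\widehat{\mathfrak{sl}}(m|n)\subset\widehat{\mathfrak{gl}}(m|n)^\kappa$, the central contribution being $l\alpha t^{-1}=\xi(\tilde c)$ inside $\Phi(H_{0,0})$. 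Thus the Cartan, Chevalley and Serre relations among the $r=0$ images are inherited from $\widehat{\mathfrak{gl}}(m|n)^\kappa$, while \eqref{Eq2.4} for $r=1$ and the mixed cases of \eqref{Eq2.1} reduce to observing that each summand of $\Phi(X^\pm_{j,1})$ and of $\Phi(H_{j,1})$ is an $\mathfrak{h}$-weight vector of the expected weight.

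The heart of the argument is \eqref{Eq2.3}, \eqref{Eq2.5}--\eqref{Eq2.9} and the case $[H_{i,1},H_{j,1}]$ of \eqref{Eq2.1}, which I would treat by direct computation from the OPEs of Section~4. Each $r=1$ image consists of a leading term $W^{(2)}t$, a scalar multiple of the corresponding $r=0$ image, and quadratic expressions in the $W^{(1)}$'s patterned on the evaluation map of Theorem~\ref{thm:main} via $E_{i,j}(s)\mapsto W^{(1)}_{j,i}t^s$. Consequently each bracket decomposes into a $W^{(2)}$-against-$W^{(1)}$ part, governed by Corollary~\ref{COR} (whose leading term realizes the adjoint action and so produces the $X^\pm_{j,1}$- and $H_{j,1}$-type terms on the right-hand sides), and a $W^{(1)}$-against-$W^{(1)}$ part, controlled by the $\widehat{\mathfrak{gl}}(m|n)^\kappa$-relations together with Lemmas~\ref{Lem2} and \ref{Lem3}. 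For \eqref{Eq2.5}--\eqref{Eq2.7} I would form $\Phi(\tilde H_{i,1})=\Phi(H_{i,1})-\tfrac{\hbar}{2}\Phi(H_{i,0})^2$, the quadratic correction being exactly what cancels the unwanted $\{W^{(1)},W^{(1)}\}$ terms, and compute $[\Phi(\tilde H_{i,1}),\Phi(X^\pm_{j,0})]$; for \eqref{Eq2.8}, \eqref{Eq2.9} I would compute the antisymmetrized brackets $[\Phi(X^\pm_{i,1}),\Phi(X^\pm_{j,0})]-[\Phi(X^\pm_{i,0}),\Phi(X^\pm_{j,1})]$ and match them to $\pm a_{ij}\tfrac{\hbar}{2}\{\cdots\}$; and for \eqref{Eq2.3} I would check that $[\Phi(X^+_{i,1}),\Phi(X^-_{j,0})]$ and $[\Phi(X^+_{i,0}),\Phi(X^-_{j,1})]$ both collapse to $\delta_{ij}\Phi(H_{i,1})$.

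I expect the hardest step to be the relation $[\Phi(H_{i,1}),\Phi(H_{j,1})]=0$, since both arguments carry the leading terms $W^{(2)}t$; this genuinely requires the $W^{(2)}$-against-$W^{(2)}$ OPEs of Lemma~\ref{Lem4}, and one must confirm that the large number of resulting quadratic and $\partial W^{(1)}$ terms cancels in full, which is where $c=0$ enters decisively. The second delicate point is the boundary cases $i\in\{0,m+n-1\}$ of \eqref{Eq2.6}, \eqref{Eq2.7}, \eqref{Eq2.9}: there the central term $l\alpha t^{-1}$ concealed in $\Phi(H_{0,0})$ and the scalar $\ve+\tfrac{m-n}{2}\hbar$ must appear with precisely the right coefficient, and matching these coefficients is exactly what forces the identifications $\ve_1=\alpha/(m-n)$, $\ve_2=-1-\alpha/(m-n)$. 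Once these cancellations are in place, the remaining work is careful but routine tracking of signs and indices (the parities $p(i)$ through the supercommutators), and the existence of $\Phi$ follows.
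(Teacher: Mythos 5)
Your proposal is correct and takes essentially the same route as the paper: the paper likewise reduces to the presentation of Proposition~\ref{Prop32}, settles \eqref{Eq2.2} and \eqref{Eq2.10}--\eqref{Eq2.12} by Lemma~\ref{Lem1}, and handles the remaining relations by splitting each $r=1$ image into its $W^{(2)}$-terms (treated with Corollary~\ref{COR}) plus the evaluation-map-patterned quadratic part, formalized there as a map $\widetilde{\ev}$ satisfying the Yangian relations at the shifted parameter $\tilde\ve_1=l\alpha/(m-n)$ (Lemma~\ref{Claim1211}), the level-$l\alpha$ versus $\alpha$ discrepancy being absorbed by the $(l-1)\alpha W^{(1)}$ terms of Corollary~\ref{COR} exactly as your boundary-coefficient matching anticipates. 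Your identification of $[\Phi(H_{i,1}),\Phi(H_{j,1})]=0$ as the hardest step also matches the paper, which proves it by a direct computation with Lemma~\ref{Lem4} and $c=0$, including the subtlety that the quadratic parts themselves fail to commute (the cocycles of $\widehat{\mathfrak{gl}}(m|n)^\kappa$ and $\widehat{\mathfrak{gl}}(m|n)^{\text{str}}$ differ on the diagonal), their nonzero bracket cancelling against the remaining terms.
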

\begin{proof}
It is enough to show that $\Phi$ is compatible with the defining relations \eqref{Eq2.1}-\eqref{Eq2.12}. By Lemma~\ref{Lem1}, we find that $\Phi$ is compatible with \eqref{Eq2.2}, \eqref{Eq2.10}, \eqref{Eq2.11} and \eqref{Eq2.12}. 
Thus, it is enough to show that $\Phi$ is compatible with \eqref{Eq2.1} and \eqref{Eq2.3}-\eqref{Eq2.9}. We divide the proof into two pieces, that is, Claim~\ref{Claim1234} and Claim~\ref{Claim1236} below. In Claim~\ref{Claim1234}, we show that $\Phi$ is compatible with \eqref{Eq2.3}-\eqref{Eq2.9}. In Claim~\ref{Claim1236}, we prove that $\Phi$ is compatible with \eqref{Eq2.1}.

In order to prove Claims~\ref{Claim1234} and \ref{Claim1236}, we relate $\Phi$ with the evaluation map of the affine super Yangian. We set $\widetilde{\ev}(H_{i,s})$ and $\widetilde{\ev}(X^\pm_{i,s})$ $(s=0,1)$ as
\begin{align*}
\widetilde{\ev}(H_{i,0})=\Phi(H_{i,0}),\quad\widetilde{\ev}(X^\pm_{i,0})=\Phi(X^\pm_{i,0}),
\end{align*}
\begin{align*}
\widetilde{\ev}(H_{i,1})&=\begin{cases}
\Phi(H_{0,1})-\{{(-1)}^{p(m+n)}W^{(2)}_{m+n,m+n}t-W^{(2)}_{1,1}t+{(-1)}^{p(m+n)}(l-1)\alpha W^{(1)}_{m+n,m+n}\}\\
\qquad\qquad\qquad\qquad\qquad\qquad\qquad\qquad\qquad\qquad\qquad\qquad\qquad\qquad\text{ if }i=0,\\
\Phi(H_{i,1})-\{{(-1)}^{p(i)}W^{(2)}_{i,i}t-{(-1)}^{p(i+1)}W^{(2)}_{i+1,i+1}t\}\text{ if }i\neq0,
\end{cases}
\end{align*}
\begin{align*}
\widetilde{\ev}(X^+_{i,1})&=\begin{cases}
\Phi(X^+_{i,1})-\{W^{(2)}_{1,m+n}t^2+(l-1)\alpha W^{(1)}_{1,m+n}t\}&\text{ if }i=0,\\
\Phi(X^+_{i,1})-W^{(2)}_{i+1,i}t&\text{ if }i\neq0,
\end{cases}
\end{align*}
\begin{align*}
\widetilde{\ev}(X^-_{i,1})&=\begin{cases}
\Phi(X^-_{i,1})-{(-1)}^{p(m+n)}W^{(2)}_{m+n,1}&\text{ if }i=0,\\
\Phi(X^-_{i,1})-{(-1)}^{p(i)}W^{(2)}_{i,i+1}t&\text{ if }i\neq 0.
\end{cases}
\end{align*}
We note that $\widehat{\mathfrak{gl}}(m|n)^\kappa$ is the same as $\widehat{\mathfrak{gl}}(m|n)^\text{str}$ except of the inner product on the diagonal part. By Lemma~\ref{Lem1}, we can prove that $\widetilde{\ev}$ is compatible with \eqref{Eq2.2}-\eqref{Eq2.12} which are parts of the defining relations of the affine super Yangian $Y_{\frac{l\alpha}{m-n},-1-\frac{l\alpha}{m-n}}(\widehat{\mathfrak{sl}}(m|n))$ in a way similar to the proof of the existence of the evaluation map (see Theorem~5.2 in \cite{U2}). This is summarized as the following lemma.
\begin{Lemma}\label{Claim1211}
Let us set
\begin{equation*}
\tilde{\ve}_1=\frac{l\alpha}{m-n},\qquad\tilde{\ve}_2=-1-\frac{l\alpha}{m-n}.
\end{equation*}
Then, $\widetilde{\ev}$ is compatible with \eqref{Eq2.2}-\eqref{Eq2.12} which are parts of the defining relations of the affine super Yangian $Y_{\tilde{\ve}_1,\tilde{\ve}_2}(\widehat{\mathfrak{sl}}(m|n))$.
\end{Lemma}
We remark that $\widetilde{\ev}$ is not an algebra homomorphism since $[\widetilde{\ev}(H_{i,1}),\widetilde{\ev}(H_{j,1})]$ is not equal to zero. See \eqref{equat2} below for the details.
\begin{Claim}\label{Claim1234}
For all $i,j\in \{0,1,\cdots,m+n-1\}$, $\Phi$ is compatible with \eqref{Eq2.3}-\eqref{Eq2.9}.
\end{Claim}
\begin{proof}
We only show that $\Phi$ is compatible with \eqref{Eq2.6}. The other cases are proven in a similar way. It is enough to show that
\begin{align}
&\quad[\Phi(\tilde{H}_{0,1}),\Phi(X^+_{m+n-1,0})]\nonumber\\
&=-{(-1)}^{p(m+n)}\{\Phi(X^+_{m+n-1,1})-\left((m-n)+\alpha-\dfrac{m-n}{2}\right)W^{(1)}_{m+n,m+n-1}\},\label{eqA8}\\
&\quad[\Phi(\tilde{H}_{0,1}),\Phi(X^-_{m+n-1,0})]\nonumber\\
&={(-1)}^{p(m+n)}\{\Phi(X^-_{m+n-1,1})-{(-1)}^{p(m+n-1)}\left((m-n)+\alpha-\dfrac{m-n}{2}\right)W^{(1)}_{m+n-1,m+n}\}.\label{eqA13}
\end{align}
By the definition of $\Phi$, we can rewrite the left hand side of \eqref{eqA8} as
\begin{align}
&\quad[\Phi(\tilde{H}_{0,1}),\Phi(X^+_{m+n-1,0})]\nonumber\\
&=-[W^{(1)}_{m+n,m+n-1},{(-1)}^{p(m+n)}W^{(2)}_{m+n,m+n}t]+[W^{(1)}_{m+n,m+n-1},W^{(2)}_{1,1}t]\nonumber\\
&\quad-[W^{(1)}_{m+n,m+n-1},{(-1)}^{p(m+n)}(l-1)\alpha W^{(1)}_{m+n,m+n}]+[\widetilde{\ev}(\tilde{H}_{0,1}),\widetilde{\ev}(X^+_{m+n-1,0})].\label{eqA9}
\end{align}
By Corollary~\ref{COR}, we obtain
\begin{align}
-[W^{(1)}_{m+n,m+n-1},{(-1)}^{p(m+n)}W^{(2)}_{m+n,m+n}t]&=-{(-1)}^{p(m+n)}W^{(2)}_{m+n,m+n-1}t,\label{eqA10}\\
[W^{(1)}_{m+n,m+n-1},W^{(2)}_{1,1}t]&=0.
\end{align}
By Corollary~\ref{COR}, we have
\begin{gather}
-[W^{(1)}_{m+n,m+n-1},{(-1)}^{p(m+n)}(l-1)\alpha W^{(1)}_{m+n,m+n}]=-{(-1)}^{p(m+n)}(l-1)\alpha W^{(1)}_{m+n,m+n-1}.\label{eqA11}
\end{gather}
By Lemma~\ref{Claim1211}, we also obtain
\begin{align}
&\quad[\widetilde{\ev}(\tilde{H}_{0,1}),\widetilde{\ev}(X^+_{m+n-1,0})]\nonumber\\
&=-{(-1)}^{p(m+n)}\widetilde{\ev}(X^+_{m+n-1,1})+{(-1)}^{p(m+n)}\left((m-n)+l\alpha-\dfrac{m-n}{2}\right)W^{(1)}_{m+n,m+n-1}.\label{eqA12}
\end{align}
The identity \eqref{eqA8} follows by applying \eqref{eqA10}-\eqref{eqA12} to \eqref{eqA9}. We can prove that $\Phi$ is compatible with \eqref{Eq2.7} in a similar way. 

Similarly, by the definition of $\Phi$, we obtain
\begin{align}
&\quad[\Phi(\tilde{H}_{0,1}),{(-1)}^{p(m+n-1)}W^{(1)}_{m+n-1,m+n}]\nonumber\\
&=-[{(-1)}^{p(m+n-1)}W^{(1)}_{m+n-1,m+n},{(-1)}^{p(m+n)}W^{(2)}_{m+n,m+n}t]\nonumber\\
&\quad-[{(-1)}^{p(m+n-1)}W^{(1)}_{m+n-1,m+n},-W^{(2)}_{1,1}t]\nonumber\\
&\quad-[{(-1)}^{p(m+n-1)}W^{(1)}_{m+n-1,m+n},{(-1)}^{p(m+n)}(l-1)\alpha W^{(1)}_{m+n,m+n}]\nonumber\\
&\quad+[\widetilde{\ev}(\tilde{H}_{0,1}),\widetilde{\ev}(X^-_{m+n-1,0})].\label{eqA14}
\end{align}
By Corollary~\ref{COR}, we obtain
\begin{align}
-[{(-1)}^{p(m+n-1)}W^{(1)}_{m+n-1,m+n},{(-1)}^{p(m+n)}W^{(2)}_{m+n,m+n}t]&={(-1)}^{p(m+n-1)+p(m+n)}W^{(2)}_{m+n-1,m+n}t,\label{eqA15}\\
-[{(-1)}^{p(m+n-1)}W^{(1)}_{m+n-1,m+n},W^{(2)}_{1,1}t]&=0.\label{eqA16}
\end{align}
By Lemma~\ref{Lem1}, we have
\begin{align}
&\quad-[{(-1)}^{p(m+n-1)}W^{(1)}_{m+n-1,m+n},{(-1)}^{p(m+n)}(l-1)\alpha W^{(1)}_{m+n,m+n}]\nonumber\\
&={(-1)}^{p(m+n-1)+p(m+n)}(l-1)\alpha W^{(1)}_{m+n-1,m+n}.\label{eqA17}
\end{align}
By Lemma~\ref{Claim1211}, we obtain
\begin{align}
&\quad[\widetilde{\ev}(\tilde{H}_{0,1}),\widetilde{\ev}(X^-_{m+n-1,0})]\nonumber\\
&={(-1)}^{p(m+n)}\widetilde{\ev}(X^-_{m+n-1,1})\nonumber\\
&\qquad\qquad\qquad\qquad\qquad-{(-1)}^{p(m+n)}\left((m-n)+l\alpha-\dfrac{m-n}{2}\right)\left({(-1)}^{p(m+n-1)}W^{(1)}_{m+n-1,m+n}\right).\label{eqA18}
\end{align}
The identity \eqref{eqA13} follows by applying \eqref{eqA15}-\eqref{eqA18} to \eqref{eqA14}.
Thus, we have shown that $\Phi$ is compatible with \eqref{Eq2.6}. 
\end{proof}
Finally, we prove that $\Phi$ is compatible with \eqref{Eq2.1}.
\begin{Claim}\label{Claim1236}
The following equation holds for all $i,j\in \{0,1,\cdots,m+n-1\},\ r,s\in\{0,1\}$;
\begin{equation*}
[\Phi(H_{i,r}),\Phi(H_{j,s})]=0.
\end{equation*}
\end{Claim}
\begin{proof}
By Lemma~\ref{Lem1}, we obtain $[\Phi(H_{i,0}),\Phi(H_{j,0})]=0$. In the similar way as that of Claim~\ref{Claim1234}, we have $[\Phi(H_{i,0}),\Phi(H_{j,1})]=0$. Thus, it is enough to show that $[\Phi(H_{i,1}),\Phi(H_{j,1})]=0$. 
We only show the case when $i,j\neq0$ and $i> j$. The other case is proven in a similar way. 
In order to simplify the notation, we set
\begin{align*}
X_i&=-{(-1)}^{p(i)} \displaystyle\sum_{s \geq 0}  \limits\displaystyle\sum_{u=1}^{i}\limits{(-1)}^{p(u)} W^{(1)}_{u,i}t^{-s} W^{(1)}_{i,u}t^s\\
&\quad-{(-1)}^{p(i)} \displaystyle\sum_{s \geq 0} \limits\displaystyle\sum_{u=i+1}^{m+n}\limits {(-1)}^{p(u)}W^{(1)}_{u,i}t^{-s-1}W^{(1)}_{i,u}t^{s+1}.
\end{align*}
By the definition of $\widetilde{\ev}$, we obtain
\begin{align}
\widetilde{\ev}(H_{i,1})&= \dfrac{i-2\delta(i \geq m+1)(i-m)}{2}((-1)^{p(i)}W_{i,i}^{(1)}-(-1)^{p(i+1)}W_{i+1,i+1}^{(1)}) \nonumber\\
 &\quad+(-1)^{p(E_{i,i+1})} W_{i,i}^{(1)}W_{i+1,i+1}^{(1)} + X_{i} - X_{i+1} -(W_{i+1,i+1}^{(1)})^{2}\nonumber\\
&=X_i-X_{i+1}+(\text{the term generated by }\{W^{(1)}_{i,i}t^0|1\leq i\leq m+n\}).\label{gath0}
\end{align}
By Lemma~\ref{Lem1}, Lemma~\ref{Lem2} and \eqref{gath0}, we obtain
\begin{gather}
[\widetilde{\ev}(H_{i,1}),\widetilde{\ev}(H_{j,1})]=[X_i-X_{i+1},X_j-X_{j+1}],\label{gath1}\\
[\widetilde{\ev}(H_{i,1}),({(-1)}^{p(j)}W^{(2)}_{j,j}-{(-1)}^{p(j+1)}W^{(2)}_{j+1,j+1})t]\qquad\qquad\qquad\qquad\qquad\qquad\nonumber\\
\qquad\qquad\qquad\qquad\qquad=[X_i-X_{i+1},({(-1)}^{p(j)}W^{(2)}_{j,j}-{(-1)}^{p(j+1)}W^{(2)}_{j+1,j+1})t].\label{gath2}
\end{gather}
We remark that $[\widetilde{\ev}(H_{i,1}),\widetilde{\ev}(H_{j,1})]$ is not equal to zero since the inner products on the diagonal parts of $\widehat{\mathfrak{gl}}(m|n)^\kappa$ and $\widehat{\mathfrak{gl}}(m|n)^\text{str}$ are different.

By \eqref{gath1}, \eqref{gath2}, and the definition of $\Phi$, we obtain
\begin{align*}
&\quad[\Phi(H_{i,1}),\Phi(H_{j,1})]\\
&=[({(-1)}^{p(i)}W^{(2)}_{i,i}-{(-1)}^{p(i+1)}W^{(2)}_{i+1,i+1})t,({(-1)}^{p(j)}W^{(2)}_{j,j}-{(-1)}^{p(j+1)}W^{(2)}_{j+1,j+1})t]\\
&\quad+[X_i-X_{i+1},({(-1)}^{p(j)}W^{(2)}_{j,j}-{(-1)}^{p(j+1)}W^{(2)}_{j+1,j+1})t]\\
&\quad+[({(-1)}^{p(i)}W^{(2)}_{i,i}-{(-1)}^{p(i+1)}W^{(2)}_{i+1,i+1})t,X_j-X_{j+1}]+[X_i-X_{i+1},X_j-X_{j+1}].
\end{align*}
Thus, it is enough to show the relation
\begin{align}
[{(-1)}^{p(i)}W^{(2)}_{i,i}t,{(-1)}^{p(j)}W^{(2)}_{j,j}t]+[X_i,{(-1)}^{p(j)}W^{(2)}_{j,j}t]+[{(-1)}^{p(i)}W^{(2)}_{i,i}t,X_j]+[X_i,X_j]=0\label{equat}
\end{align}
holds for all $i,j\in \{1,\cdots,m+n\},\ i\geq j$. Let us compute each terms of the left hand side of \eqref{equat}. First, we compute the first term of the left hand side of \eqref{equat}.
By Lemma~\ref{Lem4}, we obtain
\begin{align*}
&\quad{(-1)}^{p(i)+p(j)}[W^{(2)}_{i,i}t,W^{(2)}_{j,j}t]\\
&={(-1)}^{p(i)+p(j)}(W^{(2)}_{i,i})_{(0)}W^{(2)}_{j,j}t^2+{(-1)}^{p(i)+p(j)}(W^{(2)}_{i,i})_{(1)}W^{(2)}_{j,j}t\\
&={(-1)}^{p(j)}(W^{(1)}_{i,j})_{(-1)}W^{(2)}_{j,i}t^2-{(-1)}^{p(i)}(W^{(1)}_{j,i})_{(-1)}W^{(2)}_{i,j}t^2-\delta_{i,j}\alpha\partial W^{(2)}_{j,j}t^2\nonumber\\
&\quad-{(-1)}^{p(j)}\partial W^{(2)}_{j,j}t^2+{(-1)}^{p(i)}(l-1)\alpha(W^{(1)}_{j,i})_{(-1)}\partial W^{(1)}_{i,j}t^2\\
&\quad-{(-1)}^{p(i)+p(j)}\{(l-1)^2c-(l-1)\}(W^{(1)}_{j,j})_{(-1)}\partial W^{(1)}_{i,i}t^2\\
&\quad+\delta_{i,j}\dfrac{l(l-1)}{2}\alpha^2\partial^2W^{(1)}_{i,i}t^2+{(-1)}^{p(i)}\dfrac{l(l-1)}{2}\alpha\partial^2W^{(1)}_{i,i}t^2\\
&\quad-{(-1)}^{p(i)}\dfrac{l(l-1)^2}{2}c\alpha\partial^2W^{(1)}_{i,i}t^2+\frac{1}{2}{(-1)}^{p(j)}(l-1)\alpha\partial^2W^{(1)}_{j,j}t^2-\frac{1}{2}{(-1)}^{p(i)}(l-1)\alpha\partial^2W^{(1)}_{i,i}t^2\\
&\quad-{(-1)}^{p(i)+p(j)}\{(l-1)^2c-(l-1)\}(W^{(1)}_{j,j})_{(-1)}W^{(1)}_{i,i}t-2\delta_{i,j}\alpha W^{(2)}_{i,i}t\\
&\quad-{(-1)}^{p(j)} W^{(2)}_{j,j}t-{(-1)}^{p(i)} W^{(2)}_{i,i}t+{(-1)}^{p(i)}(l-1)\alpha(W^{(1)}_{j,i})_{(-1)} W^{(1)}_{i,j}t\\
&\quad+\delta_{i,j}l(l-1)\alpha^2\partial W^{(1)}_{i,i}t+{(-1)}^{p(j)}l(l-1)\alpha\partial W^{(1)}_{i,i}t-{(-1)}^{p(i)}l(l-1)^2c\alpha\partial W^{(1)}_{i,i}t\\
&\quad-{(-1)}^{p(i)}(l-1)\alpha\partial W^{(1)}_{i,i}t+{(-1)}^{p(j)}(l-1)\alpha\partial W^{(1)}_{j,j}t.
\end{align*}
We can rewrite it as
\begin{align}
&-{(-1)}^{p(i)}W^{(2)}_{i,i}t+{(-1)}^{p(j)}W^{(2)}_{j,j}t+{(-1)}^{p(j)}(W^{(1)}_{i,j})_{(-1)}W^{(2)}_{j,i}t^2-{(-1)}^{p(i)}(W^{(1)}_{j,i})_{(-1)}W^{(2)}_{i,j}t^2\nonumber\\
&\quad+{(-1)}^{p(i)}(l-1)\alpha(W^{(1)}_{j,i})_{(-1)}\partial W^{(1)}_{i,j}t^2+{(-1)}^{p(i)}(l-1)\alpha(W^{(1)}_{j,i})_{(-1)}W^{(1)}_{i,j}t\nonumber\\
&\quad-{(-1)}^{p(i)+p(j)}\{(l-1)^2c-(l-1)\}\left((W^{(1)}_{j,j})_{(-1)}\partial W^{(1)}_{i,i}t^2+(W^{(1)}_{j,j})_{(-1)}W^{(1)}_{i,i}t\right)\label{equat0}
\end{align}
since six relations
\begin{gather}
-\delta_{i,j}\alpha\partial W^{(2)}_{j,j}t^2-2\delta_{i,j}\alpha W^{(2)}_{j,j}t=0,\\
-{(-1)}^{p(i)}W^{(2)}_{i,i}t-{(-1)}^{p(j)}W^{(2)}_{j,j}t-{(-1)}^{p(j)}\partial W^{(2)}_{j,j}t^2\qquad\qquad\qquad\qquad\nonumber\\
\qquad\qquad\qquad\qquad\qquad\qquad\qquad\qquad\qquad=-({(-1)}^{p(i)}W^{(2)}_{i,i}t-{(-1)}^{p(j)}W^{(2)}_{j,j}t),\\
\delta_{i,j}\dfrac{l(l-1)}{2}\alpha^2\partial^2W^{(1)}_{i,i}t^2+\delta_{i,j}l(l-1)\alpha^2\partial W^{(1)}_{i,i}t=0,\\
{(-1)}^{p(i)}\dfrac{l(l-1)}{2}\alpha\partial^2W^{(1)}_{i,i}t^2+{(-1)}^{p(i)}l(l-1)\alpha\partial W^{(1)}_{i,i}t=0,\\
-{(-1)}^{p(i)}\dfrac{l(l-1)^2}{2}c\alpha\partial^2W^{(1)}_{i,i}t^2-{(-1)}^{p(i)}l(l-1)^2c\alpha\partial W^{(1)}_{i,i}t=0,\\
\frac{1}{2}{(-1)}^{p(j)}(l-1)\alpha\partial^2W^{(1)}_{j,j}t^2-\frac{1}{2}{(-1)}^{p(i)}(l-1)\alpha\partial^2W^{(1)}_{i,i}t^2\qquad\qquad\qquad\qquad\nonumber\\
\qquad\qquad\qquad\qquad\qquad\qquad\qquad-{(-1)}^{p(i)}(l-1)\alpha\partial W^{(1)}_{i,i}t+{(-1)}^{p(j)}(l-1)\alpha\partial W^{(1)}_{j,j}t=0
\end{gather}
hold by the definition of the translation operator $\partial$. 

In order to rewrite \eqref{equat0}, we remark that the following two relations
\begin{gather}
(x_{(-1)}y)t=\displaystyle\sum_{s\geq0}\limits xt^{-1-s}yt^{s+1}+{(-1)}^{p(x)p(y)}yt^{-s}xt^s,\label{92939}
\end{gather}
\begin{align}
(x_{(-1)}\partial y)t^2&=\displaystyle\sum_{s\geq0}\limits xt^{-1-s}(\partial y)t^{s+2}+{(-1)}^{p(x)p(y)}(\partial y)t^{1-s}xt^s\nonumber\\
&\quad=\displaystyle\sum_{s\geq0}\limits(-(s+2)xt^{-1-s}yt^{s+1}-{(-1)}^{p(x)p(y)}(1-s)yt^{-s}xt^s)\label{92940}
\end{align}
hold by \eqref{241} for all $x,y\in \mathcal{W}^k(\mathfrak{gl}(ml|nl),(l^{(m|n)}))$. By \eqref{92939} and \eqref{92940}, we also obtain
\begin{align}
(x_{(-1)}\partial y)t^2+(x_{(-1)}y)t=\sum_{s\geq0}(-(1+s)xt^{-1-s}yt^{1+s}+{(-1)}^{p(x)p(y)}syt^{-s}xt^s).\label{92941}
\end{align}
By \eqref{92939}-\eqref{92941}, we can rewrite \eqref{equat0} as
\begin{align}
&-{(-1)}^{p(i)}W^{(2)}_{i,i}t+{(-1)}^{p(j)}W^{(2)}_{j,j}t\nonumber\\
&\quad+{(-1)}^{p(j)} \displaystyle\sum_{s \geq 0}  \limits W^{(1)}_{i,j}t^{-s-1}W^{(2)}_{j,i}t^{s+2}+{(-1)}^{p(i)} \displaystyle\sum_{s \geq 0}  \limits W^{(2)}_{j,i}t^{1-s} W^{(1)}_{i,j}t^s\nonumber\\
&\quad-{(-1)}^{p(i)} \displaystyle\sum_{s \geq 0}  \limits W^{(1)}_{j,i}t^{-s-1}W^{(2)}_{i,j}t^{s+2}-{(-1)}^{p(j)} \displaystyle\sum_{s \geq 0}  \limits W^{(2)}_{i,j}t^{1-s} W^{(1)}_{j,i}t^{s}\nonumber\\
&\quad+{(-1)}^{p(j)}(l-1)\alpha \displaystyle\sum_{s \geq 0}  \limits s W^{(1)}_{i,j}t^{-s}W^{(1)}_{j,i}t^s-{(-1)}^{p(i)} (l-1)\alpha\displaystyle\sum_{s \geq 0}  \limits s W^{(1)}_{j,i}t^{-s}W^{(1)}_{i,j}t^s\nonumber\\
&\quad-{(-1)}^{p(i)+p(j)}\{(l-1)^2c-(l-1)\}\sum_{s \geq 0} (-sW^{(1)}_{j,j}t^{-s}W^{(1)}_{i,i}t^s+sW^{(1)}_{i,i}t^{-s}W^{(1)}_{j,j}t^s).\label{equat1}
\end{align}

Next, let us compute the last term of \eqref{equat}. By a computation similar to the proof of the existence of the evaluation map (see Theorem~5.2 of \cite{U2}), it is equal to
\begin{align}
[X_i,X_j]&=-{(-1)}^{p(i)+p(j)}l(lc-1)\sum_{s\geq0}s\{W^{(1)}_{i,i}t^{-s}W^{(1)}_{j,j}t^{s}-W^{(1)}_{j,j}t^{-s}W^{(1)}_{i,i}t^{s}\}\nonumber\\
&\quad-{(-1)}^{p(i)+p(j)}\sum_{s\geq0}s\{W^{(1)}_{i,i}t^{-s}W^{(1)}_{j,j}t^{s}-W^{(1)}_{j,j}t^{-s}W^{(1)}_{i,i}t^{s}\}.\label{equat2}
\end{align}
Finally, let us compute the second term and the third term of \eqref{equat}. By the definition of $X_i$, we obtain
\begin{align}
&\quad[X_i,W^{(2)}_{j,j}t]\nonumber\\
&=-{(-1)}^{p(i)} \displaystyle\sum_{s \geq 0}  \limits\displaystyle\sum_{u=1}^{i}\limits{(-1)}^{p(u)} W^{(1)}_{u,i}t^{-s} [W^{(1)}_{i,u}t^s,W^{(2)}_{j,j}t]\nonumber\\
&\quad-{(-1)}^{p(i)} \displaystyle\sum_{s \geq 0}  \limits\displaystyle\sum_{u=1}^{i}\limits{(-1)}^{p(u)} [W^{(1)}_{u,i}t^{-s},W^{(2)}_{j,j}t] W^{(1)}_{i,u}t^s\nonumber\\
&\quad-{(-1)}^{p(i)} \displaystyle\sum_{s \geq 0} \limits\displaystyle\sum_{u=i+1}^{m+n}\limits {(-1)}^{p(u)}W^{(1)}_{u,i}t^{-s-1} [W^{(1)}_{i,u}t^{s+1},W^{(2)}_{j,j}t]\nonumber\\
&\quad-{(-1)}^{p(i)} \displaystyle\sum_{s \geq 0} \limits\displaystyle\sum_{u=i+1}^{m+n}\limits {(-1)}^{p(u)}[W^{(1)}_{u,i}t^{-s-1},W^{(2)}_{j,j}t] W^{(1)}_{i,u}t^{s+1}.\label{bru}
\end{align}
By Corollary~\ref{COR}, the first term of the right hand side of \eqref{bru} is equal to
\begin{align}
&\quad-{(-1)}^{p(i)} \displaystyle\sum_{s \geq 0}  \limits\displaystyle\sum_{u=1}^{i}\limits{(-1)}^{p(u)} W^{(1)}_{u,i}t^{-s} [W^{(1)}_{i,u}t^s,W^{(2)}_{j,j}t]\nonumber\\
&=-\delta_{i,j}{(-1)}^{p(i)} \displaystyle\sum_{s \geq 0}  \limits\displaystyle\sum_{u=1}^{i}\limits{(-1)}^{p(u)} W^{(1)}_{u,i}t^{-s}W^{(2)}_{i,u}t^{s+1}+\delta(i\geq j){(-1)}^{p(i)+p(j)} \displaystyle\sum_{s \geq 0}  \limits W^{(1)}_{j,i}t^{-s}W^{(2)}_{i,j}t^{s+1}\nonumber\\
&\quad-\delta_{i,j}{(-1)}^{p(i)}(l-1)\alpha \displaystyle\sum_{s \geq 0}  \limits\displaystyle\sum_{u=1}^{i}\limits {(-1)}^{p(u)}s W^{(1)}_{u,i}t^{-s}W^{(1)}_{i,u}t^s\nonumber\\
&\quad+{(-1)}^{p(i)}(l-1)(lc-1) \displaystyle\sum_{s \geq 0}  \limits sW^{(1)}_{i,i}t^{-s}W^{(1)}_{j,j}t^s\label{aa1}
\end{align}
since by \eqref{844} $\kappa(e_{i,u},e_{j,j})t^{s-1}$ is equal to zero unless $s=0$. Similarly to \eqref{aa1}, we rewrite the second, third, and 4-th terms of the right hand side of \eqref{bru}. By Corollary~\ref{COR}, the second term of the right hand side of \eqref{bru} is equal to
\begin{align}
&\quad-{(-1)}^{p(i)} \displaystyle\sum_{s \geq 0}  \limits\displaystyle\sum_{u=1}^{i}\limits{(-1)}^{p(u)} [W^{(1)}_{u,i}t^{-s},W^{(2)}_{j,j}t] W^{(1)}_{i,u}t^s\nonumber\\
&=-\delta(i\geq j){(-1)}^{p(i)+p(j)} \displaystyle\sum_{s \geq 0}  \limits W^{(2)}_{j,i}t^{1-s} W^{(1)}_{i,j}t^s+\delta_{i,j}{(-1)}^{p(i)} \displaystyle\sum_{s \geq 0}  \limits\displaystyle\sum_{u=1}^{i}\limits{(-1)}^{p(u)}W^{(2)}_{u,i}t^{1-s}W^{(1)}_{i,u}t^s\nonumber\\
&\quad+\delta(i\geq j){(-1)}^{p(i)+p(j)}(l-1)\alpha \displaystyle\sum_{s \geq 0}  \limits s W^{(1)}_{j,i}t^{-s}W^{(1)}_{i,j}t^s\nonumber\\
&\quad-{(-1)}^{p(i)}(l-1)(lc-1) \displaystyle\sum_{s \geq 0}  \limits sW^{(1)}_{j,j}t^{-s}W^{(1)}_{i,i}t^s.\label{aa2}
\end{align}
By Corollary~\ref{COR}, the third term of the right hand side of \eqref{bru} is equal to
\begin{align}
&\quad-{(-1)}^{p(i)} \displaystyle\sum_{s \geq 0} \limits\displaystyle\sum_{u=i+1}^{m+n}\limits {(-1)}^{p(u)}W^{(1)}_{u,i}t^{-s-1} [W^{(1)}_{i,u}t^{s+1},W^{(2)}_{j,j}t]\nonumber\\
&=-\delta_{i,j}{(-1)}^{p(i)} \displaystyle\sum_{s \geq 0}  \limits\displaystyle\sum_{u=i+1}^{m+n}\limits{(-1)}^{p(u)} W^{(1)}_{u,i}t^{-s-1}W^{(2)}_{i,u}t^{s+2}\nonumber\\
&\quad+\delta(i< j){(-1)}^{p(i)+p(j)} \displaystyle\sum_{s \geq 0}  \limits W^{(1)}_{j,i}t^{-s-1}W^{(2)}_{i,j}t^{s+2}\nonumber\\
&\quad-\delta_{i,j}{(-1)}^{p(i)}(l-1)\alpha \displaystyle\sum_{s \geq 0}  \limits\displaystyle\sum_{u=i+1}^{m+n}\limits(s+1){(-1)}^{p(u)} W^{(1)}_{u,i}t^{-s-1}W^{(1)}_{i,u}t^{s+1}.\label{aa3}
\end{align}
By Corollary~\ref{COR}, the 4-th term of the right hand side of \eqref{bru} is equal to
\begin{align}
&\quad-{(-1)}^{p(i)} \displaystyle\sum_{s \geq 0} \limits\displaystyle\sum_{u=i+1}^{m+n}\limits {(-1)}^{p(u)}[W^{(1)}_{u,i}t^{-s-1},W^{(2)}_{j,j}t] W^{(1)}_{i,u}t^{s+1} \nonumber\\
&=-\delta(i< j){(-1)}^{p(i)+p(j)} \displaystyle\sum_{s \geq 0}  \limits W^{(2)}_{j,i}t^{-s} W^{(1)}_{i,j}t^{s+1}\nonumber\\
&\quad+\delta_{i,j}{(-1)}^{p(i)} \displaystyle\sum_{s \geq 0}  \limits\displaystyle\sum_{u=i+1}^{m+n}\limits{(-1)}^{p(u)}W^{(2)}_{u,i}t^{-s}W^{(1)}_{i,u}t^{s+1}\nonumber\\
&\quad+\delta(i<j){(-1)}^{p(i)+p(j)}(l-1)\alpha \displaystyle\sum_{s \geq 0}  \limits (s+1) W^{(1)}_{j,i}t^{-s-1}W^{(1)}_{i,j}t^{s+1}.\label{aa4}
\end{align}
We prepare some notations. We denote the $i$-th term of the right hand side of \eqref{aa1} (resp.\ \eqref{aa2}, \eqref{aa3}, \eqref{aa4}) by $\eqref{aa1}_i$ (resp.\ $\eqref{aa2}_i$, $\eqref{aa3}_i$, $\eqref{aa4}_i$). Let us set
\begin{align*}
A_{i,j}&={(-1)}^{p(j)}\eqref{aa1}_1+{(-1)}^{p(j)}\eqref{aa2}_2+{(-1)}^{p(j)}\eqref{aa3}_1+{(-1)}^{p(j)}\eqref{aa4}_2\\
&=-\delta_{i,j}\displaystyle\sum_{s \geq 0}  \limits\displaystyle\sum_{u=1}^{i}\limits{(-1)}^{p(u)} W^{(1)}_{u,i}t^{-s}W^{(2)}_{i,u}t^{s+1}+\delta_{i,j}\displaystyle\sum_{s \geq 0}  \limits\displaystyle\sum_{u=1}^{i}\limits{(-1)}^{p(u)}W^{(2)}_{u,i}t^{1-s}W^{(1)}_{i,u}t^s\\
&\quad-\delta_{i,j}\displaystyle\sum_{s \geq 0}  \limits\displaystyle\sum_{u=i+1}^{m+n}\limits{(-1)}^{p(u)} W^{(1)}_{u,i}t^{-s-1}W^{(2)}_{i,u}t^{s+2}+\delta_{i,j}\displaystyle\sum_{s \geq 0}  \limits\displaystyle\sum_{u=i+1}^{m+n}\limits{(-1)}^{p(u)}W^{(2)}_{u,i}t^{-s}W^{(1)}_{i,u}t^{s+1},\\
B_{i,j}&={(-1)}^{p(j)}\eqref{aa1}_2+{(-1)}^{p(j)}\eqref{aa2}_1+{(-1)}^{p(j)}\eqref{aa3}_3+{(-1)}^{p(j)}\eqref{aa4}_1\\
&=\delta(i\geq j){(-1)}^{p(i)} \displaystyle\sum_{s \geq 0}  \limits W^{(1)}_{j,i}t^{-s}W^{(2)}_{i,j}t^{s+1}-\delta(i\geq j){(-1)}^{p(i)} \displaystyle\sum_{s \geq 0}  \limits W^{(2)}_{j,i}t^{1-s} W^{(1)}_{i,j}t^s\\
&\quad+\delta(i< j){(-1)}^{p(i)} \displaystyle\sum_{s \geq 0}  \limits W^{(1)}_{j,i}t^{-s-1}W^{(2)}_{i,j}t^{s+2}-\delta(i< j){(-1)}^{p(i)} \displaystyle\sum_{s \geq 0}  \limits W^{(2)}_{j,i}t^{-s} W^{(1)}_{i,j}t^{s+1},\\
C_{i,j}&={(-1)}^{p(j)}\eqref{aa2}_3+{(-1)}^{p(j)}\eqref{aa4}_3\\
&=\delta(i\geq j){(-1)}^{p(i)}(l-1)\alpha \displaystyle\sum_{s \geq 0}  \limits s W^{(1)}_{j,i}t^{-s}W^{(1)}_{i,j}t^s\\
&\quad+\delta(i< j){(-1)}^{p(i)}(l-1)\alpha \displaystyle\sum_{s \geq 0}  \limits  (s+1) W^{(1)}_{j,i}t^{-s-1}W^{(1)}_{i,j}t^{s+1}\\
&={(-1)}^{p(i)}(l-1)\alpha \displaystyle\sum_{s \geq 0}  \limits s W^{(1)}_{j,i}t^{-s}W^{(1)}_{i,j}t^s,\\
D_{i,j}&={(-1)}^{p(j)}\eqref{aa1}_3+{(-1)}^{p(j)}\eqref{aa3}_3\\
&=-\delta_{i,j} (l-1)\alpha\displaystyle\sum_{s \geq 0}  \limits\displaystyle\sum_{u=1}^{i}\limits s{(-1)}^{p(u)} W^{(1)}_{u,i}t^{-s}W^{(1)}_{i,u}t^s\\
&\quad-\delta_{i,j}(l-1)\alpha\displaystyle\sum_{s \geq 0}  \limits\displaystyle\sum_{u=i+1}^{m+n}\limits(s+1){(-1)}^{p(u)} W^{(1)}_{u,i}t^{-s-1}W^{(1)}_{i,u}t^{s+1},\\
\tilde{E}_{i,j}&={(-1)}^{p(j)}\eqref{aa1}_4+{(-1)}^{p(j)}\eqref{aa2}_4\\
&={(-1)}^{p(i)+p(j)} (l-1)(lc-1)\displaystyle\sum_{s \geq 0}  \limits sW^{(1)}_{i,i}t^{-s}W^{(1)}_{j,j}t^s\\
&\quad-{(-1)}^{p(i)+p(j)}(l-1)(lc-1)\displaystyle\sum_{s \geq 0}  \limits sW^{(1)}_{j,j}t^{-s}W^{(1)}_{i,i}t^s.
\end{align*}
Then, we can rewrite $[X_i,{(-1)}^{p(j)}W^{(2)}_{j,j}t]$ as $A_{i,j}+B_{i,j}+C_{i,j}+D_{i,j}+\tilde{E}_{i,j}$.
By exchanging $i$ and $j$, we find that $[X_j,{(-1)}^{p(i)}W^{(2)}_{i,i}t]$ is equal to $A_{j,i}+B_{j,i}+C_{j,i}+D_{j,i}+\tilde{E}_{j,i}.$
We find that the left hand side of \eqref{equat} is equal to
\begin{align*}
\eqref{equat1}+A_{i,j}+B_{i,j}+C_{i,j}+D_{i,j}+\tilde{E}_{i,j}-(A_{j,i}+B_{j,i}+C_{j,i}+D_{j,i}+\tilde{E}_{j,i})+\eqref{equat2}.
\end{align*}
By the definition of $A_{i,j}$ and $D_{i,j}$, we have
\begin{equation}\label{ga0}
A_{i,j}-A_{j,i}=0,\qquad D_{i,j}-D_{j,i}=0.
\end{equation}
By direct computation, we obtain
\begin{gather}
C_{i,j}-C_{j,i}+\eqref{equat1}_7+\eqref{equat1}_8=0,\label{ga1}
\end{gather}
where we denote the $i$-th term of \eqref{equat1} by $\eqref{equat1}_i$. Hence, by \eqref{ga0}, it is enough to obtain the following two relations
\begin{gather}
B_{i,j}-B_{j,i}+\eqref{equat1}_1+\eqref{equat1}_2+\eqref{equat1}_3+\eqref{equat1}_4+\eqref{equat1}_5+\eqref{equat1}_6=0,\label{ga2}\\
\tilde{E}_{i,j}-\tilde{E}_{j,i}+\eqref{equat1}_9+\eqref{equat2}=0.\label{ga3}
\end{gather}

First, we show that \eqref{ga2} holds. Let us compute $B_{i,j}-B_{j,i}$. When $i=j$, it is equal to zero and \eqref{ga2} holds. Suppose that $i>j$. 
Then, we can rewrite $B_{i,j}-B_{j,i}$ as
\begin{align}
&{(-1)}^{p(i)} \displaystyle\sum_{s \geq 0}  \limits W^{(1)}_{j,i}t^{-s}W^{(2)}_{i,j}t^{s+1}-{(-1)}^{p(i)} \displaystyle\sum_{s \geq 0}  \limits W^{(2)}_{j,i}t^{1-s} W^{(1)}_{i,j}t^s\nonumber\\
&\quad-{(-1)}^{p(j)} \displaystyle\sum_{s \geq 0}  \limits W^{(1)}_{i,j}t^{-s-1}W^{(2)}_{j,i}t^{s+2}+{(-1)}^{p(j)} \displaystyle\sum_{s \geq 0}  \limits W^{(2)}_{i,j}t^{-s} W^{(1)}_{j,i}t^{s+1}.\label{bad1}
\end{align}
By Corollary~\ref{COR}, we obtain
\begin{align}
&\quad{(-1)}^{p(i)} \displaystyle\sum_{s \geq 0}  \limits W^{(1)}_{j,i}t^{-s}W^{(2)}_{i,j}t^{s+1}+{(-1)}^{p(j)} \displaystyle\sum_{s \geq 0}  \limits W^{(2)}_{i,j}t^{-s} W^{(1)}_{j,i}t^{s+1}\nonumber\\
&={(-1)}^{p(i)} \displaystyle\sum_{s \geq 0}  \limits W^{(1)}_{j,i}t^{-s-1}W^{(2)}_{i,j}t^{s+2}+{(-1)}^{p(j)} \displaystyle\sum_{s \geq 0}  \limits W^{(2)}_{i,j}t^{1-s} W^{(1)}_{j,i}t^{s}\nonumber\\
&\quad+{(-1)}^{p(i)}W^{(2)}_{i,i}t-{(-1)}^{p(j)}W^{(2)}_{j,j}t\label{bad2}
\end{align}
Appling \eqref{bad2} to \eqref{bad1}, we obtain
\begin{align*}
&\quad B_{i,j}-B_{j,i}\\
&={(-1)}^{p(i)} \displaystyle\sum_{s \geq 0}  \limits W^{(1)}_{j,i}t^{-s-1}W^{(2)}_{i,j}t^{s+2}-{(-1)}^{p(i)} \displaystyle\sum_{s \geq 0}  \limits W^{(2)}_{j,i}t^{1-s} W^{(1)}_{i,j}t^s\nonumber\\
&\quad-{(-1)}^{p(j)} \displaystyle\sum_{s \geq 0}  \limits W^{(1)}_{i,j}t^{-s-1}W^{(2)}_{j,i}t^{s+2}+{(-1)}^{p(j)} \displaystyle\sum_{s \geq 0}  \limits W^{(2)}_{i,j}t^{1-s} W^{(1)}_{j,i}t^{s}\\
&\quad+{(-1)}^{p(i)}W^{(2)}_{i,i}t-{(-1)}^{p(j)}W^{(2)}_{j,j}t.
\end{align*}
We have shown that \eqref{ga2} holds.

Finally, let us compute the left hand side of \eqref{ga3}. 
By direct computation, we obtain
\begin{align*}
&\quad\tilde{E}_{i,j}-\tilde{E}_{j,i}\\
&=2{(-1)}^{p(i)+p(j)}(l-1)(lc-1) \displaystyle\sum_{s \geq 0}  \limits sW^{(1)}_{i,i}t^{-s}W^{(1)}_{j,j}t^s\\
&\quad-2{(-1)}^{p(i)+p(j)}(l-1)(lc-1)\displaystyle\sum_{s \geq 0}  \limits sW^{(1)}_{j,j}t^{-s}W^{(1)}_{i,i}t^s.
\end{align*}
It follows that the left hand side of \eqref{ga3} is equal to
\begin{align*}
&2{(-1)}^{p(i)+p(j)}(l-1)(lc-1)\displaystyle\sum_{s \geq 0}  \limits sW^{(1)}_{i,i}t^{-s}W^{(1)}_{j,j}t^s\\
&\quad-2{(-1)}^{p(i)+p(j)}(l-1)(lc-1)\displaystyle\sum_{s \geq 0}  \limits sW^{(1)}_{j,j}t^{-s}W^{(1)}_{i,i}t^s\\
&\quad-{(-1)}^{p(i)+p(j)}\{(l-1)^2c-(l-1)\}\sum_{s \geq 0} (-sW^{(1)}_{j,j}t^{-s}W^{(1)}_{i,i}t^s+sW^{(1)}_{i,i}t^{-s}W^{(1)}_{j,j}t^s)\\
&\quad-{(-1)}^{p(i)+p(j)}l(lc-1)\sum_{s\geq0}s\{W^{(1)}_{i,i}t^{-s}W^{(1)}_{j,j}t^{s}-W^{(1)}_{j,j}t^{-s}W^{(1)}_{i,i}t^{s}\}\\
&\quad-{(-1)}^{p(i)+p(j)}\sum_{s\geq0}s\{W^{(1)}_{i,i}t^{-s}W^{(1)}_{j,j}t^{s}-W^{(1)}_{j,j}t^{-s}W^{(1)}_{i,i}t^{s}\}\\
&=-{(-1)}^{p(i)+p(j)}c\sum_{s \geq 0} (-sW^{(1)}_{j,j}t^{-s}W^{(1)}_{i,i}t^s+sW^{(1)}_{i,i}t^{-s}W^{(1)}_{j,j}t^s).
\end{align*}
Since $c=0$, this is equal to zero. Thus, \eqref{ga3} holds. We have shown that $[\Phi(H_{i,1}),\Phi(H_{j,1})]=0$.
\end{proof}
Since we have proved Claim~\ref{Claim1234} and Claim~\ref{Claim1236}, we have proven that $\Phi$ is compatible with the defining relations of the affine super Yangian.
\end{proof}
Next, let us show that $\Phi$ is essentially surjective when $\alpha\neq0$.
\begin{Theorem}\label{Main2}
The image of $\Phi$ is dense in $\mathcal{U}(\mathcal{W}^{k}(\mathfrak{gl}(ml|nl),(l^{(m|n)})))$ provided that $\alpha$ is nonzero.
\end{Theorem}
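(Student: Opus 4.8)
The plan is to show that the closure $A:=\overline{\operatorname{Im}\Phi}$ contains every mode of a strong generating set of $\mathcal{W}^{k}$. Since $\alpha\neq0$, Theorem~\ref{Tinf} gives that $\mathcal{W}^{k}(\mathfrak{gl}(ml|nl),(l^{(m|n)}))$ is generated by the $W^{(1)}_{i,j}$ and $W^{(2)}_{i,j}$, and hence---by the presentation of $\mathcal{U}(V)$ via the Borcherds Lie algebra and relation \eqref{241} (Definition~\ref{Defi}), which rewrites modes of normally ordered products through products of modes of generators---$\mathcal{U}(\mathcal{W}^{k})$ is topologically generated by $\{W^{(1)}_{i,j}t^{s},W^{(2)}_{i,j}t^{s}\mid s\in\mathbb{Z}\}$. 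So it is enough to deposit all these modes in $A$.

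The easy part is immediate from Theorem~\ref{Main} and Lemma~\ref{Lem1}: the degree-$0$ images $\Phi(X^{\pm}_{i,0}),\Phi(H_{i,0})$ are $\xi$ of the Chevalley generators of $\widehat{\mathfrak{sl}}(m|n)\subset\widehat{\mathfrak{gl}}(m|n)^{\kappa}$, so their iterated commutators (computed by Corollary~\ref{COR}) put $\xi(\widehat{\mathfrak{sl}}(m|n))$ into $A$---that is, all off-diagonal $W^{(1)}_{i,j}t^{s}$ and every supertraceless diagonal combination. What remains---the ``super-Heisenberg'' modes $\sum_{i}(-1)^{p(i)}W^{(1)}_{i,i}t^{r}$, all $W^{(2)}$-modes, and the trace direction $\sum_{i}(-1)^{p(i)}W^{(2)}_{i,i}t^{r}$---is entangled: the degree-$1$ images $\Phi(X^{\pm}_{i,1}),\Phi(H_{i,1})$ are sums of the genuine $W^{(2)}$-modes we want (e.g.\ $W^{(2)}_{i+1,i}t$ and $(-1)^{p(i)}W^{(2)}_{i,i}t-(-1)^{p(i+1)}W^{(2)}_{i+1,i+1}t$) and of quadratic $W^{(1)}$-expressions that themselves involve the still-unknown diagonal $W^{(1)}$-modes.

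To disentangle these I would run a simultaneous induction on conformal weight, modeled on the surjectivity proof of the evaluation map in \cite{U3} (applicable here because $\alpha\neq0$ makes the shifted parameter $\tilde{\ve}_{1}=l\alpha/(m-n)$ nonzero, so that, via the decomposition $\Phi(g)=\widetilde{\ev}(g)+(\text{$W^{(2)}$-modes})$ of the proof of Theorem~\ref{Main} and Lemma~\ref{Claim1211}, $\Phi$ is a deformation of $\xi\circ\ev_{\tilde{\ve}_{1},\tilde{\ve}_{2}}$). At each stage the genuine $W^{(2)}$-modes are read off from the linear $W^{(2)}$-parts of the degree-$1$ generators and propagated to all off-diagonal $W^{(2)}_{i,j}t^{s}$ by $\operatorname{ad}$ of the $W^{(1)}$-modes through the first lines of Corollary~\ref{COR}; simultaneously the missing diagonal $W^{(1)}$-modes are liberated by the $\alpha$-proportional term of the same corollary, since for $a\neq j$
\[
[W^{(1)}_{j,a}t^{s},W^{(2)}_{a,j}t^{u}]=W^{(2)}_{a,a}t^{s+u}-(-1)^{p(a)+p(j)}W^{(2)}_{j,j}t^{s+u}+s(l-1)\alpha\,W^{(1)}_{a,a}t^{s+u-1}+(\text{scalar}),
\]
so two such brackets with the same $s+u$ but different $s$ isolate $(l-1)\alpha\,W^{(1)}_{a,a}t^{r}$, which lies in $A$ precisely because $\alpha\neq0$ and $l\geq2$. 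Finally the trace direction of $W^{(2)}$ is obtained from the off-diagonal OPEs $(W^{(2)}_{i,j})_{(r)}W^{(2)}_{j,i}$ $(i\neq j)$, the analogue of Lemma~\ref{Lem4}, whose diagonal output carries this direction with a nonzero $\alpha$-dependent coefficient. Once the induction closes, all required modes lie in $A$ and the first paragraph yields $A=\mathcal{U}(\mathcal{W}^{k})$.

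The main obstacle is exactly this coupled extraction: the two ``super-Heisenberg'' directions of $W^{(1)}$ and $W^{(2)}$ are invisible to $\widehat{\mathfrak{sl}}(m|n)$ and must be produced by commutators whose relevant structure constants are $\alpha$-proportional, while the quadratic residuals in the degree-$1$ generators force one to recover the $W^{(1)}$- and $W^{(2)}$-modes together rather than in sequence. Making the induction precise, and checking that the generation argument of \cite{U3} survives the passage from $U(\widehat{\mathfrak{gl}}(m|n)^{\mathrm{str}})_{\mathrm{comp}}$ to $\mathcal{U}(\mathcal{W}^{k})$ despite the diagonal cocycle discrepancy (whose only trace is the harmless scalar shift in the commutator \eqref{equat2}), is where the real work concentrates; the hypothesis $\alpha\neq0$ enters both here and already in Theorem~\ref{Tinf}.
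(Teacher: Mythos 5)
Your skeleton matches the paper's: reduce via Theorem~\ref{Tinf} to showing that every mode $W^{(1)}_{i,j}t^s$, $W^{(2)}_{i,j}t^s$ lies in the closure $A$; obtain the off-diagonal $W^{(1)}$-modes and the supertraceless diagonal combinations from the degree-zero images via Lemma~\ref{Lem1}; and isolate the missing diagonal $W^{(1)}$-modes from the $s$-dependence of the $\alpha$-proportional term in Corollary~\ref{COR} --- your displayed formula is exactly the mechanism behind the paper's identity \eqref{equat432}. But there is a genuine gap at this central step. Your extraction bracket $[W^{(1)}_{j,a}t^{s},W^{(2)}_{a,j}t^{u}]$ presupposes that the \emph{bare} off-diagonal mode $W^{(2)}_{a,j}t^{u}$ already lies in $A$, and, as you yourself note, at this stage only the combination ``$W^{(2)}_{a,j}t^{u}$ plus a quadratic $W^{(1)}$-residual containing the unknown diagonal modes'' is available. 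Your proposed fix --- a ``simultaneous induction on conformal weight'' modeled on \cite{U3} --- is not a workable plan as stated: the residuals are infinite sums of the form $\sum_{s\geq0}W^{(1)}_{j,j}t^{-s}W^{(1)}_{i+1,j}t^{s}$, which involve modes of every degree at once and both conformal weights simultaneously, so there is no filtration by weight or by mode degree in which they sit strictly below the term you want to extract, and no base case from which such an induction could start. The paper needs no induction at all: it brackets $W^{(1)}_{q,r}t^{s-1}$ against the \emph{intact} combination coming from $\Phi(H_{r,1})$ (already known to lie in $A$), obtaining $-(-1)^{p(r)}W^{(2)}_{q,r}t^{s}+P^{s}_{q,r}\in A$ with $P^{s}_{q,r}$ an explicit quadratic expression in $W^{(1)}$-modes, and then runs your difference-of-brackets trick on this combination, computing the contribution of $P^{s}_{q,r}$ separately by Lemma~\ref{Lem1}; the unknown $W^{(2)}$-terms cancel between the two brackets and what survives is $\alpha W^{(1)}_{q,q}t^{s-1}$ modulo elements already in $A$. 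Carrying the quadratic residual through the commutator, rather than trying to induct it away, is the missing idea.

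There is a second, smaller gap at the end. For the trace direction of $W^{(2)}$ you invoke an OPE $(W^{(2)}_{i,j})_{(r)}W^{(2)}_{j,i}$ $(i\neq j)$ that is computed nowhere, and simply assert that its diagonal output carries a nonzero $\alpha$-dependent coefficient. The paper instead uses only the already-proved Lemma~\ref{Lem4}: extracting the $(1)$-product via \eqref{241} from brackets of elements known to lie in $A$, one finds for $i\neq j$ that the $(1)$-product of $(-1)^{p(i)}W^{(2)}_{i,i}-(-1)^{p(j)}W^{(2)}_{j,j}$ with itself equals $-2\alpha\bigl(W^{(2)}_{i,i}+W^{(2)}_{j,j}\bigr)t^{s}$ modulo known terms. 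Note moreover that the sum $W^{(2)}_{i,i}+W^{(2)}_{j,j}$ and the difference $(-1)^{p(i)}W^{(2)}_{i,i}-(-1)^{p(j)}W^{(2)}_{j,j}$ are proportional whenever $p(i)\neq p(j)$; separating the individual diagonal modes therefore requires a pair with $p(i)=p(j)$, which is exactly where the hypothesis $m,n\geq2$ or $m\geq3$, $n=0$ enters. Your argument never addresses this point, so even granting your unverified OPE, the final separation step is incomplete.
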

\begin{proof}
Suppose that $\alpha\neq0$. By Theorem~\ref{Tinf}, it is enough to show that the completion of the image of $\Phi$ contains $W^{(1)}_{i,j}t^s$ and $W^{(2)}_{i,j}t^s$ for all $1\leq i,j\leq m+n$ and $s\in\mathbb{Z}$. 

First, we show that $W^{(1)}_{j,j}t^s$ is contained in the completion of the image of $\Phi$. By the definition of $\Phi(H_{i,0})$ and $\Phi(X^\pm_{i,0})$, the image of $\Phi$ contains $({(-1)}^{p(i)}W^{(1)}_{i,i}-{(-1)}^{p(j)}W^{(1)}_{j,j})t$ and $W^{(1)}_{i,j}t^s$ for all $i\neq j$ and $s\in\mathbb{Z}$. Then, by the definition of $\Phi(H_{i,1})$, the completion of the image of $\Phi$ contains
\begin{align*}
&({(-1)}^{p(j)}W^{(2)}_{j,j}-{(-1)}^{p(j+1)}W^{(2)}_{j+1,j+1})t\\
&\quad-\sum_{a\geq0}W^{(1)}_{j,j}t^{-a}W^{(1)}_{j,j}t^a+\sum_{a\geq0}W^{(1)}_{j+1,j+1}t^{-a-1}W^{(1)}_{j+1,j+1}t^{a+1}-{(-1)}^{p(e_{j,j+1})}W^{(1)}_{j,j}W^{(1)}_{j+1,j+1}.
\end{align*}
We take $1\leq r,q\leq m+n$ such that $q\neq r,r+1$. Then, by Corollary~\ref{COR}, we have
\begin{align*}
&\quad[W^{(1)}_{q,r}t^{s-1},({(-1)}^{p(r)}W^{(2)}_{r,r}-{(-1)}^{p(r+1)}W^{(2)}_{r+1,r+1})t-\sum_{a\geq0}W^{(1)}_{r,r}t^{-a}W^{(1)}_{r,r}t^a\\
&\qquad\qquad\qquad\qquad\qquad+\sum_{a\geq0}W^{(1)}_{r+1,r+1}t^{-a-1}W^{(1)}_{r+1,r+1}t^{a+1}-{(-1)}^{p(e_{r,r+1})}W^{(1)}_{r,r}W^{(1)}_{r+1,r+1}]\\
&=-{(-1)}^{p(r)}W^{(2)}_{q,r}t^s+\sum_{a\geq0}W^{(1)}_{q,r}t^{-a+s-1}W^{(1)}_{r,r}t^a\\
&\qquad\qquad+\sum_{a\geq0}W^{(1)}_{r,r}t^{-a-1}W^{(1)}_{q,r}t^{a+s}-{(-1)}^{p(e_{r,r+1})}W^{(1)}_{q,r}t^{s-1}W^{(1)}_{r+1,r+1}.
\end{align*}
Let us set $\sum_{a\geq0}\limits W^{(1)}_{q,r}t^{-a+s-1}W^{(1)}_{r,r}t^a+\sum_{a\geq0}\limits W^{(1)}_{r,r}t^{-a-1}W^{(1)}_{q,r}t^{a+s}-{(-1)}^{p(e_{r,r+1})}W^{(1)}_{q,r}t^{s-1}W^{(1)}_{r+1,r+1}$ as $P_{q,r}^s$. By Lemma~\ref{Lem1}, we obtain
\begin{align*}
&\quad[W^{(1)}_{r,q},P_{q,r}^s]-[W^{(1)}_{r,q}t,P_{q,r}^{s-1}]\\
&=-{(-1)}^{p(q)}l\alpha W^{(1)}_{r,r}t^{s-1}+\delta_{s,1}{(-1)}^{p(e_{r,r+1})+p(q)}l\alpha W^{(1)}_{r+1,r+1}\\
&\qquad\qquad\qquad\qquad+{(-1)}^{p(e_{q,r})}W^{(1)}_{q,r}t^{s-1}W^{(1)}_{r,q}-W^{(1)}_{r,q}W^{(1)}_{q,r}t^{s-1}.
\end{align*}
Then, by Lemma~\ref{Lem1} and Corollary~\ref{COR}, we have
\begin{align}
&\quad[W^{(1)}_{r,q},W^{(2)}_{q,r}t^s-{(-1)}^{p(r)}P_{q,r}^s]-[W^{(1)}_{r,q}t,W^{(2)}_{q,r}t^{s-1}-{(-1)}^{p(r)}P_{q,r}^{s-1}]\nonumber\\
&=-(l-1)\alpha W^{(1)}_{q,q}t^{s-1}+{(-1)}^{p(e_{q,r})}l\alpha W^{(1)}_{r,r}t^{s-1}-\delta_{s,1}{(-1)}^{p(e_{q,r+1})}l\alpha W^{(1)}_{r+1,r+1}\nonumber\\
&\quad-{(-1)}^{p(q)}W^{(1)}_{q,r}t^{s-1}W^{(1)}_{r,q}+{(-1)}^{p(r)}W^{(1)}_{r,q}W^{(1)}_{q,r}t^{s-1}\nonumber\\
&=\alpha W^{(1)}_{q,q}t^{s-1}-l\alpha (W^{(1)}_{q,q}t^{s-1}-{(-1)}^{p(e_{q,r})}W^{(1)}_{r,r}t^{s-1})-\delta_{s,1}{(-1)}^{p(e_{q,r+1})}l\alpha W^{(1)}_{r+1,r+1}\nonumber\\
&\quad-{(-1)}^{p(q)}W^{(1)}_{q,r}t^{s-1}W^{(1)}_{r,q}+{(-1)}^{p(r)}W^{(1)}_{r,q}W^{(1)}_{q,r}t^{s-1}.\label{equat432}
\end{align}
We find that $\alpha W^{(1)}_{q,q}t^{s-1}-\delta_{s,1}{(-1)}^{p(e_{q,r+1})}l\alpha W^{(1)}_{r+1,r+1}$
is contained in the completion of the image of $\Phi$ by \eqref{equat432},
we have shown that the completion of the image of $\Phi$ contains $W^{(1)}_{q,q}t^s$. 

Since we have already shown that the completion of the image of $\Phi$ contains $\{W^{(1)}_{i,j}t^s\mid1\leq i,j\leq m+n\}$, we find that the completion of the image of $\Phi$ contains $({(-1)}^{p(i)}W^{(2)}_{i,i}-{(-1)}^{p(j)}W^{(2)}_{j,j})t$ and $W^{(2)}_{i,i\pm1}t$ for all $1\leq i,j\leq m+n$ by the definition of $\Phi(H_{i,1})$ and $\Phi(X^\pm_{i,1})$. 
Since there exists a pair $(i,j)$ such that $p(i)=p(j)$, it is enough to prove that $W^{(2)}_{i,j}t^s\ (i\neq j)$, $({(-1)}^{p(i)}W^{(2)}_{i,i}-{(-1)}^{p(j)}W^{(2)}_{j,j})t^s$, $W^{(1)}_{j,j}t^s$, and $W^{(2)}_{i,i}t^s+W^{(2)}_{j,j}t^s$ are contained in the image of the completion of $\Phi$.
Next, we show that the completion of the image of $\Phi$ contains $W^{(2)}_{i,j}t^s\ (i\neq j)$. By Corollary~\ref{COR}, we have
\begin{gather*}
[W^{(1)}_{i,j}t^{s-1},({(-1)}^{p(j)}W^{(2)}_{j,j}-{(-1)}^{p(j+1)}W^{(2)}_{j+1,j+1})t]=-{(-1)}^{p(j)}W^{(2)}_{i,j}t^s\ (\text{if }i\neq j, j+1),\\
[W^{(1)}_{j+1,j}t^{s-1},({(-1)}^{p(j)}W^{(2)}_{j,j}-{(-1)}^{p(j-1)}W^{(2)}_{j-1,j-1})t]=-{(-1)}^{p(j)}W^{(2)}_{j+1,j}t^s.
\end{gather*}
Thus, $W^{(2)}_{i,j}t^s\ (i\neq j)$ is contained in the completion of the image of $\Phi$. By \eqref{241}, we obtain
\begin{align*}
&[({(-1)}^{p(i)}W^{(2)}_{i,i}-{(-1)}^{p(j)}W^{(2)}_{j,j})t,({(-1)}^{p(i)}W^{(2)}_{i,i}-{(-1)}^{p(j)}W^{(2)}_{j,j})t^s]\\
&\qquad-[({(-1)}^{p(i)}W^{(2)}_{i,i}-{(-1)}^{p(j)}W^{(2)}_{j,j}),({(-1)}^{p(i)}W^{(2)}_{i,i}-{(-1)}^{p(j)}W^{(2)}_{j,j})t^{s+1}]\\
&=({(-1)}^{p(i)}W^{(2)}_{i,i}-{(-1)}^{p(j)}W^{(2)}_{j,j})_{(0)}({(-1)}^{p(i)}W^{(2)}_{i,i}-{(-1)}^{p(j)}W^{(2)}_{j,j})t^{s+1}\\
&\quad+({(-1)}^{p(i)}W^{(2)}_{i,i}-{(-1)}^{p(j)}W^{(2)}_{j,j})_{(1)}({(-1)}^{p(i)}W^{(2)}_{i,i}-{(-1)}^{p(j)}W^{(2)}_{j,j})t^s\\
&\quad-({(-1)}^{p(i)}W^{(2)}_{i,i}-{(-1)}^{p(j)}W^{(2)}_{j,j})_{(0)}({(-1)}^{p(i)}W^{(2)}_{i,i}-{(-1)}^{p(j)}W^{(2)}_{j,j})t^{s+1}\\
&=({(-1)}^{p(i)}W^{(2)}_{i,i}-{(-1)}^{p(j)}W^{(2)}_{j,j})_{(1)}({(-1)}^{p(i)}W^{(2)}_{i,i}-{(-1)}^{p(j)}W^{(2)}_{j,j})t^s.
\end{align*}
By Lemma~\ref{Lem4}, provided that $i\neq j$, it is equal to
\begin{align*}
&\quad-2\alpha(W^{(2)}_{i,i}+W^{(2)}_{j,j})t^s-2{(-1)}^{p(i)}W^{(2)}_{i,i}-2{(-1)}^{p(j)}W^{(2)}_{j,j}+2({(-1)}^{p(i)}W^{(2)}_{i,i}+{(-1)}^{p(j)}W^{(2)}_{j,j})t^s\\
&\qquad+(\text{the terms consisting of $\{W^{(1)}_{i,j}\ (1\leq i,j\leq m+n),\ W^{(2)}_{i,j}\ (i\neq j)\}$})\\
&=-2\alpha(W^{(2)}_{i,i}+W^{(2)}_{j,j})t^s\\
&\quad+(\text{the terms consisting of $\{W^{(1)}_{i,j}\ (1\leq i,j\leq m+n),\ W^{(2)}_{i,j}\ (i\neq j)\}$}).
\end{align*}
Thus, the completion of the image of $\Phi$ contains $W^{(2)}_{i,i}t^s+W^{(2)}_{j,j}t^s$. 
\end{proof}
We obtain the following theorem in the similar proof as that of Theorem~\ref{Main} and Theorem~\ref{Main2}.
\begin{Theorem}\label{T198}
We assume that $m\geq3$ and $l\geq2$. Let us set 
\begin{gather*}
\ve_1=\dfrac{k+(l-1)m}{m},\quad\ve_2=-1-\dfrac{k+(l-1)m}{m}.
\end{gather*}
Then, there exists an algebra homomorphism 
\begin{equation*}
\Phi\colon Y_{\ve_1,\ve_2}(\widehat{\mathfrak{sl}}(m))\to \mathcal{U}(\mathcal{W}^{k}(\mathfrak{gl}(ml),(l^{m})))
\end{equation*} 
determined by the same formula as that of Theorem~\ref{Main} under the assumption that $n=0$. Moreover, the image of $\Phi$ is dense in $\mathcal{U}(\mathcal{W}^{k}(\mathfrak{gl}(ml),(l^{m})))$ provided that $k+(l-1)m\neq0$.
\end{Theorem}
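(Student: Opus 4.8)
The plan is to deduce Theorem~\ref{T198} from Theorems~\ref{Main} and~\ref{Main2} by specializing the super construction at $n=0$ with all parities set equal to zero. The first step is to record the dictionary between the two settings. Putting $n=0$ and $p(i)=0$ for every $i$ turns Proposition~\ref{Prop32} into the corresponding finite presentation of $Y_{\ve_1,\ve_2}(\widehat{\mathfrak{sl}}(m))$ stated above: every sign $(-1)^{p(\cdot)}$ collapses to $+1$, the parameter $\ve=-(m-n)\ve_2$ becomes $-m\ve_2$, and the super-bracket becomes the ordinary commutator, so that the relations \eqref{Eq2.11} and \eqref{Eq2.12}, imposed only at the would-be odd nodes $i=0,m$ of $\widehat{\mathfrak{sl}}(m|n)$, are no longer required; the surviving relations are exactly those of the non-super presentation. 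With $\ve_1=\frac{k+(l-1)m}{m}$ and $\ve_2=-1-\frac{k+(l-1)m}{m}$ one has $\alpha=k+(l-1)m$ and $\ve_1=\frac{\alpha}{m}$, which is precisely the hypothesis of Theorem~\ref{Main} with $m-n$ replaced everywhere by $m$.

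Next I would check that every ingredient entering the proof of Theorem~\ref{Main} survives the specialization. Theorem~\ref{T306} already yields the free generators $W^{(r)}_{i,j}$ for all $m\neq n$ with $m+n\geq2$, a range that contains $(m,0)$ for $m\geq3$; the OPE computations of Section~4, namely Lemma~\ref{Lem2}, Lemma~\ref{Lem3}, Lemma~\ref{Lem4} and Corollary~\ref{COR}, are stated for arbitrary $m,n$, and setting $n=0$ simply removes every odd index. The evaluation map of Theorem~\ref{thm:main} specializes, as noted after the non-super Definition, to the map of \cite{Gu1} and \cite{K1}; consequently the auxiliary assignment $\widetilde{\ev}$ and the compatibility Lemma~\ref{Claim1211} carry over verbatim, now with $\tilde{\ve}_1=\frac{l\alpha}{m}$.

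With this in place I would rerun Claims~\ref{Claim1233}--\ref{Claim1236} in the specialized setting. Each of them reduces the relation to be verified to a combination of Lemma~\ref{Lem1}, Corollary~\ref{COR} and Lemma~\ref{Claim1211}, none of which uses $n\geq2$ or $m\neq n$ beyond $\alpha\neq0$; in particular the final cancellation of Claim~\ref{Claim1236}, equation~\eqref{ga3}, relies only on $c=0$, which we continue to assume. For the density statement I would invoke Theorem~\ref{Tinf}, whose hypotheses $\alpha=k+(l-1)m\neq0$, $m\neq n$ and $m+n\geq2$ all hold under $n=0$ and $m\geq3$, and then repeat the argument of Theorem~\ref{Main2} to produce $W^{(1)}_{i,j}t^s$ and $W^{(2)}_{i,j}t^s$ in the completion of the image.

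The one point deserving genuine care is the reason for requiring $m\geq3$ rather than $m\geq2$: the affine Yangian $Y_{\ve_1,\ve_2}(\widehat{\mathfrak{sl}}(2))$ has Cartan entries $a_{01}=a_{10}=-2$ and hence different Serre relations, so the specialization of Proposition~\ref{Prop32} is literally the non-super affine Yangian only for $m\geq3$. Beyond fixing this dictionary, I expect no substantive obstacle: once $n=0$, $p\equiv0$ and $c=0$ are imposed, the entire proof is a transcription of Theorems~\ref{Main} and~\ref{Main2}.
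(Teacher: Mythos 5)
Your proposal is correct and takes essentially the same approach as the paper, which proves Theorem~\ref{T198} in one line by asserting that the proofs of Theorem~\ref{Main} and Theorem~\ref{Main2} carry over to the case $n=0$ with all parities set to zero. The details you supply (the specialization of Proposition~\ref{Prop32}, the validity of Theorem~\ref{T306}, Theorem~\ref{Tinf} and the OPE lemmas at $(m,0)$, the use of the non-super evaluation map of \cite{Gu1} and \cite{K1} in place of Theorem~\ref{thm:main} for Lemma~\ref{Claim1211}, and the role of $m\geq3$ in avoiding the different Cartan matrix of $Y_{\ve_1,\ve_2}(\widehat{\mathfrak{sl}}(2))$) are precisely what that one-line proof leaves implicit.
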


\section{Stukopin's Yangians and rectangular \\\qquad\qquad\qquad finite $W$-superalgebras of type $A$}

In this section, we explain how the homomorphism $\Phi$ deduce the map from the Stukopin's Yangian to rectangular finite $W$-superalgebras of type $A$.

First, let us recall the definition of the Zhu algebra (\cite{Zhu}).
Let $V$ be a vertex algebra and $H\in\End(V)$ be the Hamiltonian of $V$. We can define a degree on $\mathcal{U}(V)$ by
\begin{equation*}
\text{deg}(v\otimes t^s)=r+s+1\text{ if }H(v)=-rv.
\end{equation*}
We denote the set of degree $r$ elements of $\mathcal{U}(V)$ by $\mathcal{U}(V)_r$.
In the case that $V$ is a rectangular $W$-superalgebra $W^k(\mathfrak{gl}(ml|nl),(l^{(m|n)}))$, we obtain
$\text{deg}(W^{(r)}_{i,j}t^s)=s-r+1$.

By Theorem~A.2.11 in \cite{NT}, the Zhu algebra of $V$ can be defined as an associative algebra 
\begin{equation*}
Zh(V)=\mathcal{U}(V)_0/\sum_{r>0}\limits\mathcal{U}(V)_{-r}\mathcal{U}(V)_r.
\end{equation*}
The finite $W$-superalgebra is defined by \cite{Pr}. A finite $W$-superalgebra $W^{\text{fin}}(\mathfrak{g},f)$ is an associative algebra associated with a finite dimensional reductive Lie superalgebra $\mathfrak{g}$ and its even nilpotent element $f$.
By \cite{A1}, \cite{FKW}, and \cite{DSK1} a finite $W$-superalgebra $W^{\text{fin}}(\mathfrak{g},f)$ is the Zhu algebra of the $W^{k}(\mathfrak{g},f)$. In the case when $\mathfrak{g}=\mathfrak{gl}(ml|nl)$ and $f$ is a nilpotent element whose Jordan block is of type $(l^{(m|n)})$, we call $W^{\text{fin}}(\mathfrak{g},f)$ the rectangular finite $W$-superalgebra and denote it by  $W^{\text{fin}}(\mathfrak{gl}(ml|nl),(l^{(m|n)}))$.

Peng \cite{Pe} constructed a surjective homomorphism from shifted super Yangians to finite $W$-superalgebras of type $A$.
Especially, in the rectangular case, he gave a surjective homomorphism from the Nazarov's Yangian (\cite{Na}) to the rectangular finite $W$-superalgebras.
The definition of the Nazarov's Yangian is as follows.
\begin{Definition}
The Nazarov's Yangian $Y(\mathfrak{gl}(m|n))$ is an associative superalgebra whose generators are $\{t^{(r)}_{i,j}\mid r\geq0, 1\leq i,j\leq m+n\}$ and defining relations are
\begin{gather*}
t^{(0)}_{i,j}=\delta_{i,j},\\
[t^{(r+1)}_{i,j},t^{(s)}_{u,v}]-[t^{(r)}_{i,j},t^{(s+1)}_{u,v}]=(-1)^{p(i)p(j)+p(i)p(u)+p(j)p(u)}(t^{(r)}_{u,j}t^{(s)}_{i,v}-t^{(s)}_{u,j}t^{(r)}_{i,v}),
\end{gather*}
where $t^{(r)}_{i,j}$ is $\begin{cases}
\text{even}&\text{if }p(i)+p(j)=0,\\
\text{odd}&\text{if }p(i)+p(j)=1.
\end{cases}$
\end{Definition}
Let $\tilde{p}$ be a natural projection from $\mathcal{U}(W^k(\mathfrak{gl}(ml|nl),(l^{(m|n)})))_0$ to $\text{Zhu}(W^k(\mathfrak{gl}(ml|nl),(l^{(m|n)})))$. Then, $\{\tilde{p}(W^{(r)}_{i,j}t^{r-1})\mid1\leq i,j\leq m+n,1\leq r\leq l\}$ becomes the strong generators of the finite $W$-superalgebra $W^{\text{fin}}(\mathfrak{gl}(ml|nl),(l^{(m|n)}))$.
The homomorphism given by Peng is written down as follows;
\begin{equation*}
\widehat{\Phi}\colon Y(\mathfrak{gl}(m|n))\to W^{\text{fin}}(\mathfrak{gl}(ml|nl),(l^{(m|n)})),
\end{equation*}
determined by $t^{(r)}_{i,j}\mapsto \begin{cases}
{(-1)}^{p(i)}\tilde{p}(W^{(r)}_{i,j}t^{r-1})&\text{ if }r\leq l,\\
0&\text{ if }r>l.
\end{cases}$

The Nazarov's Yangian has a subalgebra which is a deformation of the universal enveloping algebra of the current algebra associated with $\mathfrak{sl}(m|n)$. The subalgebra is called the Stukopin's Yangian (\cite{S}). We recall the definition of the Stukopin's Yangian (see \cite{G}).
\begin{Definition}
The Sukopin's Yangian $Y_{\hbar}(\mathfrak{sl}(m|n))$ is the associative superalgebra over $\mathbb{C}$ generated by $x_{i,r}^{+}, x_{i,r}^{-}, h_{i,r}$ $(i \in \{1,\cdots, m+n-1\}, r \in \mathbb{Z}_{\geq 0})$ with two parameters $\hbar\in \mathbb{C}$ subject to the following defining relations:
\begin{gather}
	[h_{i,r}, h_{j,s}] = 0, \\
	[x_{i,r}^{+}, x_{j,s}^{-}] = \delta_{ij} h_{i, r+s}, \\
	[h_{i,0}, x_{j,r}^{\pm}] = \pm a_{ij} x_{j,r}^{\pm},\\
	[h_{i, r+1}, x_{j, s}^{\pm}] - [h_{i, r}, x_{j, s+1}^{\pm}] 
	= \pm a_{ij} \dfrac{\hbar}{2} \{h_{i, r}, x_{j, s}^{\pm}\},\\
	[x_{i, r+1}^{\pm}, x_{j, s}^{\pm}] - [x_{i, r}^{\pm}, x_{j, s+1}^{\pm}] 
	= \pm a_{ij}\dfrac{\hbar}{2} \{x_{i, r}^{\pm}, x_{j, s}^{\pm}\},\\
	\sum_{w \in \mathfrak{S}_{1 + |a_{ij}|}}[x_{i,r_{w(1)}}^{\pm}, [x_{i,r_{w(2)}}^{\pm}, \dots, [x_{i,r_{w(1 + |a_{ij}|)}}^{\pm}, x_{j,s}^{\pm}]\dots]] = 0\  (i \neq j),\\
	[x^\pm_{m,r},x^\pm_{m,s}]=0,\\
	[[x^\pm_{m-1,r},x^\pm_{m,0}],[x^\pm_{m,0},x^\pm_{m+1,s}]]=0.,
\end{gather}
where $x^\pm_{m,r}$ is
\end{Definition}
Similarly to the affine super Yangian,
we note that $Y_{\hbar}(\mathfrak{sl}(m|n))$ is generated by $\{h_{i,0},x^\pm_{i,0},h_{i,1}\}$. For $\hbar\neq0$, the embedding $\iota_\hbar\colon Y_{\hbar}(\mathfrak{sl}(m|n))\to Y(\mathfrak{gl}(m|n))$ is given by 
\begin{gather*}
\iota_\hbar(h_{i,0})=t^{(1)}_{i,i}-t^{(1)}_{i+1,i+1},\quad \iota_\hbar(x^+_{i,0})={(-1)}^{p(i)}t^{(1)}_{i,i+1},\quad \iota_\hbar(x^-_{i,0})= {(-1)}^{p(E_{i,i+1})}t^{(1)}_{i+1,i},\\
\begin{align*}
\iota_\hbar(h_{i,1})&=
-\hbar t^{(2)}_{i,i}+\hbar t^{(2)}_{i+1,i+1}\\
&\quad-\dfrac{(i-2\delta(i\geq m+1)(i-m))}{2}\hbar(t^{(1)}_{i,i}-t^{(1)}_{i+1,i+1}) -\hbar t^{(1)}_{i,i}t^{(1)}_{i+1,i+1} \\
&\quad+ \hbar\displaystyle\sum_{u=1}^{i}\limits t^{(1)}_{i,u}t^{(1)}_{u,i}-\hbar\displaystyle\sum_{u=1}^{i}\limits t^{(1)}_{i+1,u}t^{(1)}_{u,i+1}.
\end{align*}
\end{gather*}

Setting $\ve_1=\ve_2$, there exists a natural homomorphism $\omega$ from the Stukopin's Yangian to the affine super Yangian determined by
\begin{gather*}
\omega(h_{i,0})=H_{i,0},\qquad\omega(x^\pm_{i,0})=X^\pm_{i,0}.
\end{gather*}
Assuming that $\ve_1=\ve_2=-\dfrac{1}{2}$,
by the relation $\text{deg}(W^{(r)}_{i,j}t^s)=s-r+1$, we find that the image of $\Phi\circ\omega$ is contained in $\mathcal{U}(W^k(\mathfrak{gl}(ml|nl),(l^{(m|n)})))_0$. Then, we obtain a homomorphism
\begin{equation*}
\tilde{\Phi}=p\circ\Phi\circ\omega\colon Y_{-1}(\mathfrak{sl}(m|n))\to\text{Zh}(W^k(\mathfrak{gl}(ml|nl),(l^{(m|n)}))).
\end{equation*}
By $\text{deg}(W^{(r)}_{i,j}t^s)=s-r+1$, we can explicitly write down $\tilde{\Phi}$ as follows;
\begin{gather*}
\tilde{\Phi}(H_{i,0})=
{(-1)}^{p(i)}\tilde{p}(W^{(1)}_{i,i})-{(-1)}^{p(i+1)}\tilde{p}(W^{(1)}_{i+1,i+1}),
\\
\tilde{\Phi}(X^+_{i,0})=
\tilde{p}(W^{(1)}_{i+1,i}),
\qquad \tilde{\Phi}(X^-_{i,0})=
{(-1)}^{p(i)}\tilde{p}(W^{(1)}_{i,i+1}),
\end{gather*}
\begin{align*}
\tilde{\Phi}(H_{i,1})&=
{(-1)}^{p(i)}\tilde{p}(W^{(2)}_{i,i}t)-{(-1)}^{p(i+1)}\tilde{p}(W^{(2)}_{i+1,i+1}t)\\
&\quad+\dfrac{i-2\delta(i\geq m+1)(i-m)}{2}({(-1)}^{p(i)}\tilde{p}(W^{(1)}_{i,i})-{(-1)}^{p(i+1)}\tilde{p}(W^{(1)}_{i+1,i+1}))\\
&\quad+{(-1)}^{p(E_{i,i+1})}  \tilde{p}(W^{(1)}_{i,i})\tilde{p}(W^{(1)}_{i+1,i+1})\\
&\quad-{(-1)}^{p(i)}\displaystyle\sum_{u=1}^{i}\limits{(-1)}^{p(u)} \tilde{p}(W^{(1)}_{u,i})\tilde{p}(W^{(1)}_{i,u})+{(-1)}^{p(i+1)}\displaystyle\sum_{u=1}^{i}\limits{(-1)}^{p(u)}\tilde{p}(W^{(1)}_{u,i+1})\tilde{p}(W^{(1)}_{i+1,u}).
\end{align*}
\begin{Theorem}\label{Finite}
We obtain the commutativity $\tilde{\Phi}=\widehat{\Phi}\circ\iota_{-1}$.
\end{Theorem}
\begin{proof}
It is enough to show that
\begin{equation*}
\tilde{\Phi}(x)=\widehat{\Phi}\circ\iota_{-1}(x)\text{ for }x=h_{i,1}, x^\pm_{i,0},h_{i,0}.
\end{equation*}
We only show the case when $x=h_{i,1}$ since other cases are trivial.
By a direct computation, we obtain
\begin{align*}
&\quad\widehat{\Phi}\circ\iota_{-1}(h_{i,1})\\
&=\widehat{\Phi}(t^{(2)}_{i,i})-\widehat{\Phi}(t^{(2)}_{i+1,i+1})\\
&\quad+\dfrac{(i-2\delta(i\geq m+1)(i-m))}{2}(\widehat{\Phi}(t^{(1)}_{i,i})-\widehat{\Phi}(t^{(1)}_{i+1,i+1}))+ \widehat{\Phi}(t^{(1)}_{i,i})\widehat{\Phi}(t^{(1)}_{i+1,i+1})\\
&\quad-\displaystyle\sum_{k=1}^{i}\limits \widehat{\Phi}(t^{(1)}_{i,k})\widehat{\Phi}(t^{(1)}_{k,i})+\displaystyle\sum_{k=1}^{i}\limits\widehat{\Phi}(t^{(1)}_{i+1,k})\widehat{\Phi}(t^{(1)}_{k,i+1})\\
&={(-1)}^{p(i)}\tilde{p}(W^{(2)}_{i,i}t)-{(-1)}^{p(i+1)}\tilde{p}(W^{(2)}_{i+1,i+1}t)\\
&\quad+\dfrac{(i-2\delta(i\geq m+1)(i-m))}{2}({(-1)}^{p(i)} \tilde{p}(W^{(1)}_{i,i})-{(-1)}^{p(i+1)}\tilde{p}(W^{(1)}_{i+1,i+1}))\\
&\quad +{(-1)}^{p(E_{i,i+1})} \tilde{p}(W^{(1)}_{i,i})\tilde{p}(W^{(1)}_{i+1,i+1})\\
&\quad-{(-1)}^{p(i)}\displaystyle\sum_{k=1}^{i}\limits{(-1)}^{p(k)} \tilde{p}(W^{(1)}_{i,k})\tilde{p}(W^{(1)}_{k,i})+{(-1)}^{p(i+1)}\displaystyle\sum_{k=1}^{i}\limits{(-1)}^{p(k)}\tilde{p}(W^{(1)}_{i+1,k})\tilde{p}(W^{(1)}_{k,i+1}).
\end{align*}
This is nothing but $\tilde{\Phi}(h_{i,1})$.
\end{proof}
\begin{Remark}
For $\hbar\neq 0$, $Y_{\hbar}(\mathfrak{sl}(m|n))$ is isomorphic to $Y_{-1}(\mathfrak{sl}(m|n))$. Thus, by Theorem~\ref{Finite}, the homomorphism $\Phi$ deduce the map from $Y_{\hbar}(\mathfrak{sl}(m|n))$ to rectangular finite $W$-superalgebras of type $A$ for all $\hbar\neq 0$.
\end{Remark}

\begin{appendices}
\renewcommand{\thesection}{A\arabic{section}}
\section{The proof of Theorem~\ref{Tinf}}
In this section, we prove Theorem~\ref{Tinf}.
We define a grading on $\mathfrak{b}$ by setting $\text{deg}(x)=j$ if $x\in\mathfrak{b}\cap\mathfrak{g}_{j}$. Since
\begin{equation*}
\{\displaystyle\sum_{s=1}^{l-r}\limits e_{(r+s-1)(m+n)+j,(s-1)(m+n)+i}\mid0\leq r\leq l-1,\ 1\leq i,j\leq m+n\}
\end{equation*}
forms a basis of $\mathfrak{gl}(ml|nl)^f=\{g\in\mathfrak{gl}(ml|nl)|[f,g]=0\}$, it is enough to show that $W^{(1)}_{i,j}$ and $W^{(2)}_{i,j}$ generate the term whose form is
\begin{equation*}
\displaystyle\sum_{s=1}^{l-r}\limits e_{(r+s-1)(m+n)+j,(s-1)(m+n)+i}[-1]+\text{higher terms}
\end{equation*}
for all $0\leq r\leq l-1,\ 1\leq i,j\leq m+n$ by Theorem~4.1 of \cite{KW1}.  
We show that $W^{(1)}_{i,j}$ and $W^{(2)}_{i,j}$ generate these terms by two claims, that is, Claim~\ref{T1} and Claim~\ref{T3}. In Claim~\ref{T1} below, we show that $W^{(1)}_{i,j}$ and $W^{(2)}_{i,j}$ generate the term whose form is
\begin{align*}
&{(-1)}^{p(i)}\sum_{s=1}^{l-r}e_{(r+s-1)(m+n)+i,(s-1)(m+n)+i}[-1]\\
&\qquad\qquad-{(-1)}^{p(i+1)}\sum_{s=1}^{l-r}e_{(r+s-1)(m+n)+i+1,(s-1)(m+n)+i+1}[-1]+\text{higher terms}
\end{align*}
or
\begin{equation*}
\sum_{s=1}^{l-r}e_{(r+s-1)(m+n)+j,(s-1)(m+n)+i}[-1]+\text{higher terms}\ (i\neq j)
\end{equation*}
for all $0\leq r\leq l-1$. In Claim~\ref{T3} below, we prove that $W^{(1)}_{i,j}$ and $W^{(2)}_{i,j}$ generate the term whose form is
\begin{equation*}
\sum_{s=1}^{l-r}e_{(r+s-1)(m+n)+i,(s-1)(m+n)+i}[-1]+\text{higher terms}
\end{equation*}
for all $1\leq r\leq l-1$. 
Since $\displaystyle\sum_{s=1}^{l-0}\limits e_{(0+s-1)(m+n)+i,(s-1)(m+n)+i}[-1]$ is nothing but $W^{(1)}_{i,i}$, Theorem~\ref{Tinf} is derived from Claim~\ref{T1} and Claim~\ref{T3}.

In order to prove Claims \ref{T1} and \ref{T3}, we prepare the following claim.
\begin{Claim}\label{T0}
\textup{(1)}\ The following equation holds for all $0\leq w\leq l-1,\ 1\leq i,j,u,v\leq m+n$;
\begin{align}
&\phantom{{}={}}(\sum_{s=1}^{l-1}e_{s(m+n)+j,(s-1)(m+n)+i}[-1])_{(0)}\sum_{t=1}^{l-w}e_{(w+t-1)(m+n)+u,(t-1)(m+n)+v}[-1]\nonumber\\
&=\delta_{i,u}\sum_{t=1}^{l-w-1}e_{(w+t)(m+n)+j,(t-1)(m+n)+v}[-1]\nonumber\\
&\qquad\qquad-\delta_{j,v}{(-1)}^{p(e_{i,j})p(e_{u,v})}\sum_{t=1}^{l-w-1}e_{(w+t)(m+n)+u,(t-1)(m+n)+i}[-1].\label{nom}
\end{align}
\textup{(2)}\ We obtain
\begin{align}
&\phantom{{}={}}(W^{(1)}_{i,j})_{(0)}(\sum_{s=1}^{l-r}e_{(r+s-1)(m+n)+x,(s-1)(m+n)+y}[-1])\nonumber\\
&=\delta_{i,x}\sum_{s=1}^{l-r}e_{(r+s-1)(m+n)+j,(s-1)(m+n)+y}[-1]\nonumber\\
&\qquad\qquad-\delta_{j,y}{(-1)}^{p(e_{i,j})p(e_{x,y})}\sum_{s=1}^{l-r}e_{(r+s-1)(m+n)+x,(s-1)(m+n)+i}[-1]\label{non}
\end{align}
for all $0\leq r\leq l-1,\ 1\leq i,j,x,y\leq m+n$.
\end{Claim}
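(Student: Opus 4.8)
The plan is to reduce both identities to the elementary supercommutator of matrix units in $\mathfrak{gl}(ml|nl)$, exploiting the affine vertex algebra structure \eqref{OPE1}. The crucial preliminary observation is that every element occurring on the left-hand side of \eqref{nom} and \eqref{non} lies in $\mathfrak{b}=\bigoplus_{j\leq0}\mathfrak{g}_j$: a direct inspection of gradings shows $e_{s(m+n)+j,(s-1)(m+n)+i}\in\mathfrak{g}_{-1}$, $e_{(w+t-1)(m+n)+u,(t-1)(m+n)+v}\in\mathfrak{g}_{-w}$, $W^{(1)}_{i,j}\in\mathfrak{g}_0$ by \eqref{W1}, and $e_{(r+s-1)(m+n)+x,(s-1)(m+n)+y}\in\mathfrak{g}_{-r}$. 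Hence by \eqref{OPE1}, for any two such generators $u[-1],v[-1]$ one has $(u[-1])_{(0)}(v[-1])=[u,v][-1]$, the double-pole term $\kappa(u,v)$ feeding only the $(1)$-product and not the $(0)$-product. Using bilinearity of the $(0)$-product, both statements then reduce to summing the supercommutators $[e_{a,b},e_{c,d}]=\delta_{b,c}e_{a,d}-(-1)^{p(e_{a,b})p(e_{c,d})}\delta_{a,d}e_{c,b}$ of individual matrix units.

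For part (1) I would expand the left-hand side as a double sum over $s$ and $t$ and substitute $a=s(m+n)+j$, $b=(s-1)(m+n)+i$, $c=(w+t-1)(m+n)+u$, $d=(t-1)(m+n)+v$. The condition $b=c$ forces $s=w+t$ together with $i=u$, while $a=d$ forces $s=t-1$ together with $j=v$; the parity prefactor equals $(-1)^{p(e_{i,j})p(e_{u,v})}$, since the parity of a matrix unit depends only on its two inner $(m+n)$-residues. Imposing these constraints and reindexing (the first contribution directly in $t$, the second via $t\mapsto t+1$) yields exactly the two sums on the right-hand side, the only point requiring care being that the constraints truncate the summation to $1\leq t\leq l-w-1$. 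Part (2) proceeds identically, writing $W^{(1)}_{i,j}=\sum_{p=1}^{l}e_{(p-1)(m+n)+j,(p-1)(m+n)+i}[-1]$ from \eqref{W1}; now $\delta_{b,c}$ imposes $p=r+s$ and $i=x$, whereas $\delta_{a,d}$ imposes $p=s$ and $j=y$, and both resulting sums run over the full range $1\leq s\leq l-r$ without further truncation because $W^{(1)}_{i,j}$ already ranges over all $1\leq p\leq l$.

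The only genuine obstacle is the bookkeeping of summation ranges: one must check that after imposing each Kronecker delta the upper index limit collapses correctly (to $l-w-1$ in part (1), staying $l-r$ in part (2)), and that the shift in the $\delta_{a,d}$ contribution realigns the surviving matrix unit with the claimed $e_{(w+t)(m+n)+u,(t-1)(m+n)+i}$. I expect no conceptual difficulty beyond keeping the parity signs and the block-index shifts consistent; both identities are purely computational consequences of \eqref{OPE1} together with the matrix-unit bracket, and no deeper structural input is needed.
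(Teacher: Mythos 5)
Your proof is correct and is exactly the direct computation the paper intends (the paper states that Claim~\ref{T0} ``is proven by direct computation'' and omits the details): using \eqref{OPE1} to identify the $(0)$-product of $u[-1]$ and $v[-1]$ with $[u,v][-1]$ for $u,v\in\mathfrak{b}$, and then expanding the supercommutators of matrix units while tracking block indices, parities, and summation ranges is all that is required. Your bookkeeping is accurate, including the truncation of the range to $1\leq t\leq l-w-1$ in part (1) and the absence of any truncation in part (2).
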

Claim~\ref{T0} is proven by direct computation. We omit the proof. By \eqref{nom} and \eqref{non}, it is easy to obtain the following claim.
\begin{Claim}\label{T1}
\textup{(1)}\ For all $0\leq r\leq l-1$, the elements $W^{(1)}_{i,j}$ and $W^{(2)}_{i,j}$ generate the term whose form is
\begin{equation*}
\displaystyle\sum_{s=1}^{l-r}\limits e_{(r+s-1)(m+n)+i,(s-1)(m+n)+j}[-1]+\text{higher terms}\ (i\neq j).
\end{equation*}
\textup{(2)}\ For all $0\leq r\leq l-1$, the elements $W^{(1)}_{i,j}$ and $W^{(2)}_{i,j}$ generate the term whose form is
\begin{align*}
&{(-1)}^{p(i)}\sum_{s=1}^{l-r}e_{(r+s-1)(m+n)+i,(s-1)(m+n)+i}[-1]\\
&\qquad\qquad-{(-1)}^{p(i+1)}\sum_{s=1}^{l-r}e_{(r+s-1)(m+n)+i+1,(s-1)(m+n)+i+1}[-1]+\text{higher terms}.
\end{align*}
\end{Claim}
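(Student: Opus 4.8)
The plan is to induct on $r$, tracking only lowest-degree components and feeding everything through the two $(0)$-product identities \eqref{nom} and \eqref{non} of Claim~\ref{T0}. Write $Q_r(u,v)=\sum_{s=1}^{l-r}e_{(r+s-1)(m+n)+u,(s-1)(m+n)+v}[-1]$ for the basis vector of $\mathfrak{gl}(ml|nl)^{f}$ attached to $(r,u,v)$; it lies in $\mathfrak{g}_{-r}$, so in the grading $\deg$ of the appendix it is the lowest-degree (most negative) piece, and ``higher terms'' means components of degree $>-r$. The first thing I would record is that this grading is preserved by the $(0)$-product: the grading element $x$ of the $\mathfrak{sl}_2$-triple extends, via $\ad x$, to a derivation of $V^{\kappa}(\mathfrak{b})$ for which every $n$-product is homogeneous, so $\deg\big((Y)_{(0)}Y'\big)=\deg Y+\deg Y'$ on homogeneous elements. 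Consequently, if $Y$ (resp.\ $Y'$) is an element of the $W$-superalgebra whose lowest-degree component is $Q_w(u,v)$ (resp.\ $Q_{w'}(u',v')$), then the lowest-degree component of $(Y)_{(0)}Y'$ is exactly $(Q_w(u,v))_{(0)}Q_{w'}(u',v')$, every correction coming from the ``higher terms'' landing in strictly higher degree. This is the one nonformal point; granting it, the full generators $W^{(1)}_{i,j},W^{(2)}_{i,j}$ may be replaced by their leading terms throughout, and \eqref{nom}, \eqref{non} then compute precisely these leading components.

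For the base case, \eqref{W1} gives $Q_0(u,v)=W^{(1)}_{v,u}$ on the nose, so every $Q_0(u,v)$ is a generator, settling part (1) at $r=0$. To climb from level $r-1$ to level $r$ in part (1) I would apply $(W^{(2)}_{a,a})_{(0)}$ — whose leading term is $\sum_{s=1}^{l-1}e_{s(m+n)+a,(s-1)(m+n)+a}[-1]$ — to a previously constructed element with lowest-degree component $Q_{r-1}(a,d)$ for $a\neq d$. By \eqref{nom} the $\delta_{a,d}$-summand drops out (as $a\neq d$) and the result has lowest-degree component $Q_r(a,d)$; since $Q_{r-1}(a,d)$ is available by the inductive hypothesis whenever $a\neq d$, this produces every off-diagonal $Q_r(a,d)$ up to higher terms, establishing part (1) at level $r$.

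Part (2) at each level $r$ I would then read off from \eqref{non}: applying $(W^{(1)}_{i+1,i})_{(0)}$ to the part-(1) element with lowest-degree component $Q_r(i+1,i)$ activates both Kronecker deltas and yields $Q_r(i,i)-{(-1)}^{p(i)+p(i+1)}Q_r(i+1,i+1)$ up to higher terms; multiplying by ${(-1)}^{p(i)}$ gives exactly ${(-1)}^{p(i)}Q_r(i,i)-{(-1)}^{p(i+1)}Q_r(i+1,i+1)$ plus higher terms, which is the assertion. (Equivalently, for fixed $r$ the $Q_r(i,j)$ carry the adjoint $\mathfrak{gl}(m+n)$-action and \eqref{non} realizes the bracket with a matrix unit, so from a single off-diagonal vector one reaches every off-diagonal vector and every Cartan difference.) This establishes both parts of Claim~\ref{T1} for all $0\le r\le l-1$, which is what feeds into Theorem~\ref{Tinf} through the reduction recalled at the start of the appendix. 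The only genuine obstacle is the grading-compatibility of the first paragraph; the remainder is exactly the sign-and-$\delta$ bookkeeping already packaged in Claim~\ref{T0}.
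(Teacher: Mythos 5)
Your proposal is correct and takes essentially the same route as the paper: part (1) is obtained by iterating $(W^{(2)}_{i,i})_{(0)}$ on $W^{(1)}_{j,i}$ and tracking lowest-degree components through \eqref{nom}, and part (2) by applying a $(W^{(1)})_{(0)}$ matrix-unit action to the resulting off-diagonal element through \eqref{non} so that both Kronecker deltas fire. The only cosmetic differences are that you phrase the iteration as an induction on $r$, you make explicit the grading-compatibility of the $(0)$-product (via the $\operatorname{ad}x$-derivation) that the paper leaves implicit, and you act with $W^{(1)}_{i+1,i}$ on the $(i+1,i)$-element where the paper acts with $W^{(1)}_{i,i+1}$ on the $(i,i+1)$-element.
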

\begin{proof}
First, let us show (1). Since $W^{(2)}_{i,j}$ has the form such that 
\begin{equation*}
\displaystyle\sum_{s=1}^{l-1}\limits e_{s(m+n)+j,(s-1)(m+n)+i}[-1]+\text{degree $0$ terms},
\end{equation*}
we obtain
\begin{align*}
((W^{(2)}_{i,i})_{(0)})^rW^{(1)}_{j,i}=\left((\displaystyle\sum_{s=1}^{l-1}\limits e_{s(m+n)+i,(s-1)(m+n)+i}[-1])_{(0)}\right)^rW^{(1)}_{j,i}+\text{higher terms}
\end{align*}
for all $i\neq j,\ 0\leq r\leq l-1$.
By \eqref{nom}, we have
\begin{align*}
((W^{(2)}_{i,i})_{(0)})^rW^{(1)}_{j,i}=\displaystyle\sum_{s=1}^{l-r}\limits e_{(r+s-1)(m+n)+i,(s-1)(m+n)+j}[-1]+\text{higher terms}.
\end{align*}
Thus, we have proved (1).

Next, let us prove (2). By (1), the element whose form is 
\begin{equation*}
\displaystyle\sum_{s=1}^{l-r}\limits e_{(r+s-1)(m+n)+i,(s-1)(m+n)+i+1}[-1]+\text{higher terms}
\end{equation*}
is generated by $W^{(1)}_{i,j}$ and $W^{(2)}_{i,j}$.
By \eqref{non}, we have
\begin{align*}
&\quad(W^{(1)}_{i,i+1})_{(0)}\big(\displaystyle\sum_{s=1}^{l-r}\limits e_{(r+s-1)(m+n)+i,(s-1)(m+n)+i+1}[-1]+\text{higher terms}\big)\\
&=\sum_{s=1}^{l-r}e_{(r+s-1)(m+n)+i+1,(s-1)(m+n)+i+1}[-1]\\
&\qquad-{(-1)}^{p(e_{i,i+1})}\sum_{s=1}^{l-r}e_{(r+s-1)(m+n)+i,(s-1)(m+n)+i}[-1]+\text{higher terms}.
\end{align*}
Thus, we have proved (2).
\end{proof}
\begin{Claim}\label{T3}
The elements $W^{(1)}_{i,j}$ and $W^{(2)}_{i,j}$ generate the term whose form is 
\begin{equation*}
\displaystyle\sum_{1\leq t\leq l-r}\limits e_{(t+r-1)(m+n)+i,(t-1)(m+n)+i}[-1]+\text{higher terms}
\end{equation*}
for all $1\leq r\leq l-1$.
\end{Claim}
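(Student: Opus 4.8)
The plan is to follow the strategy already used for Claim~\ref{T1}: by Theorem~4.1 of \cite{KW1} it suffices to produce, modulo higher terms, the leading vector
\[
D^{(r)}_i:=\sum_{t=1}^{l-r}e_{(t+r-1)(m+n)+i,(t-1)(m+n)+i}[-1]\in\mathfrak{g}_{-r}
\]
inside the subalgebra generated by the $W^{(a)}_{i,j}$, for each $1\le r\le l-1$ and each $i$. I would argue by induction on $r$. The base case $r=1$ is immediate, since by \eqref{W2} the element $W^{(2)}_{i,i}$ equals $D^{(1)}_i$ plus higher terms. The inductive engine is Claim~\ref{T0}: formula \eqref{nom} shows that the degree-raising operator $(W^{(2)}_{\bullet,\bullet})_{(0)}$ sends a degree-$(-w)$ leading term to a degree-$(-w-1)$ one, so bracketing the off-diagonal generators supplied by Claim~\ref{T1}(1) against the leading part of $W^{(2)}$ produces, at every degree, all the ``root'' combinations $D^{(r)}_i-(-1)^{p(i)+p(j)}D^{(r)}_j$ from \eqref{non}.

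The content of the claim is therefore reduced to a linear-algebra statement about which combinations $\sum_i c_iD^{(r)}_i$ are reachable. Introduce the functional $L\bigl(\sum_i c_iD^{(r)}_i\bigr)=\sum_i(-1)^{p(i)}c_i$. A direct check shows that every difference $D^{(r)}_i-(-1)^{p(i)+p(j)}D^{(r)}_j$, as well as every combination coming out of Claim~\ref{T1}(2), lies in $\ker L$, and that these differences already span $\ker L$, a subspace of codimension one. Thus the induction delivers the whole of $\ker L$ at each degree, and the only missing ingredient is a single element with $L\neq0$, i.e. the ``trace direction''.

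This is the hard part, and it is exactly where the hypothesis $m\neq n$ is used. On the level of leading terms the operations of Claim~\ref{T0} are Lie brackets inside the centralizer $\mathfrak{g}^{f}\cong\mathfrak{gl}(m|n)\otimes\mathbb{C}[z]/(z^l)$, and since $[\mathfrak{gl}(m|n),\mathfrak{gl}(m|n)]=\mathfrak{sl}(m|n)$ precisely when $m\neq n$, brackets can never reach the central (trace) direction for $r\ge2$: the Cartan part of a commutator always lies in $\ker L$. The trace direction must therefore be extracted from the genuinely non-Lie (normally ordered) structure of the generators, for which the input is Lemma~\ref{Lem4}: the products $(W^{(2)}_{i,i})_{(0)}W^{(2)}_{j,j}$ and $(W^{(2)}_{i,i})_{(1)}W^{(2)}_{j,j}$ carry quadratic tails of the shape $(W^{(1)}_{\bullet,\bullet})_{(-1)}W^{(2)}_{\bullet,\bullet}$ whose cross-brackets feed a degree-$(-r)$ diagonal leading term back into the computation. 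I expect the decisive coefficient of this trace contribution to be a nonzero scalar multiple of $(m-n)$ (alongside the central parameter $\alpha$), so that the resulting linear system for the $D^{(r)}_i$ has determinant a power of $m-n$ and is solvable exactly because $m\neq n$.

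Combining the two inputs then finishes the induction: at each degree $r$ the differences give all of $\ker L$ and the trace contribution supplies one further vector off $\ker L$, so together they span $\bigoplus_i\mathbb{C}D^{(r)}_i$, yielding every $D^{(r)}_i$ modulo higher terms; together with Claim~\ref{T1} this proves Theorem~\ref{Tinf}. The main obstacle, as indicated, is the isolation of the trace direction at degrees $r\ge2$: everything else is bookkeeping of \eqref{nom}--\eqref{non} and the induction, whereas this step forces one to look beyond commutators and is the sole place where $m\neq n$ (rather than merely $\alpha\neq0$) is essential.
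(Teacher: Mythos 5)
Your overall skeleton agrees with the paper's: Claim~\ref{T1} supplies the off-diagonal leading terms and the differences, so at each degree $-r$ one only needs a single further element whose leading term has nonzero ``supertrace'' coefficient, and such an element cannot come from $0$-th products of leading terms alone. However, at precisely this decisive step your proposal has a genuine gap, and the guess you substitute for it is wrong. The paper produces the missing direction by an explicit computation, equation \eqref{929}: it applies $(W^{(2)}_{i,i})_{(1)}$ to the already-generated element $(W^{(1)}_{i,i+1})_{(0)}\{(W^{(2)}_{i,i})_{(0)}\}^{r}W^{(1)}_{i+1,i}$ (the degree $-r$ difference), and the $(1)$-product against the degree-zero tail of $W^{(2)}_{i,i}$ in \eqref{W2} --- the quadratic part $\sum{(-1)}^{p(t)+p(e_{i,t})p(e_{j,t})}e^{(r_1)}_{t,i}[-1]e^{(r_2)}_{i,t}[-1]$ and the part $\alpha\sum(s-1)e^{(s)}_{i,i}[-2]$, together with the $\kappa$-pairing --- yields
\begin{equation*}
{(-1)}^{p(e_{i,i+1})}\alpha\sum_{1\leq t\leq l}e_{(t-1)(m+n)+i,(t-r-1)(m+n)+i}[-1]
\end{equation*}
plus a multiple of the known difference plus higher terms. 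The decisive coefficient is therefore $\alpha=k+(l-1)(m-n)$, and the hypothesis that makes the argument close is $\alpha\neq0$, exactly as stated in Theorem~\ref{Tinf}. Your assertion that the trace contribution carries a coefficient proportional to $m-n$, so that the step ``is the sole place where $m\neq n$ (rather than merely $\alpha\neq0$) is essential,'' contradicts the theorem's own hypothesis and the computation: $m\neq n$ and $m+n\geq2$ enter only through the free-generation statement (Theorem~\ref{T306}) and through the existence of the pair of indices $i,i+1$, not as a determinant of the linear system you describe.

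Moreover, the tool you propose for the hard step, Lemma~\ref{Lem4}, cannot do the job. That lemma expresses $(W^{(2)}_{i,i})_{(0)}W^{(2)}_{j,j}$ and $(W^{(2)}_{i,i})_{(1)}W^{(2)}_{j,j}$ back in terms of $W^{(1)}$, $W^{(2)}$ and their $(-1)$-products; it is a closure statement used in Section~5 for the homomorphism $\Phi$, and its outputs have leading terms of degree at most $-2$, so it gives no access to the diagonal leading terms $\sum_t e_{(t+r-1)(m+n)+i,(t-1)(m+n)+i}[-1]$ for general $r\geq2$. What is actually needed, and what constitutes the entire content of the paper's proof of this claim, is the long explicit evaluation of the left-hand side of \eqref{929} (the decomposition into the terms $T_d$, $B_d$ and the verification \eqref{no2021}--\eqref{2022}). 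Since your proposal replaces this computation by an ``expectation'' whose predicted coefficient and predicted hypothesis are both incorrect, the proof is incomplete at its only nontrivial point.
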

\begin{proof}
It is enough to show that
\begin{align}
&\quad(W^{(2)}_{i,i})_{(1)}(W^{(1)}_{i,i+1})_{(0)}\{{(W^{(2)}_{i,i})}_{(0)}\}^{r}W^{(1)}_{i+1,i}\nonumber\\
&={(-1)}^{p(e_{i,i+1})}\alpha\displaystyle\sum_{1\leq t\leq l}\limits e_{(t-1)(m+n)+i,(t-r-1)(m+n)+i}[-1]\nonumber\\
&\quad+{(-1)}^{p(i+1)}r\sum_{1\leq t\leq l}e_{(t-1)(m+n)+i,(t-r-1)(m+n)+i}[-1]\nonumber\\
&\quad-{(-1)}^{p(i)}r\sum_{1\leq t\leq l}e_{(t-1)(m+n)+i+1,(t-r-1)(m+n)+i+1}[-1]+\text{higher terms}\label{929}
\end{align}
since we have already shown that 
\begin{align*}
\sum_{1\leq t\leq l}(e_{(t-1)(m+n)+i,(t-r-1)(m+n)+i}[-1]-{(-1)}^{p(e_{i,i+1})} e_{(t-1)(m+n)+i+1,(t-r-1)(m+n)+i+1}[-1])
\end{align*}
is generated by $W^{(1)}_{i,j}$ and $W^{(2)}_{i,j}$. Let us set 
\begin{align*}
Z=\displaystyle\sum_{1\leq s\leq l-1}\limits e_{s(m+n)+i,(s-1)(m+n)+i}[-1],\quad W=W^{(2)}_{i,i}-Z.
\end{align*}
The element $W^{(2)}_{i,i}$ is the sum of degree $-1$ element $Z$ and degree $0$ element $W$.
We can rewrite the left hand side of \eqref{929} as
\begin{align}
&Z_{(1)}(W^{(1)}_{i,i+1})_{(0)}(Z_{(0)})^{r}W^{(1)}_{i+1,i}+W_{(1)}(W^{(1)}_{i,i+1})_{(0)}(Z_{(0)})^rW^{(1)}_{i+1,i}\nonumber\\
&+\displaystyle\sum_{1\leq d\leq r}\limits Z_{(1)}(W^{(1)}_{i,i+1})_{(0)}(Z_{(0)})^{r-d}W_{(0)}(Z_{(0)})^{d-1}W^{(1)}_{i+1,i}+\text{higher terms}.\label{no7}
\end{align}
In order to simplify the notation, here after, we denote $\displaystyle\sum_{a\leq s\leq l-b}\limits e_{(b+s-1)(m+n)+i,(s-a)(m+n)+j}[-u]$ by $\displaystyle\sum_{1\leq s\leq l}\limits e_{(b+s-1)(m+n)+i,(s-a)(m+n)+j}[-u]$.
Let us compute the each terms of \eqref{no7}. First, we compute the first term of \eqref{no7}. By \eqref{nom} and \eqref{non}, we have
\begin{align}
&\quad(W^{(1)}_{i,i+1})_{(0)}(Z_{(0)})^{r}W^{(1)}_{i+1,i}\nonumber\\
&=\displaystyle\sum_{1\leq t\leq l}\limits e_{(t-1)(m+n)+i+1,(t-r-1)(m+n)+i+1}[-1]\nonumber\\
&\qquad\qquad\qquad-{(-1)}^{p(e_{i,i+1})}\displaystyle\sum_{1\leq t\leq l}\limits e_{(t-1)(m+n)+i,(t-r-1)(m+n)+i}[-1].\label{no8}
\end{align}
Applying \eqref{no8} to the first term of \eqref{no7}, we obtain
\begin{align*}
&\quad\text{the first term of \eqref{no7}}\\
&=(\displaystyle\sum_{1\leq t\leq l}\limits e_{s(m+n)+i,(s-1)(m+n)+i}[-1])_{(1)}(\text{the right hand side of \eqref{no8}})\\
&=0
\end{align*}
since $\kappa(e_{s(m+n)+j,(s-1)n+j},e_{(t-1)(m+n)+i,(t-r-1)(m+n)+i})=0$.
Next, let us compute the second term of \eqref{no7}. By \eqref{no8}, it is the sum of
\begin{align}
&{(-1)}^{p(i)}(\displaystyle\sum_{\substack{r_1<r_2\\1\leq u\leq m+n}}\limits e^{(r_1)}_{u,i}[-1]e^{(r_2)}_{i,u}[-1])_{(1)}(\displaystyle\sum_{1\leq t\leq l}\limits e_{(t-1)(m+n)+i+1,(t-r-1)(m+n)+i+1}[-1])\nonumber\\
&\qquad-{(-1)}^{p(i+1)}(\displaystyle\sum_{\substack{r_1<r_2\\1\leq u\leq m+n}}\limits e^{(r_1)}_{u,i}[-1]e^{(r_2)}_{i,u}[-1])_{(1)}(\displaystyle\sum_{1\leq t\leq l}\limits e_{(t-1)(m+n)+i,(t-r-1)(m+n)+i}[-1])\label{no9}
\end{align}
and
\begin{align}
&(\alpha\sum_{2\leq s\leq l} (s-1)e^{(s)}_{i,i}[-2])_{(1)}\displaystyle\sum_{1\leq t\leq l}\limits e_{(t-1)(m+n)+i+1,(t-r-1)(m+n)+i+1}[-1]\nonumber\\
&\qquad-{(-1)}^{p(e_{i,i+1})}(\alpha\displaystyle\sum_{2\leq s\leq l}\limits (s-1)e^{(s)}_{i,i}[-2])_{(1)}\displaystyle\sum_{1\leq t\leq l}\limits e_{(t-1)(m+n)+i,(t-r-1)(m+n)+i}[-1]).\label{no10}
\end{align}
Let us compute \eqref{no9} and \eqref{no10}. By direct computation, the second term of \eqref{no9} is equal to
\begin{align*}
&-{(-1)}^{p(i+1)}\sum_{\substack{r_1<r_2\\1\leq u\leq m+n\\1\leq t\leq l}}[e^{(r_1)}_{u,i},[e^{(r_2)}_{i,u},e_{(t-1)(m+n)+i,(t-r-1)(m+n)+i}]][-1]\\
&\quad-{(-1)}^{p(i+1)}\sum_{\substack{r_1<r_2\\1\leq u\leq m+n\\1\leq t\leq l}}\kappa(e^{(r_2)}_{i,u},e_{(t-1)(m+n)+i,(t-r-1)(m+n)+i})e^{(r_1)}_{u,i}[-1]\\
&\qquad-{(-1)}^{p(i+1)}\sum_{\substack{r_1<r_2\\1\leq u\leq m+n\\1\leq t\leq l}}{(-1)}^{p(e_{i,u})}\kappa(e^{(r_1)}_{u,i},e_{(t-1)(m+n)+i,(t-r-1)(m+n)+i})e^{(r_2)}_{i,u}[-1]\\
&=-{(-1)}^{p(i+1)}\sum_{1\leq t\leq l}e_{(t-1)(m+n)+i,(t-r-1)(m+n)+i}[-1]+0+0.
\end{align*}
By the similar computation, the first term of \eqref{no9} is equal to 
\begin{equation*}
{(-1)}^{p(i+1)}\sum_{1\leq t\leq l}e_{(t-1)(m+n)+i,(t-r-1)(m+n)+i}[-1].
\end{equation*}
By direct computation, we rewrite the second term of \eqref{no10} as
\begin{align*}
&\quad{(-1)}^{p(e_{i,i+1})}\alpha\displaystyle\sum_{\substack{1\leq s\leq l\\1\leq t\leq l}}\limits (s-1)[e^{(s)}_{i,i}, e_{(t-1)(m+n)+i,(t-r-1)(m+n)+i}][-1]\\
&={(-1)}^{p(e_{i,i+1})}r\alpha\sum_{1\leq t\leq l}e_{(t-1)(m+n)+i,(t-r-1)(m+n)+i}[-1].
\end{align*}
By the similar computation, we find that the first term of \eqref{no10} is zero. Thus, we obtain
\begin{align}
\text{the sum of first two terms of \eqref{no7}}={(-1)}^{p(e_{i,i+1})}r\alpha\sum_{1\leq t\leq l}e_{(t-1)(m+n)+i,(t-r-1)(m+n)+i}[-1].\label{no2021}
\end{align}
Finally, we compute the third term of \eqref{no7}. Since the relation $(\displaystyle\sum_{1\leq s\leq l}\limits(s-1)e^{(s)}_{i,i}[-2])_{(0)}=0$ holds, we can rewrite the third term of \eqref{no7} as
\begin{equation*}
\displaystyle\sum_{1\leq d\leq r}\limits Z_{(1)}(W^{(1)}_{i,i+1})_{(0)}(Z_{(0)})^{r-d}\cdot({(-1)}^{p(i)}\displaystyle\sum_{\substack{r_1<r_2\\1\leq t\leq m+n}}\limits e^{(r_1)}_{t,i}[-1]e^{(r_2)}_{i,t}[-1])_{(0)}(Z_{(0)})^{d-1}W^{(1)}_{i+1,i}.
\end{equation*}
Let us set
\begin{gather*}
T_d=Z_{(1)}(W^{(1)}_{i,i+1})_{(0)}(Z_{(0)})^{r-d},\quad B_d=({(-1)}^{p(i)}\displaystyle\sum_{\substack{r_1<r_2\\1\leq t\leq m+n}}\limits e^{(r_1)}_{t,i}[-1]e^{(r_2)}_{i,t}[-1])_{(0)}(Z_{(0)})^{d-1}W^{(1)}_{i+1,i}
\end{gather*}
Then, the third term of \eqref{no7} is equal to $\displaystyle\sum_{1\leq d\leq r}\limits T_d(B_d)$. 

We rewrite $B_d$ and $T_d$. By \eqref{nom} and \eqref{non}, $T_d$ is the sum of $T^1_d$ and $T^2_d$ such that
\begin{gather}
T_d^1=-\displaystyle\sum_{g=0}^{r-d}\limits \begin{pmatrix} r-d\\g\end{pmatrix}(Z_{(0)})^{r-d-g}(\displaystyle\sum_{1\leq s\leq l}\limits e_{(s+g)(m+n)+i+1,(s-1)(m+n)+i}[-1])_{(1)},\label{noj}\\
T_d^2=(W^{(1)}_{i,i+1})_{(0)}(Z_{(0)})^{r-d}Z_{(1)}.\label{noh}
\end{gather}
Since
\begin{align}
(Z_{(0)})^{d-1}W^{(1)}_{i+1,i}=\sum_{d\leq t\leq l} e_{(t-1)(m+n)+i,(t-d)(m+n)+i+1}[-1].\label{932}
\end{align}
by \eqref{nom} and \eqref{non}, $B_d$ is equal to
\begin{align}
&\quad({(-1)}^{p(i)}\sum_{\substack{1\leq r_1<r_2\leq l\\1\leq u\leq m+n}}e^{(r_1)}_{u,i}[-1]e^{(r_2)}_{i,u}[-1])_{(0)}\sum_{d\leq t\leq l} e_{(t-1)(m+n)+i,(t-d)(m+n)+i+1}[-1]\nonumber\\
&={(-1)}^{p(i)}\sum_{\substack{1\leq r_1<r_2\leq l\\1\leq u\leq m+n}}\sum_{d\leq t\leq l}e^{(r_1)}_{u,i}[-1][e^{(r_2)}_{i,u},e_{(t-1)(m+n)+i,(t-d)(m+n)+i+1}][-1]\nonumber\\
&\quad+\sum_{\substack{1\leq r_1<r_2\leq l\\1\leq u\leq m+n}}\sum_{d\leq t\leq l}{(-1)}^{p(u)}e^{(r_2)}_{i,u}[-1][e^{(r_1)}_{u,i},e_{(t-1)(m+n)+i,(t-d)(m+n)+i+1}][-1]\nonumber\\
&\quad+\sum_{\substack{1\leq r_1<r_2\leq l\\1\leq u\leq m+n}}\sum_{d\leq t\leq l}{(-1)}^{p(u)}\kappa(e^{(r_1)}_{u,i},e_{(t-1)(m+n)+i,(t-d)(m+n)+i+1})e^{(r_2)}_{i,u}[-2].\label{no4}
\end{align}
By direct computation, we find that the first term of the right hand side of \eqref{no4} is equal to
\begin{align}
{(-1)}^{p(i)}\sum_{d\leq r_1<t\leq l} e^{(r_1)}_{i,i}[-1]e_{(t-1)(m+n)+i,(t-d)(m+n)+i+1}[-1]\label{no5}
\end{align}
and the second term of the right hand side of \eqref{no4} is equal to
\begin{align}
&\sum_{\substack{d\leq t<r_2\leq l\\1\leq u\leq m+n}}{(-1)}^{p(u)} e_{i,u}^{(r_2)}[-1]e_{(t-1)(m+n)+u,(t-d)(m+n)+i+1}[-1]\nonumber\\
&\qquad-{(-1)}^{p(i+1)}\sum _{t-d+1<r_2}e_{i,i+1}^{(r_2)}[-1]e_{(t-1)(m+n)+i,(t-d)(m+n)+i}[-1].\label{no6}
\end{align}
By the definition of $\kappa$, the third term of the right hand side of \eqref{no4} is equal to
\begin{align}
\delta_{d,1}\alpha\displaystyle\sum_{1\leq r_2\leq l}(r_2-1)e^{(r_2)}_{i,i+1}[-2].\label{no973}
\end{align}
Adding \eqref{no5}, \eqref{no6}, and \eqref{no973}, we obtain
\begin{align}
B_d&={(-1)}^{p(i)}\sum_{d\leq r_1<t\leq l} e^{(r_1)}_{i,i}[-1]e_{(t-1)(m+n)+i,(t-d)(m+n)+i+1}[-1]\nonumber\\
&\quad+\sum_{\substack{d\leq t<r_2\leq l\\1\leq u\leq m+n}}{(-1)}^{p(u)} e_{i,u}^{(r_2)}[-1]e_{(t-1)(m+n)+u,(t-d)(m+n)+i+1}[-1]\nonumber\\
&\quad-{(-1)}^{p(i)}\sum _{t-d+1<r_2\leq l}e_{i,i+1}^{(r_2)}[-1]e_{(t-1)(m+n)+i,(t-d)(m+n)+i}[-1]\nonumber\\
&\quad+\delta_{d,1}\alpha\displaystyle\sum_{1\leq r_2\leq l}(r_2-1)e^{(r_2)}_{i,i+1}[-2]\nonumber\\
&={(-1)}^{p(i)}\sum_{r_1\neq t} e^{(r_1)}_{i,i}[-1]e_{(t-1)(m+n)+i,(t-d)(m+n)+i+1}[-1])\nonumber\\
&\quad+\sum_{\substack{r_2>t\\u\neq i}}{(-1)}^{p(u)} e_{i,u}^{(r_2)}[-1]e_{(t-1)(m+n)+u,(t-d)(m+n)+i+1}[-1]\nonumber\\
&\quad-{(-1)}^{p(i)}\sum _{t-d+1<r_2\leq l}e_{i,i+1}^{(r_2)}[-1]e_{(t-1)(m+n)+i,(t-d)(m+n)+i}[-1]\nonumber\\
&\quad+\delta_{d,1}\alpha\displaystyle\sum_{1\leq r_2\leq l}(r_2-1)e^{(r_2)}_{i,i+1}[-2].\label{noi}
\end{align}
Now, we compute $T_d(B_d)$. We divide $B_d$ into two parts such that
\begin{align*}
B^1_d&={(-1)}^{p(i)}\sum_{r_1\neq t} e^{(r_1)}_{i,i}[-1]e_{(t-1)(m+n)+i,(t-d)(m+n)+i+1}[-1]\nonumber\\
&\quad+\sum_{\substack{r_2>t\\u\neq i}}{(-1)}^{p(u)} e_{i,u}^{(r_2)}[-1]e_{(t-1)(m+n)+u,(t-d)(m+n)+i+1}[-1]\nonumber\\
&\quad-{(-1)}^{p(i)}\sum _{t-d+1<r_2\leq l}e_{i,i+1}^{(r_2)}[-1]e_{(t-1)(m+n)+i,(t-d)(m+n)+i}[-1],\\
B^2_d&=\delta_{d,1}\alpha\displaystyle\sum_{1\leq r_2\leq l}(r_2-1)e^{(r_2)}_{i,i+1}[-2].
\end{align*}
First, let us compute $T_d(B^2_d)$. By \eqref{nom} and \eqref{non}, we obtain
\begin{align}
T_d(B^2_d)=-\delta_{d,1}{(-1)}^{p(e_{i,i+1})}(r-1)\alpha\sum_{1\leq t\leq l}e_{(t-1)(m+n)+i,(t-r-1)(m+n)+i}[-1].\label{tukare}
\end{align}
Next, let us compute $T_d(B^1_d) =T^1_d(B^1_d)+T^2_d(B^1_d)$. We compute $T^1_d(B_d^1)$ and $T^2_d(B^1_d)$ respectively.
In order to compute $T_d^1(B_d^1)$, we prepare the following three relations;
\begin{align}
&\quad\sum_{1\leq s\leq l}(e_{(s+g)(m+n)+i+1,(s-1)(m+n)+i}[-1])_{(1)}\nonumber\\
&\qquad\qquad\cdot({(-1)}^{p(i)}\sum_{r_1\neq t} e^{(r_1)}_{i,i}[-1]e_{(t-1)(m+n)+i,(t-d)(m+n)+i+1}[-1])\nonumber\\
&=-{(-1)}^{p(i+1)}\sum_{1\leq t\leq l}e_{(t-1)(m+n)+i,(t-g-d-1)(m+n)+i}[-1],\label{no20}\\
&\quad\sum_{1\leq s\leq l}(e_{(s+g)(m+n)+i+1,(s-1)(m+n)+i}[-1])_{(1)}\nonumber\\
&\qquad\qquad\cdot(\sum_{\substack{r_2>t\\u\neq i}} {(-1)}^{p(u)}e_{i,u}^{(r_2)}[-1]e_{(t-1)(m+n)+u,(t-d)(m+n)+i+1}[-1])=0,\label{no21}\\
&\quad\sum_{1\leq s\leq l}(e_{(s+g)(m+n)+i+1,(s-1)(m+n)+i}[-1])_{(1)}\nonumber\\
&\qquad\qquad\cdot({(-1)}^{p(i)}\sum _{t-d+1<r_2\leq l}e_{i,i+1}^{(r_2)}[-1]e_{(t-1)(m+n)+i,(t-d)(m+n)+i}[-1])\nonumber\\
&=-{(-1)}^{p(i+1)}\sum_{1\leq t\leq l}e_{(t-1)(m+n)+i,(t-g-d-1)(m+n)+i}[-1].\label{no22}
\end{align}
We only show the relation \eqref{no22} holds. The other relations are proven similarly.
By direct computation, \eqref{no22} is equal to
\begin{align*}
&\quad{(-1)}^{p(i)}\displaystyle\sum_{1\leq s\leq l}\limits\displaystyle\sum _{t-d+1<r_2\leq l}\limits[[e_{(s+g)(m+n)+i+1,(s-1)(m+n)+i},e_{i,i+1}^{(r_2)}],e_{(t-1)n+i,(t-d)n+i})][-1]\\
&=-{(-1)}^{p(i+1)}\sum_{1\leq t\leq l}e_{(t-1)(m+n)+i,(t-g-d-1)(m+n)+i}[-1].
\end{align*}
Thus, we have obtained \eqref{no22}. By \eqref{no20}-\eqref{no22} and \eqref{noj}, we find the relation
\begin{equation}
T_d^1(B^1_d)=0.\label{nikui}
\end{equation}
Similarly to \eqref{no20}-\eqref{no22}, we obtain the following three equations;
\begin{align}
&\sum_{1\leq s\leq l}(e_{s(m+n)+i,(s-1)(m+n)+i}[-1])_{(1)}\nonumber\\
&\qquad\qquad\cdot({(-1)}^{p(i)}\sum_{r_1\neq t} e^{(r_1)}_{i,i}[-1]e_{(t-1)(m+n)+i,(t-d)(m+n)+i+1}[-1])\nonumber\\
&=-{(-1)}^{p(i)}\sum_{1\leq t\leq l}e_{(t-1)(m+n)+i,(t-d-1)(m+n)+i+1}[-1],\label{no23}\\
&\displaystyle\sum_{1\leq s\leq l}(e_{s(m+n)+i,(s-1)(m+n)+i}[-1])_{(1)}\nonumber\\
&\qquad\qquad\cdot(\sum_{\substack{r_2>t\\u\neq i}} {(-1)}^{p(u)}e_{i,u}^{(r_2)}[-1]e_{(t-1)(m+n)+u,(t-d)(m+n)+i+1}[-1])=0,\label{no24}\\
&\displaystyle\sum_{1\leq s\leq l}(e_{s(m+n)+i,(s-1)(m+n)+i}[-1])_{(1)}\nonumber\\
&\qquad\qquad\cdot({(-1)}^{p(i)}\sum _{t-d+1<r_2\leq l}e_{i,i+1}^{(r_2)}[-1]e_{(t-1)(m+n)+i,(t-d)(m+n)+i}[-1])=0.\label{no25}
\end{align}
By \eqref{no23}-\eqref{no25} and \eqref{noh}, we obtain
\begin{align}
T_d^2(B^1_d)&=-{(-1)}^{p(i)}(W^{(1)}_{i,i+1})_{(0)}(Z_{(0)})^{r-d}\sum_{1\leq t\leq l}e_{(t-1)(m+n)+i,(t-d-1)(m+n)+i+1}[-1]\nonumber\\
&=-{(-1)}^{p(i)}\sum_{1\leq t\leq l}e_{(t-1)(m+n)+i+1,(t-r-1)(m+n)+i+1}[-1]\nonumber\\
&\quad+{(-1)}^{p(i+1)}\sum_{1\leq t\leq l}e_{(t-1)(m+n)+i,(t-r-1)(m+n)+i}[-1],\label{nami}
\end{align}
where the second equality is due to \eqref{nom} and \eqref{non}. By \eqref{tukare}, \eqref{nikui} and \eqref{nami}, we have
\begin{align}
\displaystyle\sum_{1\leq d\leq r}\limits T_d(B_d)&=-{(-1)}^{p(e_{i,i+1})}(r-1)\alpha\sum_{1\leq t\leq l}e_{(t-1)(m+n)+i,(t-r-1)(m+n)+i}[-1]\nonumber\\
&\quad-{(-1)}^{p(i)}r\sum_{1\leq t\leq l}e_{(t-1)(m+n)+i+1,(t-r-1)(m+n)+i+1}[-1]\nonumber\\
&\quad+{(-1)}^{p(i+1)}r\sum_{1\leq t\leq l}e_{(t-1)(m+n)+i,(t-r-1)(m+n)+i}[-1].\label{2022}
\end{align}
Adding \eqref{no2021} and \eqref{2022}, \eqref{no7} is equal to 
\begin{align*}
&{(-1)}^{p(e_{i,i+1})}\alpha\displaystyle\sum_{1\leq t\leq l}\limits e_{(t-1)(m+n)+i,(t-r-1)(m+n)+i}[-1]\\
&\quad-{(-1)}^{p(i+1)}r\sum_{1\leq t\leq l}e_{(t-1)(m+n)+i,(t-r-1)(m+n)+i}[-1]\nonumber\\
&\quad+{(-1)}^{p(i)}r\displaystyle\sum_{1\leq t\leq l}\limits e_{(t-1)(m+n)+i+1,(t-r-1)(m+n)+i+1}[-1]+\text{higher terms}.
\end{align*}
We have obtained \eqref{929}.
\end{proof}
Since we complete the proof of Claims~\ref{T1} and \ref{T3}, we have proved Theorem~\ref{Tinf}.

\end{appendices}

\section*{Acknowledgement}
The author wishes to express his gratitude to his supervisor Tomoyuki Arakawa for suggesting lots of advice to improve this paper. We express our sincere thanks to Ryosuke Kodera for carefully reading the manuscript and giving me lots of advice and comments. The author is particularly grateful for the assistance given by Naoki Genra and Shigenori Nakatsuka. This work was supported by Iwadare Scholarship and and JSPS KAKENHI, Grant-in-Aid for JSPS Fellows, Grant Number JP20J12072. 
\section*{Data Availability}
The authors confirm that the data supporting the findings of this study are available within the article and its supplementary materials.
\bibliographystyle{plain}
\bibliography{syuu}
\end{document}